\documentclass[a4paper,11pt]{article}

\usepackage[left=2.7cm,right=2.7cm,top=3.5cm,bottom=3cm]{geometry}

\usepackage{amsmath,amssymb,amsthm,amscd,amsfonts}
\usepackage{latexsym}
\usepackage{graphicx}
\usepackage[all]{xy}
\usepackage{color}
\usepackage[T1]{fontenc}
\usepackage{authblk}

\newcommand{\Z}{{\mathbb Z}}
\newcommand{\Q}{\mathbb Q}

\newcommand{\E}{{\mathcal{E}}}

\newtheorem{theorem}{Theorem}[section]
\newtheorem{lemma}[theorem]{Lemma}
\newtheorem{corollary}[theorem]{Corollary}
\newtheorem{proposition}[theorem]{Proposition}

\theoremstyle{definition}
\newtheorem{definition}[theorem]{Definition}
\newtheorem{remark}[theorem]{Remark}
\newtheorem{example}[theorem]{Example}

\newcommand{\ds}{\displaystyle}

\newcommand{\z}{{\zeta}}
\newcommand{\ov}{\overline}

\DeclareMathOperator{\Gal}{\rm Gal}
\DeclareMathOperator{\Id}{\rm Id}
\DeclareMathOperator{\GL}{{\rm GL}}
\DeclareMathOperator{\SL}{{\rm SL}}

\newcommand{\s}{\sigma}

\renewcommand{\b}{\beta}
\renewcommand{\c}{\gamma}
\renewcommand{\d}{\delta}
\newcommand{\D}{\Delta}
\renewcommand{\leq}{\leqslant}
\renewcommand{\geq}{\geqslant}

\title{Fields generated by torsion points of elliptic curves}

\author[1]{Andrea Bandini}
\affil[1]{Dipartimento di Matematica e Informatica, Universit\`a degli Studi di Parma\\
Parco Area delle Scienze, 53/A - 43124 Parma, Italy, e-mail: andrea.bandini@unipr.it}

\author[2]{Laura Paladino\thanks{L. Paladino is partially supported by Istituto Nazionale di Alta Matematica,
grant research \emph{Assegno di ricerca Ing. G. Schirillo}, and partially supported by the European Commission
and by Calabria Region through the European Social Fund.}}
\affil[2]{Dipartimento di Matematica, Universit\`a di Pisa\\
Largo Bruno Pontecorvo, 5 - 56127 Pisa, Italy, e-mail: paladino@mail.dm.unipi.it}

\date{ }

%\subjclass[2010]{11G05; 11F80}

\begin{document}

\maketitle

Keywords: elliptic curves; torsion points; Galois representations

\bigskip

Mathematics subject classification: 11G05; 11F80

\begin{abstract}
Let $K$ be a field and let $\E$ be an elliptic curve defined over $K$.
Let $m$ be a positive integer, prime with ${\rm char}(K)$ if ${\rm char}(K)\neq 0$;
we denote by $\E[m]$ the $m$-torsion subgroup of $\E$ and
by $K_m:=K(\E[m])$ the field obtained by adding to $K$ the coordinates
of the points of $\E[m]$. Let $P_i:=(x_i,y_i)$ ($i=1,2$) be a $\Z$-basis for $\E[m]$; then
$K_m=K(x_1,y_1,x_2,y_2)$. We look for small sets of generators for $K_m$ inside
$\{x_1,y_1,x_2,y_2,\z_m\}$ trying to emphasize the role of $\z_m$ (a primitive $m$-th root of unity).
In particular, we prove that $K_m=K(x_1,\z_m,y_2)$, for any odd $m\geq 5$.
When $m$ is prime and $K$ is a number field we prove that the generating set $\{x_1,\z_m,y_2\}$ is often minimal.
We also describe explicit generators, degree and Galois groups of
the extensions $K_m/K$ for $m=3$ and $m=4$, when ${\rm char}(K)\neq 2,3$.
\end{abstract}

\maketitle

\section{Introduction} \label{sec1}
Let $K$ be a field of any characteristic and
let $\E$ be an elliptic curve defined over $K$. Let $m$ be a positive integer, prime with
${\rm char}(K)$ if ${\rm char}(K)\neq 0$. We denote by $\E[m]$ the $m$-torsion subgroup of $\E$ and by $K_m:=K(\E[m])$
the field generated by the points of $\E[m]$, i.e. the field obtained by adding to $K$ the coordinates
of the $m$-torsion points of $\E$. As usual, for any point $P\in \E$, we let $x(P)$, $y(P)$ be its coordinates
and we indicate its $m$-th multiple simply by $mP$. We denote by $\{P_1\,,P_2\}$ a $\Z$-basis for $\E[m]$;
then $K_m=K(x(P_1),x(P_2),y(P_1),y(P_2))$. To ease notation, we put $x_i:=x(P_i)$ and $y_i:=y(P_i)$ ($i=1,2$).
By Artin's primitive element theorem the extension $K_m/K$ is monogeneous and one can find a unique generator
for $K_m/K$ by combining the above coordinates. On the other hand, by the properties of the Weil pairing $e_m$,
we have that $e_m(P_1,P_2)\in K_m$ is a primitive $m$-th root of unity (we denote it by $\z_m$). We want to
emphasize the importance of $\zeta_m$ as a generator of $K_m/K$ and look for minimal (i.e., with the smallest
number of elements) sets of generators contained in $\{x_1,x_2,y_1,y_2,\z_m\}$. This kind of information is useful
for describing the fields in terms of degrees and Galois groups, as we shall explicitly show
for $m=3$ and $m=4$, when ${\rm char}(K)\neq 2,3$. Other applications are local-global problems (see, e.g., \cite{DZ1}
or the particular cases of \cite{Pal} and \cite{Pal2}), descent problems (see, e.g., \cite{SS} and the references there
or, for a particular case, \cite{Ba} and \cite{Ba2}), Galois representations,
points on modular curves (see Section \ref{SecGal}) and points on Shimura curves.

It is easy to prove that $K_m= K(x_1,x_2,\z_m,y_1)$ (see Lemma \ref{zetam})
and we expected a close similarity between the roles of the
$x$-coordinates and $y$-coordinates; this turned out to be true in relevant cases. Indeed in Section \ref{MinSet}
(mainly by analysing the possible elements of the Galois group $\Gal(K_m/K)\,$) we prove that $K_m=K(x_1,\z_m,y_1,y_2)$
at least for odd $m\geq 5$. This leads to the following (for more precise and general statements see
Theorems \ref{dihedral}, \ref{ordinates1} and \ref{ordinates1++})

\begin{theorem}\label{IntroThm1}
If $m\geqslant 3$, then $K_m=K(x_1+x_2,x_1x_2,\z_m,y_1)$. Moreover if $m\geqslant 4$, then
\[ K_m=K(x_1,\z_m,y_1,y_2) \Longrightarrow K_m=K(x_1,\z_m,y_2)\ .\]
In particular $K_m=K(x_1,\z_m,y_2)$ for any odd integer $m\geq 5$.
\end{theorem}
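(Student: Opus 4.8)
My plan is to recast all three equalities as the statement that a certain subgroup of $\Gal(K_m/K)$ is trivial, detecting its elements through the faithful representation $\rho\colon\Gal(K_m/K)\to\GL_2(\Z/m\Z)$ attached to the basis $P_1,P_2$, where $\s P_1=aP_1+cP_2$ and $\s P_2=bP_1+dP_2$. Two inputs drive everything: the Weil-pairing identity $\s(\z_m)=\z_m^{\det\rho(\s)}$, so that $\s$ fixes $\z_m$ exactly when $\det\rho(\s)\equiv1$; and Lemma \ref{zetam}, giving $K_m=K(x_1,x_2,\z_m,y_1)$. Throughout I will use that $x(\s P)=x(P)$ forces $\s P=\pm P$, and that $y(-P)\neq y(P)$ whenever $P$ has order $>2$ — both valid in every admissible characteristic.

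For the first equality I would take $\s$ fixing $K(x_1+x_2,x_1x_2,\z_m,y_1)$. Fixing the two symmetric functions forces $\s$ to permute the roots $\{x_1,x_2\}$ of a fixed quadratic, and these are distinct since $m\geq3$ gives $P_1\neq\pm P_2$. If $\s$ fixes both $x_1,x_2$, then it fixes $x_1,x_2,\z_m,y_1$ and Lemma \ref{zetam} gives $\s=\Id$. If instead $\s$ swaps them, then $\s P_1=\pm P_2$ and $\s P_2=\pm P_1$, so $\rho(\s)$ is antidiagonal, and imposing $\det\rho(\s)=1$ forces $\rho(\s)^2=-I$. Then $\s^2$ acts as $-1$ on $\E[m]$ and sends $y_1=y(P_1)$ to $y(-P_1)\neq y_1$; but $\s^2$ lies in the same subgroup and must fix $y_1$ — a contradiction. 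So only the first alternative survives.

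For the implication I would assume $K_m=K(x_1,\z_m,y_1,y_2)$, set $L'=K(x_1,\z_m,y_2)$, and note that $y_1$ is quadratic over $K(x_1)\subseteq L'$ with $K_m=L'(y_1)$, so $[K_m:L']\leq2$; it remains to exclude equality to $2$. A nontrivial $\s\in\Gal(K_m/L')$ would move $y_1$ to its conjugate $y(-P_1)$, so (as $\s$ fixes $x_1$) $\s P_1=-P_1$; with $\det\rho(\s)=1$ this gives $\rho(\s)=\left(\begin{smallmatrix}-1&b\\0&-1\end{smallmatrix}\right)$, and $\s^2=\Id$ forces $2b\equiv0\pmod m$. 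If $\s P_2=-P_2$ (automatic for odd $m$, where $b\equiv0$) then $\s(y_2)=y(-P_2)\neq y_2$, a contradiction. The remaining case, $m$ even with $b=m/2$ and $\s P_2=T-P_2$ for the nonzero $2$-torsion point $T=(m/2)P_1$, is the step I expect to be the crux: the convenient implication ``$\s$ fixes $y_2\Rightarrow\s P_2=\pm P_2$'' fails, since $\s P_2$ may be a genuinely different point sharing the ordinate $y_2$. Working in the model $y^2=f(x)$ (legitimate because even $m$ forces $\mathrm{char}(K)\neq2$), the distinct points $P_2$ and $\s P_2$ then lie on the horizontal line $y=y_2$, whose third intersection is $C=-(P_2+\s P_2)=-bP_1$; since $C$ lies on that line $y(C)=y_2$, whereas $\s(C)=-C$ forces $y(C)=-y(C)$, giving $y_2=0$, impossible for $P_2$ of order $\geq4$. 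Hence $[K_m:L']=1$.

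Finally the ``in particular'' clause is immediate: for odd $m\geq5$ the hypothesis $K_m=K(x_1,\z_m,y_1,y_2)$ is exactly what the results announced in the introduction (Theorems \ref{ordinates1}, \ref{ordinates1++}) supply, and since $m\geq5\geq4$ the implication just established yields $K_m=K(x_1,\z_m,y_2)$.
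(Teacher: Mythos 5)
Your proposal is correct, and its three clauses fare differently against the paper. For the first equality you take a genuinely different route: the paper's Theorem \ref{dihedral} embeds $G=\Gal(K_m/K(x_1+x_2,x_1x_2))$ into the dihedral group $D_4=\langle\tau_1,\tau_2\rangle$ and runs a case analysis over $[K_m:K(x_1,x_2)]\in\{1,2,4\}$ following Lemma \ref{casi1}, whereas you test a single $\s$ fixing all four proposed generators: in the swap case $\rho(\s)$ is antidiagonal, the Weil-pairing condition $\det\rho(\s)=1$ pins it to $\pm\left(\begin{smallmatrix}0&1\\-1&0\end{smallmatrix}\right)$ (using $m\geq 3$ so that $-1\not\equiv 1$), and then $\s^2=-\Id$ must fix $y_1$ while sending it to $y(-P_1)\neq y_1$, since $P_1$ is not $2$-torsion. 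This is shorter and uniform in all admissible characteristics; what the paper's longer analysis buys is the explicit structure of $G$ inside $D_4$, which your shortcut discards but which the paper exploits (e.g., identifying $\Gal(K_m/K(x_1+x_2,x_1x_2,\z_m))=\langle\tau_2\rangle$ in the degree-$8$ case). For the implication at $m\geq 4$ your argument coincides with the paper's proof of Theorem \ref{ordinates1++} down to the crux: $2b\equiv 0\pmod m$, and the case $b=m/2$ is killed because the horizontal line $y=y_2$ through the distinct points $P_2$ and $\s P_2$ has third intersection $-(m/2)P_1$, a nonzero $2$-torsion point, forcing $y_2=0$, impossible for a point of order $m\geq 4$. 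Your handling of the subcase $\s=-\Id$ is in fact more direct than the paper's, which detours through Theorem \ref{zetaversusy}: your observation that $-\Id$ moves $y_2$ (valid in every admissible characteristic, since $y(-P)\neq y(P)$ for non-$2$-torsion $P$) suffices.

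One caveat, not a flaw relative to the paper's own structure: for the final clause you import Theorem \ref{ordinates1} wholesale to get the hypothesis $K_m=K(x_1,\z_m,y_1,y_2)$ for odd $m\geq 5$. That is exactly how the paper assembles Theorem \ref{IntroThm1}, so the logic is sound, but this hypothesis is the one genuinely nontrivial input you have not verified: its proof (writing $\s(P_2)=\alpha P_1+P_2$ after the Weil-pairing step, bounding the order of $\s$ by $3$ via the cubic satisfied by $x_2$ over $K(y_2)$, and excluding order $3$ by another collinearity argument on the line $y=y_2$) is independent of everything you wrote, so your proposal is complete only modulo that cited result.
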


Note that, by Theorem \ref{IntroThm1}, we have $K_p=K(x_1,\z_p,y_2)$, for any prime $p\geq 5$.
The set $\{x_1,\z_p,y_2\}$ seems a good candidate (in general) for a minimal set of generators
for $K_p/K$. Indeed, when $K$ is a number field and $\E$ has no complex multiplication, by Serre's open image theorem
(see, e.g., \cite[Appendix C, Theorem 19.1]{Sil}), we expect that the natural representation
\[ \rho_{\E,p}:\Gal(\ov{K}/K) \rightarrow  \GL_2(\Z/p\Z) \]
provides an isomorphism $\Gal(K_p/K) \simeq \GL_2(\Z/p\Z)$ for almost all primes $p$,
and there are hypotheses on $x_1$, $\z_m$ and $y_2$ (see Theorem \ref{GalGL}) which guarantee that
\[ [K(x_1,\z_m,y_2):K]=(p^2-1)(p^2-p)=| \GL_2(\Z/p\Z)| \ .\]
For (almost all) the {\em exceptional primes} for which $|\Gal(K_p/K)|<|\GL_2(\Z/p\Z)|$ (see Definition \ref{ExcPrimes}),
we employ some well known results on Galois representations and on subgroups of $\GL_2(\Z/p\Z)$ to reduce further the set of generators.
Joining the results of Lemma \ref{LVLemma}, Theorem \ref{Borel} and Theorem \ref{Cartan} we obtain

\begin{theorem}\label{IntroThm2}
Let $K$ be a number field. Assume that $p\geq 53$ is unramified in $K/\Q$ and exceptional for the curve $\E$. Then \begin{itemize}
\item[{\bf 1.}] $p\equiv 2\pmod 3 \Longrightarrow K_p=K(\z_p,y_2)$;
\item[{\bf 2.}] $p\equiv 1\pmod 3 \Longrightarrow [K_p:K(\z_p,y_2)]$ is $1$ or $3$.
\end{itemize}
\end{theorem}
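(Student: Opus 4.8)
The plan is to reduce the statement to a computation inside the image of the mod-$p$ representation and then feed in the classification of subgroups of $\GL_2(\Z/p\Z)$. By Theorem \ref{IntroThm1} we already know $K_p=K(x_1,\z_p,y_2)$, so the whole content is the single degree $[K_p:K(\z_p,y_2)]$. First I would fix the basis $\{P_1,P_2\}$ and identify $\Gal(K_p/K)$ with its image $G:=\rho_{\E,p}(\Gal(\ov K/K))\leq\GL_2(\Z/p\Z)$. Since $p$ is unramified in $K/\Q$ while $\Q(\z_p)/\Q$ is totally ramified at $p$, we get $K\cap\Q(\z_p)=\Q$, the mod-$p$ cyclotomic character $\det\circ\rho_{\E,p}$ is surjective, and by the Weil pairing $\Gal(K_p/K(\z_p))=G\cap\SL_2(\Z/p\Z)$ is exactly the set of $\sigma$ with $\det\rho_{\E,p}(\sigma)=1$. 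Writing $H:=\Gal(K_p/K(\z_p,y_2))$, I would record
\[ H=\{\sigma\in G:\ \det\rho_{\E,p}(\sigma)=1\ \text{and}\ y(\sigma P_2)=y(P_2)\}, \]
so that the assertions to be proved are $|H|=1$ for $p\equiv 2\pmod 3$ and $|H|\in\{1,3\}$ for $p\equiv 1\pmod 3$.

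The second step is to factor the degree through the field $K(\z_p,P_2)=K(\z_p,x_2,y_2)$:
\[ [K_p:K(\z_p,y_2)]=[K_p:K(\z_p,x_2,y_2)]\cdot[K(\z_p,x_2,y_2):K(\z_p,y_2)]. \]
The right-hand factor is the degree of $x_2$ over $K(\z_p,y_2)$; since $x_2$ is a root of the cubic obtained by setting $y=y_2$ in a Weierstrass equation of $\E$, this factor is at most $3$. The left-hand factor equals $|\{\sigma\in G:\det\rho_{\E,p}(\sigma)=1,\ \sigma P_2=P_2\}|$, i.e.\ the order of the group of elements $\left(\begin{smallmatrix}1&0\\ c&1\end{smallmatrix}\right)$ lying in $G$ in the chosen basis. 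Here the exceptionality of $p$ enters: because $p\geq 53$ is unramified and exceptional, Lemma \ref{LVLemma} and the Dickson--Serre classification force $G$ into a Borel subgroup or the normalizer of a split or non-split Cartan subgroup, the subgroups with projective image $A_4,S_4,A_5$ being excluded for $p\geq 53$. Choosing $P_1$ to span the stable line in the reducible case, none of these subgroups contains a non-trivial such unipotent, so the left-hand factor is $1$ and $[K_p:K(\z_p,y_2)]\leq 3$.

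It remains to sharpen ``$\leq 3$'' to ``$1$ or $3$'' and to pin the value to $p$ modulo $3$, which is the job of Theorems \ref{Borel} and \ref{Cartan}. The point is that $H$ permutes the set $T$ of $p$-torsion points with $y$-coordinate $y(P_2)$; these lie among the three intersections $P_2,R,S$ of the line $y=y(P_2)$ with $\E$ (so $P_2+R+S=O$ and $|T|\in\{1,3\}$), and the right-hand factor above is the size of the $H$-orbit of $x_2$, which is at most $|T|$. A non-trivial contribution would produce $\sigma\in H$ acting as a $3$-cycle on $\{P_2,R,S\}$: a transposition is impossible, since an element of $\SL_2(\Z/p\Z)$ fixing a non-zero point of $\E[m]$ has both eigenvalues equal to $1$, hence is unipotent, and a unipotent involution is trivial when $p$ is odd. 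Thus the kernel of the action $H\to\mathrm{Sym}(T)$ is trivial and its image lies in the alternating part, so $|H|\mid 3$; any such $\sigma$ has order $3$ and its eigenvalues are primitive cube roots of unity. In the Borel and split-Cartan cases these eigenvalues lie in $\Z/p\Z$, forcing $\z_3\in\Z/p\Z$, i.e.\ $p\equiv 1\pmod 3$; Theorem \ref{Cartan} shows that in the non-split case ($p\equiv 2\pmod 3$) the conditions $\det=1$ and $y(\sigma P_2)=y(P_2)$ are incompatible for $\sigma\neq 1$. This yields exactly the dichotomy of the statement.

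The main obstacle is the non-split Cartan analysis in Theorem \ref{Cartan}: one must show that, although $\SL_2(\Z/p\Z)$ always contains order-$3$ elements (with eigenvalues in $\FF_{p^2}$ when $p\equiv 2\pmod 3$), none of them can simultaneously lie in a non-split Cartan normalizer, have determinant $1$, and fix the $y$-coordinate of $P_2$ -- a point where the curves with complex multiplication by an order of $\Q(\z_3)$ must be handled carefully, since there the geometric order-$3$ symmetry of $\{P_2,R,S\}$ need not be realised by an element of $G$. The other input to secure beforehand is that Lemma \ref{LVLemma} genuinely removes the exceptional images $A_4,S_4,A_5$ for every unramified $p\geq 53$, so that only the Borel and Cartan cases survive and Theorems \ref{Borel} and \ref{Cartan} can be combined.
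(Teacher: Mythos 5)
Your proposal retraces the paper's own route almost step for step: the reduction to $K_p=K(x_1,\z_p,y_2)$ via Theorem \ref{IntroThm1} (i.e.\ Theorem \ref{ordinates1++}), the elimination of the projectively exceptional images $A_4$, $S_4$, $A_5$ via Lemma \ref{LVLemma}, and then the Borel/Cartan analysis resting on the observation that any $\sigma\in\Gal(K_p/K(\z_p,y_2))$ permutes the three collinear points of $\E$ on the line $y=y_2$, so that a nontrivial $\sigma$ must act as a $3$-cycle, has characteristic polynomial $x^2+x+1$, and the congruence on $p$ enters through the solvability of $1+a+a^2=0$. Your packaging (factoring through $K(\z_p,x_2,y_2)$, the faithful action on the set $T$, the eigenvalue argument killing transpositions) is a cleaner rendering of the case-by-case computations in Theorems \ref{Borel} and \ref{Cartan}, and is in one respect more scrupulous than the paper: you distinguish split from non-split Cartan subgroups, which the paper never does.

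Precisely there lies the gap, and it is one you flag without closing. You assert that ``Theorem \ref{Cartan} shows that in the non-split case ($p\equiv 2\pmod 3$) the conditions $\det=1$ and $y(\sigma P_2)=y(P_2)$ are incompatible for $\sigma\neq 1$''. The paper shows no such thing: the proof of Theorem \ref{Cartan} writes every element of the Cartan as a diagonal matrix $\left(\begin{smallmatrix} a^{-1} & 0\\ 0 & a\end{smallmatrix}\right)$ with $a\in(\Z/p\Z)^*$, which silently assumes the Cartan is split --- no basis of $\E[p]$ diagonalizes a non-split Cartan over $\Z/p\Z$. In the non-split norm-one torus (cyclic of order $p+1$), elements of order $3$ and determinant $1$ exist exactly when $3\mid p+1$, i.e.\ when $p\equiv 2\pmod 3$, and for such a $\sigma$ the collinearity relation $(1+\sigma+\sigma^2)P_2=O$ is automatic by Cayley--Hamilton, so the computation that produces a contradiction in the split case yields none here. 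Worse, the CM configuration you mention is genuinely realized: for $\E:\,y^2=x^3+B$ over $K\supseteq\Q(\z_3)$ and $p\equiv 2\pmod 3$ (inert in $\Q(\z_3)$), the mod-$p$ image lies in (and, for $K=\Q(\z_3)$, equals) the non-split Cartan $(\Z[\z_3]/p)^*$ and contains the action of the automorphism $(x,y)\mapsto(\z_3 x,y)$ --- an element of order $3$ and determinant $\z_3^{p+1}=1$ that fixes the $y$-coordinate of \emph{every} $p$-torsion point. It therefore lies in $\Gal(K_p/K(\z_p,y_2))$, so the incompatibility you would need is false in general, and part {\bf 1} of the statement itself fails for such curves (only the dichotomy $[K_p:K(\z_p,y_2)]\in\{1,3\}$ survives, with the two congruence cases trading places in the non-split situation). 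In short: your instinct that the non-split case is the main obstacle is exactly right, but deferring it to Theorem \ref{Cartan} does not close it, because the paper's proof of that theorem covers only the split case --- a gap your proposal inherits rather than repairs.
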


In Subsection \ref{SecMod} we give just a hint of the possible applications to points of modular curves. Similar
applications, even to Shimura curves, can be further developed in the future. Modular curves
might provide a different approach (and more insight) to problems  analogous to those treated here.

The final sections are dedicated to the cases $m=3$ and $m=4$, when ${\rm char}(K)\neq 2,3$. We use the explicit formulas
for the coordinates of the torsion points to give more information on the extensions $K_3/K$ and $K_4/K$,
such as their degrees and their Galois groups.
\bigskip

\noindent {\bf Acknowledgement.}
The authors would like to express their gratitude to Antonella Perucca for suggesting the topic of a generalization
of the results of \cite{BP} and for providing several suggestions, comments and improvements on earlier drafts of
this paper. %\color{blue} The author thank an anonymous referee for having suggested to state the results of the
%first two sections for a general field $K$, instead of a number field, and to apply their results to modular curves
%and Shimura curves. \normalcolor

\section{The equality $K_m=K(x_1+x_2,x_1x_2,\z_m,y_1)$} \label{prelim}
As mentioned above, we consider a field $K$ of any characteristic and an elliptic curve $\E$ defined over $K$.
Throughout the paper we always assume that $m$ is an integer, $m\geqslant 2$ and, if ${\rm char}(K)\neq 0$, that
$m$ is prime with ${\rm char}(K)$. We choose two points $P_1=(x_1,y_1)$ and $P_2=(x_2,y_2)$
which form a $\Z$-basis of the $m$-torsion subgroup $\E[m]$ of $\E$.
We define $K_m:=K({\E}[m])$ and we denote by $K_{m,x}$ the extension of $K$ generated by the
$x$-coordinates of the points in $\E[m]$. So we have
\[ K(x_1,x_2)\subseteq K_{m,x}\subseteq K_m=K(x_1,x_2,y_1,y_2)\ .\]
Let $e_m: \E[m]\times \E[m] \longrightarrow {\boldsymbol\mu}_m$ be the Weil Pairing, where ${\boldsymbol\mu}_m$ is
the group of $m$-th roots of unity. By the properties of $e_m\,$,
we know that ${\boldsymbol\mu}_m\subset K_m$ and, once $P_1$ and $P_2$ are fixed, we put $e_m(P_1,P_2)=:\z_m$
(a primitive $m$-root of unity). We remark that the choice of $P_1$ and $P_2$ is arbitrary; we use this convention
for $\z_m\,$ (which obviously has no effect on the generated field since $K(\z_m)=K({\boldsymbol\mu}_m)$
for any primitive $m$-th root of unity) to simplify notations and computations.
In particular for any $\s\in \Gal(K_m/K)$, we have
\[ \s(\z_m)=\s(e_m(P_1,P_2))=e_m(P_1^\s,P_2^\s)=\z_m^{\det(\s)}, \]
where, for simplicity, we still use $\s$ to denote the matrix $\rho_{\E,m}(\s)\in \GL_2(\Z/m\Z)\,$.

The next lemma is rather obvious, but it shows how $\zeta_m$ can play the role of one of the $y$-coordinates
in generating $K_m$ and it will be useful in the rest of the paper.

\begin{lemma} \label{zetam}
We have $K_m= K(x_1,x_2,\z_m,y_1)$.
\end{lemma}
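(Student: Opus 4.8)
The plan is to show that once we know $x_1,x_2$ and $y_1$, the root of unity $\z_m$ suffices to recover the missing coordinate $y_2$, thereby proving $K(x_1,x_2,\z_m,y_1)=K_m$. The inclusion $K(x_1,x_2,\z_m,y_1)\subseteq K_m$ is immediate, since all four generators lie in $K_m$ (recall $\z_m=e_m(P_1,P_2)\in K_m$). So the real content is the reverse inclusion, for which it is enough to produce $y_2$ from the listed generators.

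First I would exploit the Weil pairing relation $\s(\z_m)=\z_m^{\det(\s)}$ recorded just before the statement. The key observation is that $\z_m$ ``sees'' the determinant of every $\s\in\Gal(K_m/K)$. Concretely, set $L:=K(x_1,x_2,\z_m,y_1)$ and consider any $\s\in\Gal(K_m/L)$. Such a $\s$ fixes $x_1,x_2,y_1$, hence fixes $P_1=(x_1,y_1)$ entirely and fixes the $x$-coordinate of $P_2$; since fixing $x(P_2)$ forces $P_2^\s=\pm P_2$, and fixing $P_1$ while sending $P_2\mapsto\pm P_2$ means $\s$ acts as the matrix $\left(\begin{smallmatrix}1&0\\0&\pm1\end{smallmatrix}\right)$ in the basis $\{P_1,P_2\}$. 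The determinant of this matrix is $\pm1$. But $\s$ also fixes $\z_m$ (as $\z_m\in L$), so $\z_m=\s(\z_m)=\z_m^{\det(\s)}$, forcing $\det(\s)\equiv 1\pmod m$. For $m\geq 3$ we have $-1\not\equiv 1\pmod m$, which rules out $\det(\s)=-1$ and leaves only the identity matrix. Hence $\s=\Id$, so $\Gal(K_m/L)$ is trivial and $L=K_m$.

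The main obstacle — really the only subtle point — is justifying that an automorphism fixing $x(P_2)$ must send $P_2$ to $\pm P_2$. This is standard: the $x$-coordinate is invariant exactly under the hyperelliptic involution $P\mapsto -P$, so two $m$-torsion points sharing an $x$-coordinate differ by this involution; applying $\s$ to $P_2=(x_2,y_2)$ gives a point $P_2^\s$ with the same $x$-coordinate $x_2$ (since $\s$ fixes $x_2\in L$), whence $P_2^\s\in\{P_2,-P_2\}$. Combined with $P_1^\s=P_1$, this pins down the matrix form of $\s$ and its determinant. One should note the hypothesis $m\geq 3$ (equivalently $-1\not\equiv1\pmod m$) is exactly what makes $\det(\s)=\pm1$ collapse to $\det(\s)=1$; for $m=2$ the argument would fail, consistent with the lemma being stated for $m\geq 2$ only through the running assumption, though the determinant trick genuinely needs $m\geq 3$.

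Alternatively, and perhaps more in keeping with an ``obvious'' lemma, I would give the direct computational version: from the field $K(x_1,x_2,y_1)$ one already has the point $P_1$ and the $x$-line through $\pm P_2$. Adjoining $\z_m$ distinguishes $P_2$ from $-P_2$ because the pairing value $e_m(P_1,-P_2)=\z_m^{-1}\neq\z_m$ when $m\geq 3$; since $y_2$ and $-y_2$ are the two possible $y$-coordinates over $x_2$, and the pairing detects which sign corresponds to $y_2$ versus $-y_2$, the element $\z_m$ together with $P_1$ selects $y_2$ unambiguously. Either route yields $y_2\in K(x_1,x_2,\z_m,y_1)$ and hence the claimed equality; I would write up the Galois-theoretic version as the cleaner argument and flag that it is the template for the more refined generator reductions in the later sections.
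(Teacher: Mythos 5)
Your Galois-theoretic argument is correct and is essentially the paper's own proof: the authors likewise observe that an element fixing $P_1$, $x_2$ and $\z_m$ must be the matrix $\left(\begin{smallmatrix} 1 & 0 \\ 0 & \pm 1 \end{smallmatrix}\right)$ with determinant $1$, hence the identity. Your only inaccuracy is the closing caveat about $m=2$: there the argument does not fail, since $-1\equiv 1\pmod 2$ makes $\left(\begin{smallmatrix} 1 & 0 \\ 0 & -1 \end{smallmatrix}\right)$ the identity in $\GL_2(\Z/2\Z)$ (equivalently $P_2=-P_2$), so the lemma holds for $m=2$ as well.
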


\begin{proof} An endomorphism of $\E[m]$ fixing  $P_1$ and $x_2$ is of type
$\s=\left( \begin{array}{cc} 1 & 0 \\ 0 & \pm 1 \\ \end{array} \right)$. If it also fixes $\z_m$, then $\det(\s)=1$ and
eventually $\s=\Id$.
\end{proof}

We now show that $\z_m$ and $y_1y_2$ are closely related over the field $K(x_1,x_2)$.
Let $(x_3,y_3)$ (resp. $(x_4,y_4)\,$) be the coordinates of the point $P_3:=P_1+P_2$ (resp. $P_4:=P_1-P_2\,$).
By the group law of $\E$, we may express $x_3$ and $x_4$ in terms of $x_1\,$,
$x_2\,$, $y_1$ and $y_2\,$:
\begin{equation} \label{eqx34}
x_3= \frac{(y_1-y_2)^2}{(x_1-x_2)^2} - x_1 -x_2
\qquad{\rm and}\qquad
x_4= \frac{(y_1+y_2)^2}{(x_1-x_2)^2} - x_1 -x_2
\end{equation}
(note that $x_1\neq x_2$ because $P_1$ and $P_2$ are independent).
By taking the difference of these two equations we get
\begin{equation} \label{eqy1y2}
y_1y_2=\frac{(x_4-x_3)(x_1-x_2)^2}{4} \ .
\end{equation}

\begin{lemma}\label{34versusy}
We have $K(x_1,x_2, y_1y_2)=K(x_1,x_2,x_3,x_4)$ and $K_m=K_{m,x}(y_1)$
\end{lemma}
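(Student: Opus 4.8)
The plan is to prove the two equalities separately, using only the explicit relations (\ref{eqx34}) and (\ref{eqy1y2}) together with the Weierstrass equation of $\E$, which gives $y_i^2\in K(x_i)$ for $i=1,2$.

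For the first equality I would argue by double inclusion. The inclusion $K(x_1,x_2,y_1y_2)\subseteq K(x_1,x_2,x_3,x_4)$ is immediate from (\ref{eqy1y2}), which already expresses $y_1y_2$ as an element of $K(x_1,x_2,x_3,x_4)$. For the reverse inclusion I would recover $x_3$ and $x_4$ from $x_1,x_2,y_1y_2$ by playing the sum against the difference of the two equations in (\ref{eqx34}). Adding them gives $x_3+x_4=\frac{2(y_1^2+y_2^2)}{(x_1-x_2)^2}-2(x_1+x_2)$, which lies in $K(x_1,x_2)$ since $y_1^2$ and $y_2^2$ are the values of the Weierstrass polynomial at $x_1$ and $x_2$; subtracting them reproduces exactly (\ref{eqy1y2}) and shows $x_4-x_3\in K(x_1,x_2,y_1y_2)$. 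Hence both $x_3$ and $x_4$ lie in $K(x_1,x_2,y_1y_2)$, giving the equality.

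For the second equality I would first note that $K_{m,x}\subseteq K_m$, because every $m$-torsion point is a $\Z$-combination of $P_1$ and $P_2$, so its coordinates are rational functions of $x_1,x_2,y_1,y_2$ over $K$; in particular $K_{m,x}(y_1)\subseteq K_m$. Conversely, the points $P_3=P_1+P_2$ and $P_4=P_1-P_2$ are $m$-torsion, so $x_3,x_4\in K_{m,x}$, and the first equality then yields $y_1y_2\in K(x_1,x_2,x_3,x_4)\subseteq K_{m,x}$. Since $x_1,x_2\in K_{m,x}$ as well, the field $K_{m,x}(y_1)$ contains $x_1,x_2,y_1$ and $y_1y_2$; dividing the last by $y_1$ recovers $y_2$, so $K_m=K(x_1,x_2,y_1,y_2)\subseteq K_{m,x}(y_1)$, completing the proof.

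The only point needing care is the division by $y_1$, which requires $y_1\neq 0$, i.e. that $P_1$ is not $2$-torsion; this holds for any basis point as soon as $m\geq 3$. In the degenerate case $m=2$ all $y$-coordinates vanish, $K_2=K_{2,x}$, and both statements are immediate. I expect this degenerate case to be the only genuine subtlety, the rest being a direct manipulation of (\ref{eqx34}) and (\ref{eqy1y2}).
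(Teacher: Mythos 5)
Your proposal is correct and takes essentially the same route as the paper: both inclusions of the first equality come from \eqref{eqx34} and \eqref{eqy1y2} together with $y_i^2\in K(x_i)$, and the second statement follows by noting $x_3,x_4\in K_{m,x}$, hence $y_1y_2\in K_{m,x}$ by the first equality, so that $K_m=K_{m,x}(y_1,y_2)=K_{m,x}(y_1)$. Your explicit handling of the degenerate case $y_1=0$ (i.e.\ $m=2$, where division by $y_1$ fails but $K_2=K_{2,x}$ trivially) is a small point the paper leaves implicit; otherwise the two arguments coincide.
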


\begin{proof}
Since $y_i^2\in K(x_i)$, equations \eqref{eqx34} and \eqref{eqy1y2} prove the first equality.
For the final statement just note that $K_m=K_{m,x}(y_1,y_2)=K_{m,x}(y_1)$.
\end{proof}

\noindent More precisely, we have

\begin{lemma}\label{casi1}
Let $L=K(x_1,x_2)$. Exactly one of the following cases holds:
\begin{itemize}
\item[{\bf 1.}] $[K_m: L]=1$;
\item[{\bf 2.}] $[K_m: L]=2$ and $L(y_1y_2)=K_m\,$;
\item[{\bf 3.}] $[K_m: L]=2$, $L= L(y_1y_2)$ and $L(y_1)= L(y_2)=K_m\,$;
\item[{\bf 4.}] $[K_m: L]=4$ and $[L(y_1y_2):L]=2$.
\end{itemize}
\end{lemma}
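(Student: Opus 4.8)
The plan is to exploit the fact that $K_m/L$ is a multiquadratic extension and then split the analysis according to its degree. First I would record the two structural facts that drive everything. Since each $y_i$ satisfies $Y^2-y_i^2=0$ with $y_i^2\in K(x_i)\subseteq L$ (the relation used in the proof of Lemma~\ref{34versusy}), each extension $L(y_i)/L$ has degree $1$ or $2$; as $K_m=L(y_1,y_2)$, the tower $L\subseteq L(y_1)\subseteq K_m$ shows $[K_m:L]\in\{1,2,4\}$. The second fact is that $y_1,y_2\neq 0$ whenever $m\geq 3$: each $P_i$ has order exactly $m$ in the basis $\{P_1,P_2\}$, so for $m\geq 3$ it is not $2$-torsion and hence $y_i\neq 0$. (For $m=2$ one has $y_1=y_2=0$ and $K_m=L$, which is case~1; so when treating the cases with $[K_m:L]\geq 2$ I may assume $m\geq 3$ and thus $y_1,y_2\neq 0$.) These two observations organize the whole argument.

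Next I would run the case analysis on $d:=[K_m:L]$. If $d=1$ we are in case~1. If $d=4$ I claim $y_1y_2\notin L$: were $y_1y_2\in L$, then (using $y_1\neq 0$) $y_2=(y_1y_2)/y_1\in L(y_1)$, forcing $K_m=L(y_1)$ of degree $\leq 2$, a contradiction; since $(y_1y_2)^2=y_1^2y_2^2\in L$ this gives $[L(y_1y_2):L]=2$, which is case~4. The only remaining value is $d=2$, where I would split on the single dichotomy ``$y_1y_2\in L$ or not''. If $y_1y_2\notin L$, then $[L(y_1y_2):L]=2=d$, so $L(y_1y_2)=K_m$: case~2. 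If instead $y_1y_2\in L$ (so $L=L(y_1y_2)$), I must show $L(y_1)=L(y_2)=K_m$; since $d=2$ at least one of $y_1,y_2$ lies outside $L$, and if, say, $y_1\in L$ then $y_2=(y_1y_2)/y_1\in L$ (again using $y_1\neq 0$) would give $K_m=L$, contradicting $d=2$. Hence $y_1,y_2\notin L$ and $L(y_1)=L(y_2)=K_m$: case~3.

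Finally I would check that the four cases are mutually exclusive and exhaustive. Exhaustiveness is immediate because $[K_m:L]\in\{1,2,4\}$ and the degree-$2$ case is split by the complementary alternatives $y_1y_2\in L$ and $y_1y_2\notin L$; exclusivity follows because cases~1, $\{2,3\}$, $4$ are separated by the degree, while cases~2 and~3 are separated by whether $y_1y_2\in L$. The whole computation is really biquadratic Kummer theory over $L$ (in characteristic $\neq 2$): writing $\bar d_i$ for the class of $y_i^2$ in $L^\times/(L^\times)^2$, one has $[K_m:L]=|\langle\bar d_1,\bar d_2\rangle|$, while the alternative $y_1y_2\in L$ records exactly whether $\bar d_1=\bar d_2$, and the four cases are the resulting combinations. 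The one genuinely delicate point — and the step I would flag as the main obstacle — is the degree-$2$ subcase with $y_1y_2\in L$, where ruling out $y_1\in L$ uses $y_i\neq 0$; this is precisely why the $m=2$ degeneracy must be isolated first, after which everything else is bookkeeping with degrees of quadratic extensions.
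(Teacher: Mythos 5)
Your proof is correct and follows essentially the same route as the paper's: a case analysis on $[K_m:L]\in\{1,2,4\}$ combined with the dichotomy $y_1y_2\in L$ versus $y_1y_2\notin L$, using that each $y_i$ is at most quadratic over $L$. The only difference is that you make explicit two points the paper leaves implicit (that $y_i\neq 0$ for $m\geq 3$, which justifies the division steps, and the degenerate case $m=2$), which is a harmless elaboration rather than a different argument.
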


\begin{proof}
Obviously the degree of $K_m$ over $L$ divides $4$. If $[K_m: L]=1$, then we are in case {\bf 1}.
If $[K_m: L]=4$, then  $y_1$ and $y_2$ must generate different quadratic extensions of $L$ and so $[L(y_1y_2):L]=2$
and we are in case {\bf 4}.
If $[K_m: L]=2$ and $y_1y_2\notin L$, then we are in case {\bf 2}. Now suppose that $[K_m: L]=2$ and $y_1y_2\in L$.
Then $y_1$ and $y_2$ generate the same extension of $L$ and this extension is nontrivial,
so we are in case {\bf 3}.
\end{proof}

\begin{lemma}\label{zetafuoriL}
If $y_1y_2\notin K(x_1,x_2)$, then $\z_m\notin K(x_1,x_2)$.
\end{lemma}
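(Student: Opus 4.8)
The plan is to prove the contrapositive, and in fact to establish the cleaner equivalence $\z_m\in L\iff y_1y_2\in L$, where $L=K(x_1,x_2)$. Since $K_m/K$ is Galois, so is $K_m/L$, and I would carry out the whole argument inside the group $\Gal(K_m/L)$, reading off both membership conditions directly from its elements.

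First I would pin down $\Gal(K_m/L)$ explicitly. Any $\s\in\Gal(K_m/L)$ fixes $x_1$ and $x_2$, hence fixes the $x$-coordinate of $P_1$ and of $P_2$; since the only points of $\E$ sharing an $x$-coordinate with $P_i$ are $\pm P_i$, this forces $\s(P_i)=\pm P_i$, so the matrix $\rho_{\E,m}(\s)$ is diagonal with entries $\pm 1$. Thus $\Gal(K_m/L)$ is a subgroup of the four diagonal sign matrices, and among these $\det(\s)=1$ holds precisely for $\pm\Id$.

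Next I would compute how such an element acts on each of the two quantities. From the relation $\s(\z_m)=\z_m^{\det(\s)}$ recalled before Lemma~\ref{zetam}, the element $\s$ fixes $\z_m$ iff $\det(\s)\equiv 1\pmod m$, which for a value $\det(\s)=\pm1$ and $m\geq 3$ means $\det(\s)=1$. On the other side, writing $\s=\mathrm{diag}(\epsilon_1,\epsilon_2)$ with $\epsilon_i=\pm1$ gives $\s(y_i)=\epsilon_i y_i$ because $y(-P)=-y(P)$, so $\s(y_1y_2)=\epsilon_1\epsilon_2\,y_1y_2=\det(\s)\,y_1y_2$. Provided $y_1y_2\neq 0$, this shows $\s$ fixes $y_1y_2$ iff $\det(\s)=1$, which is exactly the same condition as for $\z_m$. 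Hence both $\z_m\in L$ and $y_1y_2\in L$ are equivalent to $\Gal(K_m/L)\subseteq\{\pm\Id\}$, yielding the equivalence and in particular the stated implication.

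The single point needing care — and the main potential obstacle — is the non-vanishing of $y_1y_2$. Since $\{P_1,P_2\}$ is a $\Z$-basis of $\E[m]$, each $P_i$ generates a free summand and so has order exactly $m$; for $m\geq 3$ this exceeds $2$, so $P_i$ is not $2$-torsion and $y_i\neq 0$. When $m=2$ one has $y_1=y_2=0$, whence $y_1y_2=0\in L$ and the hypothesis $y_1y_2\notin L$ is vacuous, so the statement holds trivially. I would record this reduction at the very start, so that the vanishing case is disposed of before the diagonal-matrix computation, where the cancellation $\s(y_1y_2)=\det(\s)\,y_1y_2$ tacitly relies on $y_1y_2\neq 0$.
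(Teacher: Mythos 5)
Your proof is correct and rests on the same mechanism as the paper's: every element of $\Gal(K_m/K(x_1,x_2))$ is a diagonal sign matrix, and such an element moves $y_1y_2$ exactly when its determinant is $-1$, in which case it sends $\z_m$ to $\z_m^{-1}\neq\z_m$ since $m\geq 3$. The paper arrives at the same computation by invoking Lemma~\ref{casi1} to exhibit a single $\tau$ with $\tau(y_1y_2)=-y_1y_2$, whereas you quantify over the whole (Galois) group and thereby also get the reverse implication (a consequence of Theorem~\ref{zetaversusy}) together with an explicit check that $y_1y_2\neq 0$ and that $m=2$ is vacuous --- welcome precision, but not a genuinely different argument.
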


\begin{proof} We are in case {\bf 2} or case {\bf 4} of Lemma \ref{casi1} and, in particular, $m>2$ because
of $K_2=L$. We have $[L(y_1y_2):L]=2$ and there exists
$\tau \in \Gal(K_m/L)$ such that $\tau(y_1y_2)=-y_1y_2\,$. Without
loss of generality, we may suppose $\tau(y_1)=-y_1$ and
$\tau(y_2)=y_2$ so that $\tau=\left( \begin{array}{cc} -1 & 0 \\ 0 & 1 \\ \end{array} \right)$ and
$\tau(\z_m) =\z_m^{-1}\,$.
Since $m\neq 2$, $\z_m^{-1}\neq \z_m$ and we get $\z_m\notin L$.
\end{proof}

\noindent The connection between $\z_m$ and $y_1y_2$ is provided by the following statement.

\begin{theorem}\label{zetaversusy}
We have
\[ K(x_1,x_2, \z_m)=K(x_1,x_2, y_1y_2) \ .\]
\end{theorem}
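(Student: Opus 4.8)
The plan is to prove both inclusions between the two fields $K(x_1,x_2,\z_m)$ and $K(x_1,x_2,y_1y_2)$, working over the base field $L=K(x_1,x_2)$ and exploiting the action of $\Gal(K_m/L)$ together with the formula $\s(\z_m)=\z_m^{\det(\s)}$ recorded above. The key structural fact is that any $\s\in\Gal(K_m/L)$ fixes $x_1$ and $x_2$, hence permutes $\{y_1,-y_1\}$ and $\{y_2,-y_2\}$ independently, so $\s$ is necessarily one of the four diagonal matrices $\left(\begin{smallmatrix} \pm1 & 0 \\ 0 & \pm1 \end{smallmatrix}\right)$. This means $\Gal(K_m/L)$ is a subgroup of $\{\pm1\}\times\{\pm1\}$, and I can read off directly how each such $\s$ acts on both $\z_m$ (via its determinant) and on the product $y_1y_2$ (via the product of the two signs).

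First I would handle the inclusion $K(x_1,x_2,y_1y_2)\subseteq K(x_1,x_2,\z_m)$, equivalently showing that every $\s\in\Gal(K_m/L)$ fixing $\z_m$ also fixes $y_1y_2$. If $\s=\left(\begin{smallmatrix}\varepsilon_1 & 0 \\ 0 & \varepsilon_2\end{smallmatrix}\right)$ fixes $\z_m$, then $\det(\s)=\varepsilon_1\varepsilon_2=1$, which forces $\s(y_1y_2)=\varepsilon_1\varepsilon_2\,y_1y_2=y_1y_2$; hence $y_1y_2$ lies in the fixed field of the stabiliser of $\z_m$, i.e.\ in $K(x_1,x_2,\z_m)$. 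Conversely, for $K(x_1,x_2,\z_m)\subseteq K(x_1,x_2,y_1y_2)$, I would show that any $\s$ fixing $y_1y_2$ also fixes $\z_m$: such $\s$ satisfies $\varepsilon_1\varepsilon_2=1$, so $\det(\s)=1$ and $\s(\z_m)=\z_m^{\det(\s)}=\z_m$. The two implications are essentially symmetric because the sign $\varepsilon_1\varepsilon_2$ controlling the action on $y_1y_2$ is exactly the determinant controlling the action on $\z_m$.

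The mild subtlety, rather than a genuine obstacle, is the degenerate case $m=2$, where $\z_m=\z_2=-1\in K$ always and the determinant condition becomes vacuous; there $\Gal(K_m/L)$ is trivial by Lemma \ref{casi1} (case \textbf{1}, since $K_2=L$) and both fields collapse to $L$, so the equality holds trivially. For $m>2$ I can invoke Lemma \ref{zetafuoriL} to align the cases: whenever $y_1y_2\notin L$ we also have $\z_m\notin L$, so neither field is trivial and the Galois correspondence applies cleanly. In short, the whole argument reduces to the identity $\det\left(\begin{smallmatrix}\varepsilon_1 & 0 \\ 0 & \varepsilon_2\end{smallmatrix}\right)=\varepsilon_1\varepsilon_2$ matching the two group actions, and I expect the hardest part to be merely the bookkeeping that confirms $\Gal(K_m/L)$ consists only of diagonal $\pm1$ matrices, which follows immediately from $\s$ fixing both $x$-coordinates.
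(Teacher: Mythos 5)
Your argument is correct, and it is organized differently from the paper's. The paper proves the theorem by running through the four cases of Lemma \ref{casi1}: in each case it exhibits a specific automorphism $\tau$ (e.g.\ $-\Id$ in cases \textbf{3} and \textbf{4}), shows $\tau(\z_m)=\z_m$ to get the inclusion $\z_m\in K(x_1,x_2,y_1y_2)$, and then finishes the reverse inclusion with a degree count: in cases \textbf{2} and \textbf{4} it combines $[L(y_1y_2):L]=2$, $\z_m\notin L$ (Lemma \ref{zetafuoriL}) and $L(\z_m)\subseteq L(y_1y_2)$ to force $L(\z_m)=L(y_1y_2)$. You instead collapse everything into a single Galois-correspondence argument: every $\s\in\Gal(K_m/L)$ is diagonal of the form $\left(\begin{smallmatrix}\varepsilon_1&0\\0&\varepsilon_2\end{smallmatrix}\right)$ with $\varepsilon_i=\pm1$, and the sign $\varepsilon_1\varepsilon_2$ that governs the action on $y_1y_2$ is literally $\det(\s)$, which governs the action on $\z_m$; hence $\z_m$ and $y_1y_2$ have the same stabilizer in $\Gal(K_m/L)$, so $L(\z_m)$ and $L(y_1y_2)$ coincide as the fixed field of that common subgroup. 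This is more uniform (no case split, and in fact it makes your appeal to Lemma \ref{zetafuoriL} superfluous --- the stabilizer identity already gives both inclusions at once), at the cost of being slightly less explicit about which of the four configurations of Lemma \ref{casi1} actually occur; the paper's version has the side benefit that the case analysis is reused later (e.g.\ in the proof of Theorem \ref{classification_m=3}, which cites ``the proof of'' this theorem).

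One point you should make explicit: the converse implication ``$\s$ fixes $y_1y_2$ $\Rightarrow$ $\varepsilon_1\varepsilon_2=1$'' requires $y_1y_2\neq 0$, just as ``$\s$ fixes $\z_m$ $\Rightarrow$ $\det(\s)=1$'' requires $\z_m\neq\z_m^{-1}$. You noted the latter and disposed of $m=2$ separately (where indeed $y_1=y_2=0$ and both fields are $L$), but for $m>2$ you should record that $P_1$ and $P_2$, being basis elements, have exact order $m>2$, hence are not $2$-torsion and $y_1,y_2\neq 0$. This is a one-line fix, not a genuine gap.
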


\begin{proof} We first prove that $\z_m\in K(x_1,x_2,y_1y_2)$ by
considering the four cases  of Lemma \ref{casi1}.

\noindent\emph{Case {\bf 1} or {\bf 2}}: we have $K(x_1,x_2, y_1y_2)=K_m$ so the statement clearly holds.

\noindent\emph{Case {\bf 3}}: we have $K_m=L(y_1)$ and $y_1y_2\in L$ so the nontrivial element $\tau\in \Gal(K_m/L)$ maps $y_i$
to $-y_i$ for $i=1,2$. In particular, $\tau=-\Id$ and $\tau(\z_m)=\z_m\,$.
Hence $\z_m\in L=K(x_1,x_2)$.

\noindent\emph{Case {\bf 4}}: since $K_m=L(y_1,y_2)$ and
$\Gal(K_m/L)\simeq \Z/2\Z \times \Z/2\Z$, there exists $\tau\in \Gal(K_m/L)$ such that $\tau(y_i)=-y_i$ for $i=1,2$.
The field fixed by $\tau$ is $L(y_1y_2)$ and, as in the previous case, we get $\tau(\z_m)=\z_m\,$:
so $\z_m\in L(y_1y_2)=K(x_1,x_2, y_1y_2)$.

\noindent Now the statement of the theorem is clear if we are in case {\bf 1} or in case {\bf 3} of Lemma \ref{casi1}.
In cases {\bf 2} and {\bf 4} of Lemma \ref{casi1} we have $[L(y_1y_2):L]=2$, $\z_m\notin L$ (Lemma \ref{zetafuoriL})
and $L(\z_m)\subseteq L(y_1y_2)$. These three facts yield $L(\z_m)=L(y_1y_2)$.
\end{proof}

\noindent We conclude this section with the equality appearing in the title, which still focuses
more on the $x$-coordinates. For that we shall need the following lemma.

\begin{lemma}\label{twist}
The extension $K(x_1,x_2)/K(x_1+x_2,x_1x_2)$ has degree $\leqslant 2$. Its Galois group
is either trivial or generated by $\s$ with $\s(x_i)=x_j$ ($i\neq j$).
\end{lemma}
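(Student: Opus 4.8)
The plan is to recognize $x_1$ and $x_2$ as the two roots of a single quadratic polynomial over the ground field $F:=K(x_1+x_2,x_1x_2)$, so that $K(x_1,x_2)$ becomes the splitting field of that quadratic. Concretely, I would set
\[ f(T):=T^2-(x_1+x_2)T+x_1x_2=(T-x_1)(T-x_2)\in F[T], \]
and observe that both $x_1$ and $x_2$ are roots of $f$, while its coefficients lie in $F$ by construction.

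First I would note that $K(x_1,x_2)=F(x_1)$: once $x_1$ is adjoined to $F$, the second coordinate is recovered as $x_2=(x_1+x_2)-x_1\in F(x_1)$. Since $x_1$ is a root of the degree-two polynomial $f\in F[T]$, its minimal polynomial over $F$ has degree at most $2$, whence $[K(x_1,x_2):F]=[F(x_1):F]\leqslant 2$. This settles the degree claim.

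For the statement on the Galois group I would distinguish the two possible degrees. If $[K(x_1,x_2):F]=1$ the group is trivial and there is nothing to prove. If the degree equals $2$, then $f$ is irreducible over $F$ and $K(x_1,x_2)$ is its splitting field; moreover $f$ is separable because its two roots $x_1$ and $x_2$ are distinct (recall $x_1\neq x_2$, as $P_1$ and $P_2$ are independent). Hence $K(x_1,x_2)/F$ is a Galois extension of degree $2$, and its unique nontrivial automorphism $\s$ must permute the roots of $f$. Since $\s\neq\Id$ it cannot fix $x_1$, so necessarily $\s(x_1)=x_2$ and $\s(x_2)=x_1$, that is $\s(x_i)=x_j$ with $i\neq j$.

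There is no serious obstacle in this argument: everything rests on the elementary observation that the symmetric functions $x_1+x_2$ and $x_1x_2$ pin down a quadratic whose roots are exactly $x_1$ and $x_2$. The only point requiring a moment of care is the separability needed to conclude that a degree-two extension is genuinely Galois, and this is guaranteed by the independence of $P_1$ and $P_2$, which forces $x_1\neq x_2$.
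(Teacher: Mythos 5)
Your proof is correct and follows exactly the paper's approach: the paper's entire proof is the one-line observation that $x_1$ and $x_2$ are the roots of $X^2-(x_1+x_2)X+x_1x_2$ over $K(x_1+x_2,x_1x_2)$, which is precisely your quadratic $f$. Your additional care about separability (via $x_1\neq x_2$, from the independence of $P_1$ and $P_2$) is a worthwhile detail the paper leaves implicit.
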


\begin{proof}
Just note that $x_1$ and $x_2$ are the roots of $X^2-(x_1+x_2)X+x_1x_2\,$.
\end{proof}

\begin{corollary}\label{RealZeta2}
We have $K(\z_m+\z_m^{-1})\subseteq K(x_1+x_2,x_1x_2)$.
\end{corollary}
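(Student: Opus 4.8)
The plan is to use that $K_m/K$ is a Galois extension and to show that $\z_m+\z_m^{-1}$ lies in the fixed field of $\Gal(K_m/F)$, where I write $F:=K(x_1+x_2,x_1x_2)$ for brevity. Since $\z_m+\z_m^{-1}\in K(\z_m)\subseteq K_m$ and $K_m/F$ is Galois (being an intermediate extension of the Galois extension $K_m/K$), it suffices to check that $\s(\z_m+\z_m^{-1})=\z_m+\z_m^{-1}$ for every $\s\in\Gal(K_m/F)$.

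First I would analyse how such a $\s$ acts on the $x$-coordinates. As in Lemma \ref{twist}, any $\s\in\Gal(K_m/F)$ fixes $x_1+x_2$ and $x_1x_2$, hence permutes the two roots $x_1,x_2$ of $X^2-(x_1+x_2)X+x_1x_2$. Thus either $\s$ fixes both $x_1$ and $x_2$, or it swaps them. Translating this into the matrix $\rho_{\E,m}(\s)$ (still denoted $\s$): fixing $x_i$ forces $\s(P_i)=\pm P_i$, while swapping forces $\s(P_1)=\pm P_2$ and $\s(P_2)=\pm P_1$. Consequently $\s$ is represented either by a diagonal or by an anti-diagonal matrix with entries in $\{\pm1\}$.

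In either case $\det(\s)=\pm1$, so that $\s(\z_m)=\z_m^{\det(\s)}\in\{\z_m,\z_m^{-1}\}$. Therefore $\s(\z_m+\z_m^{-1})=\z_m^{\det(\s)}+\z_m^{-\det(\s)}=\z_m+\z_m^{-1}$. As this holds for every $\s\in\Gal(K_m/F)$, the element $\z_m+\z_m^{-1}$ belongs to $F$, which yields the desired inclusion $K(\z_m+\z_m^{-1})\subseteq K(x_1+x_2,x_1x_2)$.

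I do not expect any serious obstacle: the only point requiring care is the bookkeeping that permuting the $x$-coordinates forces the representing matrix to be diagonal or anti-diagonal with $\pm1$ entries, hence of determinant $\pm1$; once this is recorded, the formula $\s(\z_m)=\z_m^{\det(\s)}$ does the rest. Alternatively, one could first establish $\z_m+\z_m^{-1}\in K(x_1,x_2)$ (every element of $\Gal(K_m/K(x_1,x_2))$ being diagonal with $\pm1$ entries, so of determinant $\pm1$) and then descend to $F$ using Lemma \ref{twist}; but the one-step argument above seems cleaner.
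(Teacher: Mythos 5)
Your proof is correct and follows essentially the same approach as the paper: both rest on Lemma \ref{twist} to see that any $\s\in\Gal(K_m/K(x_1+x_2,x_1x_2))$ acts on $\{P_1,P_2\}$ as a diagonal or anti-diagonal matrix with entries $\pm 1$, hence $\det(\s)=\pm 1$ and $\s(\z_m)=\z_m^{\pm 1}$ fixes $\z_m+\z_m^{-1}$. The paper merely phrases this in two steps (first $\z_m+\z_m^{-1}\in K(x_1,x_2)$, then descent along the at most quadratic extension), which is exactly the alternative route you mention at the end, so the two arguments coincide in substance.
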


\begin{proof}
This is obvious if $K(x_1,x_2)=K(x_1+x_2,x_1x_2)$. If they are different, take
the nontrivial element $\s$ of $\Gal(K(x_1,x_2)/K(x_1+x_2,x_1x_2))$. By Lemma \ref{twist},
we have $\s(P_i)=\pm P_j$ ($i\neq j$), hence $\det(\s)=\pm 1$.
\end{proof}

\begin{theorem}\label{dihedral}
For $m\geqslant 3$ we have $K_m=K(x_1+x_2,x_1x_2,\z_m,y_1)$.
\end{theorem}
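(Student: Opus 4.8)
The plan is to set $M:=K(x_1+x_2,x_1x_2,\z_m,y_1)$ and to prove the two inclusions separately. The inclusion $M\subseteq K_m$ is immediate, so the whole content is the reverse inclusion, which I would reduce to showing $x_1\in M$: once $x_1\in M$ we also get $x_2=(x_1+x_2)-x_1\in M$, and then $K_m=K(x_1,x_2,\z_m,y_1)\subseteq M$ by Lemma \ref{zetam}. First I would record, again via Lemma \ref{zetam}, that
\[ K_m=K(x_1,x_2,\z_m,y_1)=M(x_1,x_2)=M(x_1), \]
the last equality because $x_2\in M(x_1)$. Since $x_1$ and $x_2$ are the two roots of $X^2-(x_1+x_2)X+x_1x_2\in M[X]$, this shows $[K_m:M]\leq 2$, and it remains only to exclude the possibility $[K_m:M]=2$.

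So the heart of the argument is a Galois-theoretic contradiction. Assuming $[K_m:M]=2$, the extension $K_m/M$ is separable (as $m$ is prime to $\mathrm{char}(K)$) and contains both roots $x_1,x_2$ of the quadratic above, hence is Galois with a unique nontrivial automorphism $\s$. This $\s$ fixes $\z_m$ and $y_1$ (both in $M$) and must move $x_1$ (else it would fix $M(x_1)=K_m$), so it swaps $x_1$ and $x_2$. Translating into the matrix language introduced before Lemma \ref{zetam}, $\s(P_1)=\pm P_2$ and $\s(P_2)=\pm P_1$, so
\[ \s=\left(\begin{array}{cc} 0 & a \\ b & 0 \end{array}\right),\qquad a,b\in\{\pm 1\}. \]
Now I would feed in the two remaining constraints. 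Fixing $\z_m$ forces $\det(\s)=-ab=1$, i.e. $ab=-1$, and therefore $\s^2=ab\cdot\Id=-\Id$. Since $m\geq 3$ we have $-\Id\neq\Id$ in $\GL_2(\Z/m\Z)$, and because the representation $\Gal(K_m/K)\hookrightarrow\GL_2(\Z/m\Z)$ is faithful this means $\s^2\neq\Id$ as a field automorphism. This is the desired contradiction, since $\s$ lies in the order-two group $\Gal(K_m/M)$ and must satisfy $\s^2=\Id$. Hence $[K_m:M]=1$.

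The one genuine obstacle is exactly this swapping automorphism: a priori nothing forbids a $\s$ interchanging $x_1$ and $x_2$, so the degree can drop only if such a $\s$ turns out to be incompatible with fixing $\z_m$. The role of $\z_m$ is precisely to pin down $\det(\s)=1$, forcing $ab=-1$ and $\s^2=-\Id$; this is also where the hypothesis $m\geq 3$ enters, since for $m=2$ one has $-\Id=\Id$ and the argument collapses (consistently with $K_2=K(x_1,x_2)$). I expect no difficulty in the separability and degree bookkeeping, so the proof rests entirely on this short matrix computation.
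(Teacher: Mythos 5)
Your proof is correct, and it takes a genuinely different and more economical route than the paper's. The paper works over the base $K(x_1+x_2,x_1x_2)$, bounds the full group $G=\Gal(K_m/K(x_1+x_2,x_1x_2))$ inside the dihedral group $D_4=\langle\tau_1,\tau_2\rangle$ generated by the possible swapping matrices, and then runs a case-by-case analysis according to $[K_m:K(x_1,x_2)]\in\{1,2,4\}$, identifying in each case the subgroup fixing $K(x_1+x_2,x_1x_2,\z_m,y_1)$ (and needing auxiliary facts such as $y_1\neq\pm y_2$ and the classification in Lemma \ref{casi1}). You instead adjoin $\z_m$ and $y_1$ from the outset: Lemma \ref{zetam} together with the quadratic $X^2-(x_1+x_2)X+x_1x_2$ gives $K_m=M(x_1)$ with $[K_m:M]\leq 2$, and the single potentially surviving automorphism is forced to be anti-diagonal with determinant $1$, hence satisfies $\s^2=-\Id\neq\Id$ for $m\geq 3$, which is incompatible with membership in a group of order $2$ by faithfulness of $\rho_{\E,m}$. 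This is the same underlying mechanism the paper exploits — its element $\tau_2=\left(\begin{smallmatrix}0&-1\\1&0\end{smallmatrix}\right)$ has order $4$ with $\tau_2^2=-\Id$ — but you use it once, head-on, instead of through the subgroup analysis. What the paper's longer route buys is the explicit structural picture (the possible groups $G\subseteq D_4$ and intermediate fields such as the fixed field of $\langle\tau_2\rangle$), which is of independent interest; for the bare statement your argument suffices and is cleaner. Two small points you could make fully explicit: $\s(x_2)=x_1$ follows from $\s^2=\Id$ as a field automorphism (or from $\s$ fixing $x_1+x_2$ and $x_1\neq x_2$), and the step from $-ab\equiv 1\pmod m$ to $ab=-1$ (with $a,b\in\{\pm 1\}$) uses $m\geq 3$, which is indeed exactly where $m=2$ collapses, consistently with Remark \ref{Edihedral}.
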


\begin{proof}
We consider the tower of fields
\[ K(x_1+x_2,x_1x_2) \subseteq K(x_1,x_2) \subseteq K(x_1,x_2,\z_m,y_1)=K_m \]
and adopt the following notations:
\[ \begin{split} G&:=\Gal (K_m/K(x_1+x_2,x_1x_2))\ , \\
 H&:=\Gal (K_m/K(x_1,x_2))\vartriangleleft G \ ,\\
G/H& =\Gal (K(x_1,x_2)/K(x_1+x_2,x_1x_2))\ .\\
\end{split} \]
If  $K(x_1+x_2,x_1x_2)=K(x_1,x_2)$, then the statement holds by Lemma \ref{zetam}.\\
By Lemma \ref{twist}, we may now assume that $G/H$ has order $2$ and its nontrivial automorphism swaps $x_1$ and $x_2\,$.
Then there is at least one element $\tau\in G$ such that $\tau(x_i)=x_j$, with $i,j\in\{1,2\}$ and $i\neq j$.
Therefore $\tau(y_i)=\pm y_j$. The possibilities are:
\[ \tau=\pm \tau_1 = \left( \begin{array}{cc} 0 & \pm 1 \\ \pm 1 & 0 \\ \end{array} \right)\
{\rm (of\ order\ 2)\ and}\
\tau=\pm \tau_2 = \left( \begin{array}{cc} 0 & \mp 1 \\ \pm 1 & 0 \\ \end{array} \right)\
{\rm (of\ order\ 4)} \ .\]
Note that $\tau_2^2=-\Id$ fixes both $x_1$ and $x_2\,$, i.e. the generators of the field $L$ of Lemma
\ref{casi1}.  Moreover, if $y_2=\pm y_1\,$, then we have
\[ \tau_2^2(P_1)=\tau_2(P_2)=\tau_2(x_2,\pm y_1)=(x_1,\pm y_2)=P_1, \  \]
a contradiction. The automorphisms $\tau_1$ and $\tau_2$ generate
a non abelian group of order 8 with two elements of order 4, i.e., the dihedral group
\[ D_4=\langle\tau_1\,,\tau_2\,:\,\tau_1^2=\tau_2^4=\Id\ {\rm and}\ \tau_1\tau_2\tau_1=\tau_2^3\rangle \ .\]
So $G$ is a subgroup of $D_4\,$. Since $G/H$ has order $2$, $H$ is isomorphic to either
$1$, $\Z/2\Z$ or $(\Z/2\Z)^2$ (note that $\tau_2\not\in H$) and its nontrivial elements can at most be the following
 \[ \tau_1\tau_2=\tau_2^3\tau_1 = \left( \begin{array}{cc} -1 & 0 \\ 0 & 1 \\ \end{array} \right) \ ,\
\tau_2\tau_1=\tau_1\tau_2^3 = \left( \begin{array}{cc} 1 & 0 \\ 0 & -1 \\ \end{array} \right)
\ {\rm and}\ \  -\Id \ .\]
We distinguish three cases according to the possible degrees $[K_m:K(x_1,x_2)]$ mentioned in Lemma \ref{casi1}.

\noindent \emph{The case $K_m=K(x_1,x_2)$.} Since $|H|=1$ and $|G/H|=2$, then $|G|=2$. The nontrivial automorphism of $G$
has to be $\pm \tau_1\,$. In both cases $G$ does not fix $\z_m\,$: so
$\z_m\in K(x_1,x_2)- K(x_1+x_2,x_1x_2)$ and we deduce $K(x_1+x_2,x_1x_2,\z_m)=K(x_1,x_2)$ $=K_m\,$.

\noindent\emph{The case $[K_m:K(x_1,x_2)]=4$.} Since $|H|=4$ and $|G/H|=2$, we have $G\simeq D_4\,$.
The subgroup $\langle\tau_2\rangle$ of $D_4$ is normal of index $2$ and it does not contain $\tau_1\,$.
Moreover, $\tau_2$ fixes $\z_m$ and $\tau_1$ does not. Then we have
\[ \Gal(K_m/K(x_1+x_2,x_1x_2,\z_m))=\langle\tau_2\rangle \]
and $[K(x_1+x_2,x_1x_2,\z_m):K(x_1+x_2,x_1x_2)]=2$.
If $y_1^2\in K(x_1+x_2,x_1x_2,\z_m)$, then $y_1^2=\tau_2(y_1)^2=y_2^2$, giving $y_1=\pm y_2$ and we already ruled this out.
%This would imply that $K_m=K(x_1,x_2,y_1)$ has degree $2$ over $K(x_1,x_2)$, contradicting the assumptions of this case.
Then the degree of the extensions
\[ K(x_1+x_2,x_1x_2) \subset K(x_1+x_2,x_1x_2,\z_m)\subset K(x_1+x_2,x_1x_2,\z_m,y_1) \]
are, respectively, $2$ and at least $4$. Since the extension $K_m/K(x_1+x_2,x_1x_2)$ has degree $8$
the statement follows.

\noindent\emph{The case $[K_m:K(x_1,x_2)]=2$.} Since $|H|=2$ and $|G/H|=2$, then $|G|=4$.
We have to exclude $G=\langle\tau_2\tau_1, -\Id\rangle$,
because these automorphisms fix both $x_1$ and $x_2$, so we would have $G= H$. We are left with $H=\langle -\Id\rangle$
and one the following two possibilities:
\[ G=\langle\tau_2\rangle\qquad{\rm or}\qquad G=\langle\tau_1,-\Id\rangle \ .\]
We now consider each of the two subcases separately.
Assume $G=\langle\tau_2\rangle$ and recall that $y_1\neq\pm y_2$.
%indeed, if $y_1=\pm y_2\,$, then $\tau_2$ fixes
%$y_1$ or $y_2$ and so  $y_1,y_2$ are both contained in $K(x_1+x_2,x_1x_2)$. This yields $K_m=K(x_1,x_2)$, a contradiction.
%So we know  $y_1\neq \pm y_2\,$.
Then $y_1$ and $y_1^2$ are not fixed by any element in $G$, i.e.,
\[ [K(x_1+x_2,x_1x_2, y_1):K(x_1+x_2,x_1x_2)]=4\]
and $K(x_1+x_2,x_1x_2, y_1)=K_m\,$.
Now assume $G=\langle\tau_1,-\Id\rangle$: since $\tau_1$ does not fix $\z_m$ while $-\Id$ does, we have
\[ K(x_1,x_2)=K(x_1+x_2,x_1x_2,\z_m)\ .\]
Hence $K(x_1+x_2,x_1x_2,\z_m,y_1)=K(x_1,x_2,\z_m,y_1)=K_m\,$.
\end{proof}

\begin{remark}\label{Edihedral} The equality $K_2=K(x_1+x_2,x_1x_2,\z_2,y_1)$ does not hold in general.
Indeed it is equivalent to $K_2=K(x_1+x_2,x_1x_2)$ and one can take $\E:\,y^2=x^3-1$ (defined over $\Q$)
and the points $\{P_1=(\z_3, 0), P_2=(\z_3^2, 0)\}$ (as a $\Z$-basis for $\E[2]\,$) to get
$K_2=\Q({\boldsymbol\mu}_3)$ and $\Q(x_1+x_2,x_1x_2)=\Q$. The equality would hold for any other basis, but
the previous theorems allow total freedom in the choice of $P_1$ and $P_2\,$.
\end{remark}

\section{The equality $K_m=K(x_1,\z_m,y_2)$}\label{MinSet}
We start by proving the equality  $K_m=K(x_1,\z_m,y_1,y_2)$ for every odd $m\geqslant 5$.
The cases $m=2$, 3 and 4 are treated in Remark \ref{Eordinates1}, Section \ref{Secm=3} and Section \ref{Secm=4}
respectively.

\begin{theorem} \label{ordinates1}
Let $m\geqslant 4$. If $m$ is an odd number, then $K_m=K(x_1,\z_m,y_1,y_2)$. If $m$ is an even number, then $K_m$ is larger than
$K(x_1,\z_m,y_1,y_2)$ if and only if $[K_m:K(x_1,\z_m,y_1,y_2)]=2$ and its Galois group is generated by the element sending
$P_2$ to $\frac{m}{2}P_1+P_2$. In particular, if $m$ is even then $K_{\frac{m}{2}}\subseteq K(x_1,\z_m,y_1,y_2)$.
\end{theorem}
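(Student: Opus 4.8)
The plan is to analyze this via the Galois group $G := \Gal(K_m/K(x_1,\z_m,y_1,y_2))$ sitting inside $\Gal(K_m/K)$, realized as matrices in $\GL_2(\Z/m\Z)$ acting on the basis $\{P_1,P_2\}$. Any $\s \in G$ must fix $x_1$, $y_1$, $\z_m$, and $y_2$.

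\medskip

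First I would pin down the matrix form of an arbitrary $\s \in G$. Fixing $y_1$ (hence $x_1$ and the full point $P_1$) forces $\s(P_1)=P_1$, so the first column of $\s$ is $\binom{1}{0}$; write $\s = \left(\begin{smallmatrix} 1 & b \\ 0 & d \end{smallmatrix}\right)$. Fixing $\z_m$ forces $\det(\s)=d=1$, so $\s = \left(\begin{smallmatrix} 1 & b \\ 0 & 1 \end{smallmatrix}\right)$ and $\s(P_2)=bP_1+P_2$. The remaining constraint is that $\s$ fixes $y_2$, i.e. $y(bP_1+P_2)=y(P_2)$. Since $\s$ already fixes $x_2$ automatically here (the $x$-coordinate of $bP_1+P_2$ versus $P_2$ need not agree, so one must be careful), the cleaner statement is: $\s$ fixes $y_2$ means $y(bP_1+P_2)=y_2$, which forces $bP_1+P_2 = P_2$ (giving $b=0$, i.e. $\s=\Id$) or $bP_1+P_2 = -P_2$. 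The latter equation reads $bP_1 = -2P_2$, and since $P_1,P_2$ are a $\Z/m\Z$-basis this is impossible unless $2P_2 \in \langle P_1\rangle$, which never happens for independent points unless a torsion collision occurs. I would phrase this precisely: $bP_1+P_2=-P_2$ has no solution in general, so the only way $y_2$ is fixed with a nontrivial $b$ is through the special point $bP_1+P_2$ sharing the $y$-coordinate of $P_2$ without being $\pm P_2$ — which cannot happen because two points of $\E$ with equal $y$-coordinate are equal or negatives of each other (in char $\neq 2,3$; handle the general group-law statement that $y(Q)=y(Q')$ implies $Q'=Q$ or $Q'=-Q+(\text{2-torsion contribution})$).

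\medskip

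This is where the parity of $m$ enters and constitutes the main obstacle. For odd $m$, the element $-\Id$ is the unique involution compatible with fixing $\z_m$, but here $\s$ has the shape $\left(\begin{smallmatrix} 1 & b \\ 0 & 1 \end{smallmatrix}\right)$, and I would show that demanding $y(bP_1+P_2)=y(P_2)$ forces $bP_1 \in \{O, -2P_2\}$; the element $-2P_2 \in \langle P_1\rangle$ would require the $x$-coordinates to match, and tracking this through the group law shows it can only be solved when $b \cdot P_1$ equals a $2$-torsion translate of $-2P_2$, i.e. when $\frac{m}{2}$ is an integer. Concretely, the nontrivial candidate $b$ satisfies $bP_1 + 2P_2 = O$; combined with independence this pushes $b$ to be a multiple related to $m/2$, available only for even $m$. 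Thus for odd $m \geq 5$ the only surviving $\s$ is $\Id$, giving $K_m = K(x_1,\z_m,y_1,y_2)$. For even $m$, the single extra automorphism is the one with $\s(P_2)=\frac{m}{2}P_1+P_2$ (note $\s^2 = \Id$ since $2\cdot\frac{m}{2}P_1 = mP_1 = O$), and I must verify it genuinely fixes $y_2$: the point $\frac{m}{2}P_1+P_2$ differs from $P_2$ by the $2$-torsion point $\frac{m}{2}P_1$, and one checks via the explicit addition formula that adding a $2$-torsion point of the form $\frac{m}{2}P_1$ leaves $y_2$ unchanged precisely when the relevant tangent/chord degenerates — this needs a genuine computation with the duplication-type formula, and is the delicate point.

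\medskip

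Finally, for the containment $K_{m/2} \subseteq K(x_1,\z_m,y_1,y_2)$ when $m$ is even, I would argue that the extra automorphism $\s$, acting as $P_2 \mapsto \frac{m}{2}P_1+P_2$, fixes $K_{m/2}$: on the $\frac{m}{2}$-torsion the matrix $\s$ reduces modulo $\frac{m}{2}$ to the identity, since $\frac{m}{2}P_1$ becomes trivial mod $\frac{m}{2}$. Hence $\s$ acts trivially on $\E[m/2]$, so $K_{m/2}$ lies in the fixed field of $G = \la \s\ra$, which is exactly $K(x_1,\z_m,y_1,y_2)$. I expect the main difficulty to lie in the even case: certifying that the unique nontrivial $\s$ both fixes $y_2$ and is forced to have the stated form requires carefully using that $\frac{m}{2}P_1$ is the only nonzero $2$-torsion available in $\la P_1\ra$, and ruling out other translates via the group-law identity relating $y(Q+T)$ to $y(Q)$ for a $2$-torsion point $T$.
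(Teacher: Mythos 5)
Your reduction to $\s=\left(\begin{smallmatrix} 1 & b \\ 0 & 1 \end{smallmatrix}\right)$ via the Weil pairing matches the paper, and your argument for the containment $K_{m/2}\subseteq K(x_1,\z_m,y_1,y_2)$ is essentially the paper's. But the core of your proof rests on a false lemma: it is \emph{not} true that two points of $\E$ with equal $y$-coordinate are equal or negatives of each other. Negation fixes the $x$-coordinate and flips the $y$-coordinate, so $-P_2=(x_2,-y_2)$ is not even a candidate here (as $y_2\neq 0$ for $m\geq 4$); what is true is that the horizontal line $y=y_2$ meets the curve in \emph{three} points, namely the roots of $X^3+AX+B-y_2^2$, and these need not be related by $\pm$. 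Consequently your dichotomy ``$bP_1+P_2\in\{P_2,-P_2\}$'' is unjustified, and the genuinely dangerous case is never addressed: when $3\mid m$ an element of order $3$ with $\s(P_2)=bP_1+P_2$, $3b\equiv 0\pmod m$ (e.g.\ $b=m/3$), is a priori possible, with $P_2$, $\s(P_2)$, $\s^2(P_2)$ being precisely the three distinct points sharing the $y$-coordinate $y_2$. The paper closes exactly this door: since $x_2$ is a root of the cubic $X^3+AX+B-y_2^2$ over $K(x_1,\z_m,y_1,y_2)$, the extension has degree at most $3$, so $\s$ has order at most $3$; and if the order were $3$, the three distinct collinear points would sum to $O$, giving $O=3\alpha P_1+3P_2=3P_2$, contradicting $m\geq 4$. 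Without this collinearity argument (or a substitute), your proof fails for all $m$ divisible by $3$, which includes infinitely many odd $m$ covered by the theorem.

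A secondary misreading: you state that in the even case you ``must verify'' by an addition-formula computation that the candidate $\s$ with $\s(P_2)=\frac{m}{2}P_1+P_2$ genuinely fixes $y_2$. No such verification is needed or possible in general: the identity $y\!\left(\frac{m}{2}P_1+P_2\right)=y_2$ is not a universal consequence of the group law, and the theorem makes no existence claim. It is an equivalence: \emph{if} the extension is nontrivial, then its degree is $2$ and its generator is that element (forward direction, which is what the order analysis gives), and the converse is vacuous. So the ``delicate point'' you flag is not part of the statement, while the genuinely delicate point --- eliminating order $3$ --- is the one your false dichotomy skips over.
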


\begin{proof}
Let $\s\in \Gal(K_m/K(x_1,\z_m,y_1,y_2))$ and write $\s(P_2)=\alpha P_1 + \beta P_2\,$ for some integers
$0\leqslant \alpha, \beta\leqslant m-1$. Since $P_1$ and $\z_m$ are $\s$-invariant we get
\[ \z_m = \s(\z_m) = \s(e_m(P_1,P_2)) = \z_m^\beta \ ,\]
yielding $\beta=1$ and $\s(P_2)=\alpha P_1+P_2\,$. Since $K_m=K(x_1,\z_m,y_1,y_2,x_2)$ and
$x_2$ is a root of $X^3+AX+B-y_2^2\,$, the order of $\s$ is at most $3$. Assume now that $\s\neq \Id$.\\
\emph{If the order of $\s$ is $3$}: we have
\[ P_2=\s^3(P_2)=3\alpha P_1+P_2 \]
hence $3\alpha \equiv 0\pmod{m}$. Moreover, the three distinct points $P_2\,$, $\s(P_2)$ and $\s^2(P_2)$ are on the line $y=y_2\,$.
Thus their sum is zero, i.e.,
\[ O=P_2+\s(P_2)+\s^2(P_2)=3\alpha P_1 +3P_2 \ .\]
Since $3\alpha \equiv 0\pmod{m}$, we deduce $3P_2=O$, contradicting $m\geqslant 4$.\\
\emph{If the order of $\s$ is $2$}: as above $P_2=\s^2(P_2)$ yields $2\alpha \equiv 0 \pmod{m}$. If $m$ is odd this
implies $\alpha\equiv 0 \pmod{m}$, i.e., $\s$ is the identity on $\mathcal E[m]$, a contradiction. If $m$ is even the only
possibility is $\alpha=\frac{m}{2}$.\\
The last statement for $m$ even follows from the fact that $\s$ acts trivially on $2P_1$ and $2P_2\,$.
\end{proof}

\begin{corollary}
Let $p\geqslant 5$ be prime, then $[K_p:K(\z_p,y_1,y_2)]$ is odd.
\end{corollary}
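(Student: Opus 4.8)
The plan is to translate the statement into group theory and to show that the group $G:=\Gal(K_p/K(\z_p,y_1,y_2))$ has no element of order $2$. Since $K_p/K$ is Galois and $K(\z_p,y_1,y_2)$ is an intermediate field, we have $|G|=[K_p:K(\z_p,y_1,y_2)]$, so the absence of an involution in $G$ forces this degree to be odd by Cauchy's theorem (if the order were even, $G$ would contain an element of order $2$). This reduces the corollary to a local computation about the matrices representing the elements of $G$.

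First I would record the constraints on a generic $\s\in G$. Because $\s$ fixes $\z_p=e_p(P_1,P_2)$, the formula $\s(\z_p)=\z_p^{\det(\s)}$ from Section \ref{prelim} gives $\det(\s)=1$, so that $G\subseteq \SL_2(\Z/p\Z)$. The key point is then to classify the involutions of $\SL_2(\Z/p\Z)$. Since $p$ is odd, the polynomial $X^2-1=(X-1)(X+1)$ has two distinct roots in $\Z/p\Z$, so any $\s$ with $\s^2=\Id$ is diagonalizable with eigenvalues in $\{1,-1\}$; the condition $\det(\s)=1$ forces the two eigenvalues to coincide, whence $\s=\pm\Id$. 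Thus $-\Id$ is the only element of order $2$ in $\SL_2(\Z/p\Z)$.

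It remains to rule out $-\Id\in G$. This automorphism sends $P_1$ to $-P_1$; as $p\geqslant 5$ is prime, $P_1$ has order $p>2$ and is not $2$-torsion, so $-P_1\neq P_1$. Since $-P_1$ and $P_1$ share the $x$-coordinate $x_1$ they must differ in their $y$-coordinate, so $-\Id$ does not fix $y_1$ and therefore $-\Id\notin G$. Consequently $G$ contains no involution and $[K_p:K(\z_p,y_1,y_2)]=|G|$ is odd.

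I expect the only delicate point to be the classification of order-$2$ elements of $\SL_2(\Z/p\Z)$, where the hypothesis that $p$ is odd is genuinely needed (for $p=2$ there are unipotent involutions and the statement breaks down); everything else is routine bookkeeping with the determinant identity and the observation that a point of odd prime order cannot have a $y$-coordinate shared with its inverse. As a byproduct, combining this with Theorem \ref{ordinates1}, which gives $K_p=K(x_1,\z_p,y_1,y_2)$ for odd $p\geqslant 5$, the element $x_1$ is a root of the cubic cut out by setting $y=y_1$ in the Weierstrass equation, so $[K_p:K(\z_p,y_1,y_2)]$ divides $3$; together with the oddness just proved, this pins the degree down to $1$ or $3$.
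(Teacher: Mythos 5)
Your proof is correct, and it reaches the conclusion by a genuinely different and more economical route than the paper. The paper also reduces to excluding an involution $\s\in\Gal(K_p/K(\z_p,y_1,y_2))$, but then argues geometrically: since $\s$ fixes $y_i\neq 0$, the point $\s(P_i)+P_i$ is a nontrivial $p$-torsion point on the line $y=-y_i$; if it is independent from $P_j$, the authors pass to the new basis $\{P_j,\s(P_i)+P_i\}$ and apply Theorem \ref{ordinates1} to that basis to exhibit a full generating set of $K_p$ ($\z_p$, $\tilde{x}_i$, $y_1$, $y_2$) fixed by $\s$, forcing $\s=\Id$; in the remaining case $\s(P_1)=-P_1+\beta_1P_2$, $\s(P_2)=\beta_2P_1-P_2$, and $\s^2=\Id$ forces $\beta_1=\beta_2=0$, so $\s=-\Id$, contradicting $\s(y_i)=y_i$. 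You instead use the Weil-pairing identity $\s(\z_p)=\z_p^{\det(\s)}$ to place the whole group inside $\SL_2(\Z/p\Z)$ and then invoke the elementary classification of involutions there: for odd $p$ the polynomial $X^2-1$ is separable, so an involution is diagonalizable with eigenvalues $\pm1$, and $\det=1$ forces $\s=\pm\Id$; finally $-\Id$ moves $y_1$ because $P_1$ is not $2$-torsion. This is shorter, self-contained linear algebra, avoids both the basis-change trick and any appeal to Theorem \ref{ordinates1}, and isolates exactly where oddness of $p$ enters (for $p=2$ unipotent involutions exist and the statement fails) --- whereas the paper's argument buys a second illustration of the basis-independence of its main theorem, which is very much the theme of that section. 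One small imprecision in your closing aside: the cubic $X^3+AX+B-y_1^2$ only gives $[K_p:K(\z_p,y_1,y_2)]\leqslant 3$ (a priori $1$, $2$ or $3$), not that the degree divides $3$; it is only after combining with the oddness you proved that the degree is pinned to $1$ or $3$, which is indeed the conclusion you state.
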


\begin{proof}
Assume there is a $\s\in \Gal(K_p/K(\z_p,y_1,y_2))$ of order $2$. For $i\in\{1, 2\}$,
since $y_i\neq 0$ (because $p\neq 2$), one has $\s(P_i)\neq -P_i$ and $\s(P_i)+P_i$ is a nontrivial $p$-torsion point
lying on the line $y=-y_i\,$. If $\s(P_i)+P_i$ is not a multiple of $P_j$ ($i\neq j$); then the set
$\{P_j,\s(P_i)+P_i\}$ is a basis of $\E[p]$. Let $\s(P_i)+P_i=:(\tilde{x}_i, -y_i)$; then by Theorem \ref{ordinates1},
we have $K(\z_p, \tilde{x}_i,y_1,y_2)=K_p\,$.
But $\s$ acts trivially on $\z_p\,$, $y_1$ and $y_2$ by definition and on $\tilde{x}_i$ as well (because
$\s(\s(P_i)+P_i)=P_i+\s(P_i)\,$). Hence $\s$ fixes $K_p$ which contradicts $\s\neq\Id$.

\noindent Therefore $\s(P_1)=-P_1+\b_1P_2$ and $\s(P_2)=\b_2P_1-P_2$ which, together with $\s^2=\Id$, yield $\b_1=\b_2=0$.
Hence both $P_1$ and $P_2$ are mapped to their opposite: a contradiction to $\s(y_i)=y_i\,$.
\end{proof}

\begin{remark}\label{Eordinates1}
The equality $K_2=K(x_1,\z_2,y_1,y_2)$ does not hold in general. A counterexample is again provided by the curve
$\E:\,y^2=x^3-1$ with $P_1=(1, 0)$ (as in Remark \ref{Edihedral} any other choice would yield the equality $K_2=K(x_1)\,$).
\end{remark}

\noindent Before going to the main theorem we show a little application for primes $p\equiv 2\pmod 3$.

\begin{theorem} \label{ordinates1+}
Let $p\equiv 2\pmod 3$ be an odd prime, then $K_p=K(x_1,y_1,y_2)$ or $K_p=K(x_1,y_1,\z_p)$.
\end{theorem}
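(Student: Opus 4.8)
The plan is to run the argument inside $\Gal(K_p/F)$ with $F:=K(x_1,y_1)$, building on the reduction already available. Since $p\geq 5$ is odd, Theorem~\ref{ordinates1} gives $K_p=K(x_1,\z_p,y_1,y_2)=F(\z_p,y_2)$. Every $\s\in G:=\Gal(K_p/F)$ fixes $P_1=(x_1,y_1)$, so in the basis $\{P_1,P_2\}$ it is upper triangular, $\s=\left(\begin{smallmatrix} 1 & \alpha \\ 0 & \delta \end{smallmatrix}\right)$ with $\delta=\det(\s)\in(\Z/p\Z)^*$ the $P_2$-coefficient (matching the convention $\s(\z_p)=\z_p^{\det(\s)}=\z_p^{\delta}$). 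Hence $H_1:=\Gal(K_p/F(\z_p))$ is exactly $G\cap\{\delta=1\}$, a subgroup of the additive group $U:=\{\left(\begin{smallmatrix} 1 & c \\ 0 & 1 \end{smallmatrix}\right):c\in\Z/p\Z\}\cong\Z/p\Z$. As $p$ is prime, $H_1$ is either trivial or all of $U$. If $H_1$ is trivial then $K_p=F(\z_p)=K(x_1,y_1,\z_p)$, which is the second alternative.

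It remains to treat the case $U\subseteq G$, where I aim to show $H_2:=\Gal(K_p/F(y_2))=1$, giving $K_p=K(x_1,y_1,y_2)$. First I would bound the order of a nontrivial $\tau\in H_2$: it fixes $P_1$ and $y_2$, so (since $\tau$ is determined by its value on $P_2$, its order equals the orbit length of $P_2$) the orbit $\{\tau^k(P_2)\}$ lies on the line $y=y_2$, which meets $\E$ in at most three points; thus $\tau$ has order at most $3$. This is exactly where the hypothesis enters: an order-$3$ element of $G$ would have $\delta\neq 1$ with $\delta^3=1$, i.e.\ $\delta$ a primitive cube root of unity, which is impossible when $p\equiv 2\pmod 3$ because $3\nmid p-1$. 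So the surviving $\tau$ has order $2$, whence $\tau=\left(\begin{smallmatrix} 1 & \alpha \\ 0 & -1\end{smallmatrix}\right)$; and $\tau(y_2)=y_2$ forces $\alpha\neq 0$, since $\alpha=0$ sends $P_2\mapsto-P_2$, i.e.\ $y_2\mapsto-y_2\neq y_2$.

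The crux, which I expect to be the main obstacle, is ruling out this remaining involution, and the idea is to feed $\tau$ back into $U\subseteq G$. The product $\tau\cdot\left(\begin{smallmatrix} 1 & -\alpha \\ 0 & 1\end{smallmatrix}\right)=\left(\begin{smallmatrix} 1 & 0 \\ 0 & -1\end{smallmatrix}\right)=:\s_0$ again lies in $G$. The three points on the line $y=y_2$ are $P_2$, $\tau(P_2)=\alpha P_1-P_2$, and, by the sum-to-$O$ relation, $T:=-\alpha P_1$, a nonzero multiple of $P_1$ because $\alpha\neq 0$. Since $\s_0$ fixes $P_1$ it fixes $T$, hence fixes $y(T)=y_2$, so $\s_0(y_2)=y_2$; but $\s_0(P_2)=-P_2$ gives $\s_0(y_2)=-y_2$, forcing $y_2=-y_2$ and contradicting $y_2\neq 0$ (valid as $p\neq 2$). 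Therefore $H_2=1$ and the dichotomy is complete. The only points needing careful verification are that the Galois action commutes with the group law (so orbits stay on the line and multiples of $P_1$ are $\s_0$-fixed) and the elementary order computations in the Borel subgroup; the rest is bookkeeping.
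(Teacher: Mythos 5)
Your proof is correct, and it reaches the dichotomy by a genuinely different route than the paper's. The paper argues by degrees: since $x_2$ is a root of $X^3+AX+B-y_2^2$ over $K(y_2)$, one has $[K_p:K(x_1,y_1,y_2)]\leqslant 3$; Theorem \ref{ordinates1} gives $K_p=K(x_1,y_1,y_2)(\z_p)$, so this degree also divides $p-1$, and $p\equiv 2\pmod 3$ leaves only $1$ or $2$; in the residual degree-$2$ case the nontrivial automorphism is exactly your involution $\left(\begin{smallmatrix}1&b\\0&-1\end{smallmatrix}\right)$ with $b\neq 0$, and the paper reads the collinearity relation \emph{positively}: the third point of $\E$ on the relevant line is a nonzero multiple of $P_1$, hence $y_2\in K(x_1,y_1)$ and the second alternative holds. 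You instead split on $H_1=\Gal(K_p/F(\z_p))$ (with your $F=K(x_1,y_1)$), a subgroup of $U\cong\Z/p\Z$ and so trivial or full by primality: triviality gives $K_p=F(\z_p)$ outright, while $H_1=U$ lets you compose $\tau$ with a unipotent to manufacture $\s_0=\left(\begin{smallmatrix}1&0\\0&-1\end{smallmatrix}\right)\in G$ and read the same collinearity relation \emph{negatively} ($\s_0$ fixes $-\alpha P_1$, hence fixes its ordinate $y_2$, yet sends $y_2$ to $-y_2$), killing $H_2$ and yielding the first alternative; I checked the composition ($\tau\cdot\left(\begin{smallmatrix}1&-\alpha\\0&1\end{smallmatrix}\right)=\s_0$ under the paper's conventions) and the identification of the third point $-\alpha P_1$, and both are right. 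The congruence hypothesis enters both proofs to exclude order $3$, but through different doors: the paper via $3\nmid[\Q(\z_p):\Q]$, you via the absence of elements of order $3$ in $(\Z/p\Z)^*$ — the same fact, but your use is purely matrix-theoretic. Two observations. First, your citation of Theorem \ref{ordinates1} is actually decorative: you never use $K_p=F(\z_p,y_2)$, since $H_1=1$ gives $K_p=F(\z_p)$ and $H_2=1$ gives $K_p=F(y_2)$ directly by Galois correspondence, so your argument is self-contained where the paper's is not. Second, the paper's route buys a slightly stronger, constructive conclusion: in its degree-$2$ case it exhibits $y_2$ as the ordinate of a multiple of $P_1$, so $y_2\in K(x_1,y_1)$ — information your contradiction discards. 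For a polished write-up you should record two facts used silently: $\Gal(K_p/K)$ acts faithfully on $\E[p]$ (so the order of $\tau$ really equals the orbit length of $P_2$, and orbit length at most $3$ bounds the order), and a nontrivial element of $H_2$ with $\delta=1$ is impossible since it would lie in $U$ and have order $p>3$ — both immediate, but they carry the "order at most $3$, hence order $2$ with $\delta=-1$" step.
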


\begin{proof}
The degree of $x_2$ over $K(y_2)$ is at most $3$, hence $[K_p:K(x_1,y_1,y_2)]\leqslant 3$.
By Theorem \ref{ordinates1} we have the equality $K_p=K(x_1,\z_p,y_1,y_2)$ and the hypothesis ensures that $[\Q(\z_p):\Q]$
is not divisible by $3$, so the same holds for $[K_p:K(x_1,y_1,y_2)]$.
Thus either $K_p=K(x_1,y_1,y_2)$ or $[K_p:K(x_1,y_1,y_2)]=2$. If the second case occurs, then take the nontrivial element $\s$
of $\Gal(K_p/K(x_1,y_1,y_2))$. Since $\s$ fixes $x_1,\,y_1$ and $y_2$, it can be written as
\[ \s=\left( \begin{array}{cc} 1 & b \\ 0 & d \\ \end{array} \right)\quad {\rm with} \quad
\s^2=\left( \begin{array}{cc} 1 & b(1+d) \\ 0 & d^2 \\ \end{array} \right) \ .\]
Since $p$ is an odd prime, then $\s^2=\Id$ leads either to $d=1$ (hence $b=0$ and $\s=\Id$, a contradiction) or to $d=-1$.
Hence $\s(P_2)=bP_1-P_2$ (with $b\neq 0$ otherwise $\s$ would fix $x_2$ as well), i.e.,
$bP_1$ lies on the line $y=-y_2\,$. Thus $K(y_2)\subseteq K(x_1,y_1)$ and so $K_p=K(x_1,y_1,\z_p)$.
\end{proof}

\begin{corollary} \label{monogenic}
Let $p\equiv 2\pmod 3$ be an odd prime. Assume that $\E$ has a $K$-rational torsion point $P_1$ of order $p$.
Then either $K_p=K(\z_p)$ or $K_p=K(y_2)$.
\end{corollary}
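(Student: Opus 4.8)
The plan is to obtain this as an immediate specialization of Theorem \ref{ordinates1+}. First I would record the effect of the $K$-rationality hypothesis on the coordinates. Since $P_1$ is a $K$-rational point of order $p$, and $\E[p]\cong(\Z/p\Z)^2$, any nonzero element lies in some $\Z$-basis, so we may take $P_1$ as the first vector of a basis $\{P_1,P_2\}$; this is exactly the setup of Theorem \ref{ordinates1+}. The rationality of $P_1$ gives $x_1=x(P_1)\in K$ and $y_1=y(P_1)\in K$, hence $K(x_1,y_1)=K$.

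Next I would feed this into the two generating sets produced by Theorem \ref{ordinates1+}. Under the standing hypotheses ($p$ odd, $p\equiv 2\pmod 3$), that theorem asserts $K_p=K(x_1,y_1,y_2)$ or $K_p=K(x_1,y_1,\z_p)$. Because $K(x_1,y_1)=K$, these two sets collapse to $K(x_1,y_1,y_2)=K(y_2)$ and $K(x_1,y_1,\z_p)=K(\z_p)$ respectively. Substituting yields $K_p=K(y_2)$ or $K_p=K(\z_p)$, which is precisely the claim.

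There is essentially no hard step here: the corollary is a direct reduction of Theorem \ref{ordinates1+} using that a rational $P_1$ contributes nothing to the field of definition. The only point that deserves a word of care is the legitimacy of choosing this specific $P_1$ as a basis vector, so that the hypotheses of Theorem \ref{ordinates1+} apply verbatim; as noted above, this holds because every nonzero point of $\E[p]$ extends to a $\Z$-basis. I would therefore expect the write-up to be just a couple of lines invoking Theorem \ref{ordinates1+} and the equality $K(x_1,y_1)=K$.
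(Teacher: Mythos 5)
Your proposal is correct and is exactly the argument the paper intends: the corollary follows from Theorem \ref{ordinates1+} by taking the $K$-rational point $P_1$ as the first basis vector (legitimate since any nonzero point of $\E[p]$ extends to a $\Z$-basis), so that $K(x_1,y_1)=K$ and the two generating sets collapse to $K(y_2)$ and $K(\z_p)$. The paper leaves this specialization implicit, and your two-line write-up fills it in precisely as expected.
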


%\begin{proof}
%Just apply Theorem \ref{ordinates1+} to a basis $\{P_1,\,P_2\}$.
%\end{proof}

%\begin{remark} In \cite{Mer} Merel proved that if $\Q(\E[p])=\Q(\z_p)$, then $p\in \{2,3,5\}$. Thus, in the
%hypotheses of Corollary \ref{monogenic}, if $K=\Q$ and $p>5$, then $K_p=K(y_2)$.
%\end{remark}

\noindent We are now ready to prove the equality appearing in the title of this section.

\begin{theorem}\label{ordinates1++}
If $m\geqslant 4$ and $K_m=K(x_1,\z_m,y_1,y_2)$, then $K_m=K(x_1,\z_m,y_2)$ (in particular this holds
for any odd $m\geq 5$, by Theorem \ref{ordinates1}).
\end{theorem}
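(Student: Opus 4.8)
The plan is to show that $y_1$ is redundant, i.e. that $y_1\in F:=K(x_1,\z_m,y_2)$. Since $y_1^2=x_1^3+Ax_1+B\in K(x_1)\subseteq F$ and the hypothesis $K_m=K(x_1,\z_m,y_1,y_2)$ means precisely $K_m=F(y_1)$, the extension $K_m/F$ has degree at most $2$, so it suffices to exclude degree $2$. I would argue by contradiction: assume $[K_m:F]=2$ and let $\s$ be the nontrivial element of $\Gal(K_m/F)$. Because $K_m=F(y_1)$ with $y_1^2\in F$, necessarily $\s(y_1)=-y_1$, whereas $\s$ fixes $x_1$, $\z_m$ and $y_2$.

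Next I would read off the matrix of $\s$ in the basis $\{P_1,P_2\}$. Fixing $x_1$ and negating $y_1$ gives $\s(P_1)=-P_1$, so the first column is $(-1,0)^{t}$; writing $\s(P_2)=aP_1+bP_2$ and using $\s(\z_m)=\z_m^{\det(\s)}=\z_m$ forces $\det(\s)=1$, whence $b=-1$. Thus $\s=\bigl(\begin{smallmatrix} -1 & a \\ 0 & -1 \end{smallmatrix}\bigr)$ and $\s(P_2)=aP_1-P_2$. Since $\s$ has order $2$, the computation $\s^2=\bigl(\begin{smallmatrix} 1 & -2a \\ 0 & 1 \end{smallmatrix}\bigr)=\Id$ yields $2a\equiv 0\pmod m$.

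Now I would exploit $\s(y_2)=y_2$. If $a\equiv 0$, then $\s=-\Id$ and $\s(P_2)=-P_2=(x_2,-y_2)$, contradicting $\s(y_2)=y_2$ (note $y_2\neq 0$, as $P_2$ has order $m\geq 3$ and is therefore not $2$-torsion). When $m$ is odd, $2a\equiv 0$ already forces $a\equiv 0$, so this settles the odd case of the implication; combined with the equality $K_m=K(x_1,\z_m,y_1,y_2)$ supplied by Theorem \ref{ordinates1}, it gives the unconditional statement for odd $m\geq 5$. For $m$ even the only remaining possibility is $a=\tfrac m2$, so $\s(P_2)=\tfrac m2 P_1-P_2$ and hence $\s(P_2)+P_2=\tfrac m2 P_1$.

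The crux — and the step I expect to be the main obstacle — is deriving a contradiction in this even subcase. The point $\tfrac m2 P_1$ is a nontrivial $2$-torsion point (it is nonzero since $\tfrac m2\not\equiv 0\pmod m$, and $m\cdot\tfrac 12 P_1=O$), so it has $y$-coordinate $0$. On the other hand $\s(P_2)$ and $P_2$ are \emph{distinct} points (otherwise $\tfrac m2 P_1=2P_2$, impossible by independence of $P_1,P_2$) that share the $y$-coordinate $y_2$; hence the secant line joining them is the horizontal line $y=y_2$, and the group law gives that their sum $\s(P_2)+P_2$ is the reflection of the third intersection point, so it has $y$-coordinate $-y_2\neq 0$. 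This contradiction forces $[K_m:F]=1$, i.e. $K_m=K(x_1,\z_m,y_2)$. The delicate points to handle carefully are the group-law/collinearity bookkeeping for $m$ even and the non-degeneracy facts ($y_2\neq 0$ and $\s(P_2)\neq P_2$) that keep the argument from collapsing.
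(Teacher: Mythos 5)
Your proposal is correct and takes essentially the same route as the paper's proof: bound $[K_m:K(x_1,\z_m,y_2)]$ by $2$ using $y_1^2\in K(x_1)$, identify a putative nontrivial $\s$ as $\bigl(\begin{smallmatrix}-1 & a\\ 0 & -1\end{smallmatrix}\bigr)$ via the Weil pairing and $\s^2=\Id$, and eliminate $a\equiv 0$ and $a\equiv \frac m2 \pmod m$, the latter by the same collinearity argument showing the $2$-torsion point $\frac m2 P_1=\s(P_2)+P_2$ would force $y_2=0$. The only (harmless) local divergence is in the case $a\equiv 0$: the paper deduces $x_2\in K(x_1,\z_m,y_2)$ and invokes Theorem \ref{zetaversusy}, whereas you note directly that $\s=-\Id$ would send $y_2\mapsto -y_2$, contradicting $y_2\neq 0$ -- a slightly more elementary finish.
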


\begin{proof}
The hypotheses imply $K_m=K(x_1,\z_m,y_2)(y_1)$ so $[K_m:K(x_1,\z_m,y_2)]\leqslant 2$.
Take $\s\in \Gal(K_m/K(x_1,\z_m,y_2))$, then $\s(x_1)=x_1$ yields $\s(P_1)=\pm P_1\,$. If $\s(P_1)=P_1\,$,
then $y_1\in K(x_1,\z_m,y_2)$ and $K_m=K(x_1,\z_m,y_2)$. Assume that $\s(P_1)=-P_1$ and let
\[ \s=\left(\begin{array}{cc} -1 & a \\ 0 & b \end{array} \right) \ .\]
Using the Weil pairing (recall $\z_m:=e_m(P_1,P_2)\,$), we have
$\z_m= \s(\z_m)= \z_m^{-b}\,$,
which yields $b\equiv -1 \pmod{m}$, while
\[ \s^2=\left(\begin{array}{cc} 1 & -2a \\ 0 & 1 \end{array} \right)\,=\,\Id \]
leads to $\-2a \equiv 0\pmod{m}$.

\noindent {\em Case $a\equiv 0 \pmod{m}$}: we have $\s=-\Id$. Then $\s(P_2)=-P_2\,$, i.e.,
$\s(x_2)=x_2\in K(x_1,\z_m,y_2)$. By Theorem \ref{zetaversusy}, this yields $K_m= K(x_1,\z_m,y_2)$
and contradicts $\s\neq \Id$.
%(alternative: $\s(P_2)=-P_2$ yields $y_2=\s(y_2)=-y_2\,$, i.e., $y_2=0$ so $P_2$ has order 2
%which contradicts $m\geqslant 4$).

\noindent {\em Case $a\equiv \frac{m}{2} \pmod{m}$}: we have $\s(P_2)=\frac{m}{2}P_1-P_2\,$,
i.e., $\s(P_2)+P_2-\frac{m}{2}P_1=O$. Since $P_2$ and $\s(P_2)$ lie on the line $y=y_2$ and are distinct, then $-\frac{m}{2}P_1$
must be the third point of $\E$ on that line. Since $-\frac{m}{2}P_1$ has order 2 this yields $y_2=0$,
contradicting $m\geqslant 4$.
\end{proof}

To provide generators for a more general $m$ one can also use the following lemma.

\begin{lemma}\label{LemRey}\ \begin{itemize}
\item[{\bf 1.}] Assume that $P \in E(K)$ is not a $2$-torsion point and that $\phi: E \to E$ is a $K$-rational isogeny with $\phi(R)=P$.
Then $K(x(R), y(R))=K(x(R))$.
\item[{\bf 2.}] If $R$ is a point in $\E(\ov{K})$ and $n\geqslant 1$, then we have $x(nR)\in K(x(R))$.
\end{itemize} \end{lemma}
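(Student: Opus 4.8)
The plan is to treat both parts through a single, elementary observation about the action of $\Gal(\ov{K}/K(x(R)))$ on the point $R$. Any $\s\in\Gal(\ov{K}/K(x(R)))$ fixes $x(R)$ by definition, so $\s(R)$ and $R$ share the same $x$-coordinate; since on a Weierstrass model two points have the same $x$-coordinate precisely when they are equal or opposite, I get the dichotomy $\s(R)=\pm R$ for every such $\s$. Both statements then reduce to controlling the unwanted case $\s(R)=-R$: in part \textbf{1} I must rule it out, and in part \textbf{2} I must show it causes no harm.

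For part \textbf{1}, note first that $y(R)^2=x(R)^3+Ax(R)+B\in K(x(R))$ from the curve equation, so $[K(x(R),y(R)):K(x(R))]\leqslant 2$ and it suffices to prove $y(R)\in K(x(R))$, i.e. that every $\s\in\Gal(\ov{K}/K(x(R)))$ fixes $R$ rather than sending it to $-R$. Suppose $\s(R)=-R$. The key step is that $\phi$, being a $K$-rational isogeny, is a group homomorphism commuting with the Galois action, while $P=\phi(R)\in\E(K)$ is $\s$-fixed; hence
\[ P=\s(P)=\s(\phi(R))=\phi(\s(R))=\phi(-R)=-\phi(R)=-P, \]
so $2P=O$, contradicting the hypothesis that $P$ is not a $2$-torsion point. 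Thus $\s(R)=R$ for all such $\s$, giving $y(R)\in K(x(R))$ and the claimed equality. I expect this commutativity of $\s$ with $\phi$ (together with the $K$-rationality of $P$) to be the only real content of the argument; everything else is formal.

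For part \textbf{2}, the cleanest route is again the dichotomy. Since multiplication by $n$ is defined over $K$, it commutes with every $\s$, so $\s(nR)=n\,\s(R)=\pm nR$; in either sign the $x$-coordinate is unchanged, whence $\s(x(nR))=x(\pm nR)=x(nR)$. Therefore $x(nR)$ is fixed by all of $\Gal(\ov{K}/K(x(R)))$ and lies in $K(x(R))$. To sidestep any separability subtlety in the Galois correspondence, I would also record the robust alternative: the explicit multiplication-by-$n$ formulas furnish polynomials $\phi_n,\psi_n\in K[x]$ (assembled from the division polynomials of $\E$) with $x(nR)=\phi_n(x(R))/\psi_n(x(R))^2$, exhibiting $x(nR)$ as an element of $K(x(R))$ directly. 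I anticipate no genuine obstacle here; the only points requiring care are the identities $x(\pm Q)=x(Q)$ and the $K$-rationality of $[n]$, both of which hold on any Weierstrass model.
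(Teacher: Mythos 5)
Your proof is correct, but on this lemma the paper itself offers no argument: it disposes of part \textbf{1} by citing \cite[Lemma 2.2]{Re} and declares part \textbf{2} well known, so your write-up supplies exactly the content the authors outsource. Your mechanism --- any $\s\in\Gal(\ov{K}/K(x(R)))$ satisfies $\s(R)=\pm R$, and the $K$-rationality of $\phi$ plus $P\in \E(K)$ turns the bad case into $P=\s(\phi(R))=\phi(\s(R))=\phi(-R)=-P$, contradicting $2P\neq O$ --- is the same idea that drives Reynolds' lemma, and your fallback for part \textbf{2} via the division-polynomial identity $x(nR)=\phi_n(x(R))/\psi_n(x(R))^2$ is precisely the ``well known'' fact intended; moreover it is the right move, since the naive fixed-field argument only shows that $x(nR)$ is purely inseparable over $K(x(R))$ when that field is imperfect, a caveat you correctly flag. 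Two small touch-ups would make your argument valid in the paper's full generality (arbitrary characteristic): in characteristic $2$ or $3$ replace the short Weierstrass equation by a long one, where the dichotomy $\s(R)=\pm R$ and the bound $[K(x(R),y(R)):K(x(R))]\leqslant 2$ still hold because $-R=(x,\,-y-a_1x-a_3)$; and in part \textbf{1} the Galois argument tacitly needs $K(x(R),y(R))/K(x(R))$ to be separable, which you get for free since $\phi(2R)=2P\neq O$ forces $R$ not to be $2$-torsion, hence $y(R)\neq y(-R)$ and the quadratic satisfied by $y(R)$ has distinct roots. Finally, a cosmetic point: for even $n$ it is $\psi_n^2$, not $\psi_n$ itself, that is a polynomial in $x$ alone, but this changes nothing in your conclusion.
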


\begin{proof}
Part {\bf 1} is \cite[Lemma 2.2]{Re} and part {\bf 2} is well known.
%Put $F=K(x(R))$ and $F'=K(x(R), y(R))$, then $[F':F] \leqslant 2$ and we take $\s\in\Gal(F'/F)$.
%Since $\s$ fixes $x(R)$, one has $\s(R)=\pm R$. Moreover $\s(\phi(R))-\phi(R)= O$ yields $\phi(\s(R)-R)=O$ as well.
%Now $\s(R)=-R$ would yield $O=\phi(-2R)=-2P$ a contradiction to $P\not\in \E[2]$. Hence $\s(R)=R$ and $F'=F$.
\end{proof}

%\begin{lemma}\label{contenimento-x}
%If $R$ is a point in $\E(\ov{K})$ and $n\geqslant 1$, then we have $x(nR)\in K(x(R))$.
%\end{lemma}

%\begin{proof}
%For any $\s\in \Gal(\ov{K}/K(x(P)))$ one has $\s(P)=\pm P$ and $\s(nP)=\pm nP$.
%Hence $\s(x(nP))=x(nP)$, i.e., $x(nP)\in K(x(P))$.
%\end{proof}

\begin{proposition}\label{corReynolds}
Let $m$ be divisible by $d\geqslant 3$ and let $R$ be a point of order $m$. Then
\[ K(x(R), y(R))=K\left(x(R), y\left(\frac{m}{d}R\right)\,\right) \ .\]
In particular, if $K=K(\E[d])$ and $R$ is a point of order $m$, then $K(x(R), y(R))=K(x(R))$.
\end{proposition}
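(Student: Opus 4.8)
The plan is to reduce the proposition to a direct application of Lemma~\ref{LemRey} after enlarging the base field appropriately. Write $n:=m/d$ and set $Q:=\frac{m}{d}R=nR$. Since $d\mid m$ we have $n\mid m$, so $\gcd(m,n)=n$ and $Q$ has order exactly $m/n=d$; in particular $Q$ is a point of order $d\geqslant 3$. The equality to be proved is $K(x(R),y(R))=K(x(R),y(Q))$.

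The inclusion $K(x(R),y(Q))\subseteq K(x(R),y(R))$ is immediate: the multiplication-by-$n$ map is defined over $K$, so the coordinates of $Q=nR$ are rational functions of $x(R)$ and $y(R)$ with coefficients in $K$, whence $y(Q)\in K(x(R),y(R))$.

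For the reverse inclusion I would work over the field $L:=K(x(R),y(Q))$. By Lemma~\ref{LemRey}(2) one has $x(Q)=x(nR)\in K(x(R))\subseteq L$, so both coordinates of $Q$ lie in $L$, i.e. $Q\in\E(L)$. Because $d\geqslant 3$, the point $Q$ has order at least $3$ and is therefore not a $2$-torsion point. I then apply Lemma~\ref{LemRey}(1) over the base field $L$, taking the isogeny $\phi$ to be multiplication-by-$n$ (which is $L$-rational) and the point $P:=Q=\phi(R)$. This gives $L(x(R),y(R))=L(x(R))=L$, hence $y(R)\in L=K(x(R),y(Q))$. Combining the two inclusions yields the asserted equality. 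The ``in particular'' clause is then immediate: if $K=K(\E[d])$ then $Q\in\E[d]$ has all its coordinates in $K$, so $y(Q)\in K$ and $K(x(R),y(Q))=K(x(R))$; the main equality becomes $K(x(R),y(R))=K(x(R))$.

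The only delicate point --- and thus the main obstacle --- is justifying the change of base field from $K$ to $L$ in the invocation of Lemma~\ref{LemRey}(1): one must verify that $Q$ really lies in $\E(L)$, which is precisely where Lemma~\ref{LemRey}(2) is used to force $x(Q)$ into $K(x(R))$, and one must verify that $Q$ is not $2$-torsion, which is exactly the role of the hypothesis $d\geqslant 3$.
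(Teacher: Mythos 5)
Your proof is correct and follows essentially the same route as the paper, whose entire proof reads ``Apply the previous lemma to the field $K(P)$, with $P=\frac{m}{d}R$ and $\phi=\left[\frac{m}{d}\right]$'' --- i.e.\ exactly your base-change-then-invoke-Lemma~\ref{LemRey}(1) argument. You merely make explicit the verifications the paper leaves implicit (that $x\left(\frac{m}{d}R\right)\in K(x(R))$ via Lemma~\ref{LemRey}(2), so the smaller field $K\left(x(R),y\left(\frac{m}{d}R\right)\right)$ already contains the auxiliary point, and that $d\geqslant 3$ rules out $2$-torsion), which is a welcome clarification rather than a deviation.
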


\begin{proof}
Apply the previous lemma to the field $K(P)$, with $P=\frac{m}{d}R$ and $\phi=\left[\frac{m}{d}\right]$.
\end{proof}

\begin{corollary}
Let $m$ be divisible by an odd number $d\geqslant 5$. Then
\[ K_m = K\left(x(P_1),x(P_2),\z_d,y\left(\frac{m}{d}P_2\right)\,\right) \ .\]
\end{corollary}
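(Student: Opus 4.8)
The plan is to combine the previous proposition with the key theorem on generating sets. The final corollary asserts that for $m$ divisible by an odd $d\geq 5$, we have $K_m = K(x_1,x_2,\z_d,y(\frac{m}{d}P_2))$. First I would observe that the containment of the right-hand side in $K_m$ is immediate: $x_1=x(P_1)$, $x_2=x(P_2)$, and $\z_d$ all lie in $K_m$ (since $\z_d$ is a power of $\z_m$ up to a primitive root of unity and ${\boldsymbol\mu}_d\subseteq{\boldsymbol\mu}_m\subset K_m$), and $\frac{m}{d}P_2$ is an $m$-torsion point so its $y$-coordinate is in $K_m$ as well. So the real content is the reverse containment.

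The strategy for the reverse containment is to show that $K_m$ is generated over $K(x_1,x_2,\z_d)$ by the single $y$-coordinate $y(\frac{m}{d}P_2)$, and the natural engine for this is Proposition \ref{corReynolds} applied to a suitable base field. Set $F:=K(x_1,x_2,\z_d)$. The crucial point is that $\z_d=e_d(dP_1/\gcd,\dots)$ — more carefully, one should note that adjoining $x_1$, $x_2$ and $\z_d$ gives us control of the full $d$-torsion via Theorem \ref{ordinates1++} (or Theorem \ref{ordinates1}) at level $d$: since $d\geq 5$ is odd, that theorem tells us $K_d=K(x(Q_1),\z_d,y(Q_2))$ for a basis $\{Q_1,Q_2\}$ of $\E[d]$, and one wants to arrange things so that $F$ already contains $K(\E[d])=K_d$. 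I would take $Q_1=\frac{m}{d}P_1$ and $Q_2=\frac{m}{d}P_2$, which form a $\Z$-basis of $\E[d]$; then by Lemma \ref{LemRey}(2), $x(Q_i)=x(\frac{m}{d}P_i)\in K(x(P_i))\subseteq F$, and $\z_d$ (suitably chosen as $e_d(Q_1,Q_2)$, a power of $\z_m$) is in $F$ by hypothesis. Applying Proposition \ref{corReynolds} to each $P_i$ over the field $K(\E[d])$ then yields $K(\E[d])(x(P_i),y(P_i))=K(\E[d])(x(P_i))$, so that once $K(\E[d])\subseteq F$ we get $y_i\in F(x_1,x_2)=F$, giving $K_m\subseteq F$ — but this would drop the $y$-coordinate entirely, which is stronger than claimed and suggests the intended reading keeps one $y$ because $F$ need not contain all of $K(\E[d])$.

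The subtlety, and the main obstacle, is exactly whether $F=K(x_1,x_2,\z_d)$ contains the whole field $K(\E[d])$. It need not: having $x_1,x_2$ gives the $x$-coordinates of $P_1,P_2$ but not of a full basis of $\E[d]$ unless we pass through $y$-information. So the honest route is to invoke the ``In particular'' clause of Proposition \ref{corReynolds} after first enlarging $F$ by one $y$-coordinate. Concretely, I would argue that $K_d=K(\E[d])\subseteq K(x_1,x_2,\z_d,y(\frac{m}{d}P_2))$: by Theorem \ref{ordinates1++} at level $d$ with basis $\{Q_1,Q_2\}=\{\frac{m}{d}P_1,\frac{m}{d}P_2\}$, we have $K_d=K(x(Q_1),\z_d,y(Q_2))$, and all three generators lie in the right-hand side (using Lemma \ref{LemRey}(2) for $x(Q_1)=x(\frac{m}{d}P_1)\in K(x_1)$). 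Hence over the field $K_d\subseteq F':=K(x_1,x_2,\z_d,y(\frac{m}{d}P_2))$, Proposition \ref{corReynolds} gives $K(x(P_i),y(P_i))=K_d(x(P_i))\subseteq F'$ for $i=1,2$, so $y_1,y_2\in F'$ and therefore $K_m=K(x_1,x_2,y_1,y_2)\subseteq F'$. Combined with the trivial containment this proves the equality. The one point requiring care is the consistency of the root-of-unity choices — that the $\z_d$ appearing as $e_d(Q_1,Q_2)$ agrees up to a primitive power with the $\z_d$ in the hypothesis — but since $K(\z_d)=K({\boldsymbol\mu}_d)$ is independent of that choice, this causes no difficulty.
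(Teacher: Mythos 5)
Your final argument is correct and is essentially the paper's own proof: both rest on the same three ingredients, namely the ``in particular'' clause of Proposition \ref{corReynolds} over the base field $K_d$, Theorems \ref{ordinates1} and \ref{ordinates1++} applied at level $d$ to the basis $\left\{\frac{m}{d}P_1,\frac{m}{d}P_2\right\}$ of $\E[d]$, and Lemma \ref{LemRey} giving $x\left(\frac{m}{d}P_1\right)\in K(x(P_1))$. The only difference is cosmetic ordering (the paper first writes $K_m=K_d(x_1,x_2)$ and then identifies $K_d$, while you first show $K_d$ lies in the candidate field), and your exploratory detour about whether $K(x_1,x_2,\z_d)$ alone contains $K_d$ is correctly discarded before the actual argument.
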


\begin{proof}
By Proposition \ref{corReynolds}, $K_m=K_d(x(P_1),x(P_2))$. Obviously $\left\{\frac{m}{d}P_1,\frac{m}{d}P_2\right\}$ is
a $\Z$-basis for $\E[d]$, hence Theorem \ref{ordinates1} and Theorem \ref{ordinates1++} (applied with $m=d$) yield
\[ K_d=K\left(x\left(\frac{m}{d}P_1\right),\z_d,y\left(\frac{m}{d}P_2\right)\,\right) \ .\]
By Lemma \ref{LemRey}, we have $x\left(\frac{m}{d}P_1\right)\in K(x(P_1))$ and the corollary follows.
\end{proof}

The previous result leaves out only integers $m$ of the type $2^s3^t$. For the case $t=1$
we mention the following

\begin{proposition}\label{computing}
Assume ${\rm char}(K)\neq 2,3$, then the coordinates of the points of order dividing $3\cdot 2^n$
can be explicitly computed by radicals out of the coefficients of the Weierstrass equation.
\end{proposition}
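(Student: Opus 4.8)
The plan is to reduce everything to two classical facts: that a cubic and a quartic over a field of characteristic $\neq 2,3$ are solvable by radicals (Cardano and Ferrari), and that the group law and the duplication map on $\E$ are given by rational functions in the coordinates. Note first that the hypothesis ${\rm char}(K)\neq 2,3$ is exactly what makes both tools available: it allows the short Weierstrass model $y^2=x^3+Ax+B$, it validates the duplication formula, and it is needed for the Cardano/Ferrari resolvents.

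Since $\gcd(3,2^n)=1$, I would first split the problem by the decomposition $\E[3\cdot 2^n]=\E[3]\oplus\E[2^n]$: writing $1=3u+2^n v$, any $P$ with $3\cdot 2^n P=O$ equals $P'+P''$ with $P'=2^n v\,P\in\E[3]$ and $P''=3u\,P\in\E[2^n]$. Because the addition law is rational in the coordinates of $P'$ and $P''$ (and ${\rm char}(K)\neq 2$ keeps the slope formulas valid), it suffices to produce the coordinates of the $3$-torsion and of the $2^n$-torsion points by radicals.

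For $\E[3]$ the $x$-coordinates are the roots of the $3$-division polynomial $\psi_3(x)=3x^4+6Ax^2+12Bx-A^2$, a quartic, hence solvable by radicals via Ferrari; the ordinates are then $y=\pm\sqrt{x^3+Ax+B}$. For $\E[2^n]$ I would argue by induction on $n$. The base case $n=1$ is the cubic $x^3+Ax+B=0$ (Cardano), with $y=0$. For the inductive step, every $R$ of order dividing $2^n$ satisfies $2R=P$ for some $P$ of order dividing $2^{n-1}$, whose coordinates are radical by hypothesis. Inverting the duplication formula
\[
x(2R)=\frac{x(R)^4-2A\,x(R)^2-8B\,x(R)+A^2}{4\bigl(x(R)^3+A\,x(R)+B\bigr)}
\]
shows that $x(R)$ is a root of a degree-$4$ polynomial over the (radical) field $K(P)$, solvable by Ferrari; then $y(R)=\pm\sqrt{x(R)^3+Ax(R)+B}$. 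This expresses all of $\E[2^n]$ by radicals, completing the induction and hence the proposition.

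The computations are routine, so the only point needing care is the halving step: one must check that inverting duplication genuinely produces a degree-$4$ equation in $x(R)$ (so that Ferrari applies) rather than something of higher degree, and that each halving keeps us inside a single radical tower. Conceptually the same conclusion can be read off from Galois theory: the image of $\Gal(K_{3\cdot 2^n}/K)$ in $\GL_2(\Z/3\cdot 2^n\Z)\simeq\GL_2(\Z/3\Z)\times\GL_2(\Z/2^n\Z)$ is solvable --- $\GL_2(\Z/3\Z)$ is solvable (its $\SL_2$ has center $\{\pm\Id\}$ with quotient $A_4$), and $\GL_2(\Z/2^n\Z)$ is an extension of $\GL_2(\FF_2)\simeq S_3$ by the $2$-group of matrices $\equiv\Id\pmod 2$ --- and a solvable extension whose degree is prime to ${\rm char}(K)$ lies in a radical tower. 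I would present the explicit halving argument as the main proof, since it yields the radical expressions directly from the coefficients $A,B$, and keep the Galois computation as the conceptual backdrop.
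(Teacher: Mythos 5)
Your proposal is correct and takes essentially the same route as the paper: reduce via the addition law to computing the $3$-torsion and $2^n$-torsion separately, solve the quartic $3$-division polynomial and the $2$-torsion cubic by radicals, and handle $2^n$-torsion for $n\geqslant 3$ by inverting the duplication formula, which yields a degree-$4$ equation for $x(P)$ over the field already generated by $x(2P)$. The only cosmetic difference is that the paper cites its explicit radical formulas from Sections \ref{Secm=3} and \ref{Secm=4} for the base cases where you invoke Cardano and Ferrari directly.
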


\begin{proof}
By the Weierstrass equation, we can compute the $y$-coordinates out of the $x$-coordinate. Then by the addition formula,
it suffices to compute the $x$-coordinate of two $\Z$-independent points of order $3$ (done in Section \ref{Secm=3}),
and the $x$-coordinate of two $\Z$-independent points of order $2^n$ (done in Section \ref{Secm=4} for $n=1,2$).
The coordinate $x(P)$ of a point $P$ of order $2^n$ (with $n\geqslant 3$)
can be computed from $x(2P)$. Indeed, we have $y(P)\neq 0$ (because the order of $P$ is not $2$)
and so, by the duplication formula,
\[ x(2P)=\frac{x(P)^4-2Ax(P)^2-8Bx(P)+A^2}{4x(P)^3+4Ax(P)+4B } \]
(a polynomial equation of degree $4$ with coefficients coming from the Weierstrass equation).
\end{proof}

\begin{proposition}
If $m$ is divisible by $3$ (resp. $4$), then
\[ K_{m}=K_{m,x}\cdot K(y(Q_1),y(Q_2)) \]
where $\{Q_1\,,Q_2\}$ is a  $\Z$-basis for $\E[3]$ (resp. $\E[4]$).
\end{proposition}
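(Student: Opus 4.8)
The plan is to view the compositum as a field trapped between $K_{m,x}$ and $K_m$ and to exploit the smallness of $\Gal(K_m/K_{m,x})$. Set $F:=K_{m,x}\cdot K(y(Q_1),y(Q_2))$. Since the $x(Q_i)$ and $y(Q_i)$ are coordinates of $m$-torsion points, we have $K_{m,x}\subseteq F\subseteq K_m$, so the entire question takes place inside $K_m/K_{m,x}$. By Lemma \ref{34versusy} we already know $K_m=K_{m,x}(y_1)$, hence $[K_m:K_{m,x}]\leqslant 2$ and therefore $[K_m:F]\leqslant 2$. If $K_m=K_{m,x}$ there is nothing to prove, so I may assume $\Gal(K_m/K_{m,x})$ has order $2$ and call its nontrivial element $\s$.

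First I would identify $\s$. As $\s$ fixes every $x$-coordinate of an $m$-torsion point, it carries each $P\in\E[m]$ to a point with the same abscissa, so $\s(P)\in\{P,-P\}$. Choosing a $\Z$-basis $\{P_1,P_2\}$ of $\E[m]$ and writing $\s(P_i)=\epsilon_i P_i$ with $\epsilon_i\in\{\pm 1\}$ (legitimate since each $P_i$ has order $m\geqslant 3$ and is not $2$-torsion), the requirement that $\s(P_1+P_2)$ again lie in $\{P_1+P_2,-(P_1+P_2)\}$ forces $\epsilon_1=\epsilon_2$; as $\s\neq\Id$ this gives $\s=-\Id$ on all of $\E[m]$.

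The decisive step is to check that $-\Id$ does not fix $F$. Because $\{Q_1,Q_2\}$ is a $\Z$-basis of $\E[3]$ (resp. $\E[4]$), the point $Q_1$ has order $3$ (resp. $4$), so it is not $2$-torsion and $-Q_1\neq Q_1$; since $x(-Q_1)=x(Q_1)$, the points $Q_1$ and $-Q_1$ can differ only in their ordinate, whence $y(-Q_1)\neq y(Q_1)$ --- an elementary observation valid in any characteristic prime to $m$. Thus $\s=-\Id$ moves $y(Q_1)\in F$, so $\s\notin\Gal(K_m/F)$. Hence $\Gal(K_m/F)$ is trivial and $K_m=F$, which is the assertion; the two cases $3\mid m$ and $4\mid m$ are handled at once, the only feature used being that $\E[3]$ and $\E[4]$ possess a basis of non-$2$-torsion points (which is exactly why $\E[2]$ would fail).

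I do not anticipate a real obstacle; the single point requiring care is the identification $\s=-\Id$, i.e. excluding the ``mixed-sign'' automorphisms that fix every abscissa, and this I would settle by testing $\s$ on the auxiliary point $P_1+P_2$ as above. In fact the argument shows the slightly stronger statement that already $K_m=K_{m,x}(y(Q_1))$, a single ordinate sufficing.
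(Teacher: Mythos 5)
Your proof is correct, but it takes a genuinely different route from the paper's. The paper disposes of this proposition in one line: apply Proposition \ref{corReynolds} with $d=3$ (resp.\ $d=4$), which itself rests on Reynolds' isogeny lemma (part {\bf 1} of Lemma \ref{LemRey}, used with $P=\frac{m}{d}R$ and $\phi=\left[\frac{m}{d}\right]$); thus each $y(P_i)$ lies in $K\left(x(P_i),y\left(\frac{m}{d}P_i\right)\right)$, and the order-$d$ ordinates lie in $K_d\subseteq K_{m,x}\cdot K(y(Q_1),y(Q_2))$. You instead compute $\Gal(K_m/K_{m,x})$ directly: an element fixing every abscissa of $\E[m]$ sends each point to plus or minus itself, the mixed-sign possibilities are killed by testing on $P_1+P_2$ (legitimate since basis points of $\E[m]$, $m\geqslant 3$, are not $2$-torsion), so the group sits inside $\{\pm\Id\}$, and $-\Id$ moves $y(Q_1)$ because $Q_1$ has order $3$ or $4$. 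This is self-contained --- no isogeny lemma --- and in fact proves more than the statement: $K_m=K_{m,x}(y(Q))$ for \emph{any} $Q\in\E[m]$ of order greater than $2$, so the divisibility hypotheses serve only to place such a $Q$ inside $\E[3]$ or $\E[4]$, and the second ordinate $y(Q_2)$ is superfluous, as you note. A small remark: your appeal to Lemma \ref{34versusy} is dispensable, since the inclusion $\Gal(K_m/K_{m,x})\subseteq\{\pm\Id\}$ already yields $[K_m:K_{m,x}]\leqslant 2$ on its own (and avoids relying on equation \eqref{eqy1y2}, whose derivation divides by $4$). Conversely, what the paper's route buys is a pointwise refinement that your argument cannot recover: Proposition \ref{corReynolds} gives $K(x(R),y(R))=K\left(x(R),y\left(\frac{m}{d}R\right)\right)$ over the field of a \emph{single} point $R$, whereas pinning down $\s=\pm\Id$ genuinely requires the abscissas of a full basis together with $x(P_1+P_2)$, i.e.\ working over all of $K_{m,x}$.
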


\begin{proof}
Just apply Proposition \ref{corReynolds} with $d=3$ (resp. $d=4$).
\end{proof}

\section{Galois representations and exceptional primes}\label{SecGal}
\noindent We begin with some remarks on the Galois group $\Gal(K_p/K)$ for a prime $p\geqslant 5$,
which led us to believe that the generating set $\{x_1,\z_p,y_2\}$ is often minimal.

\begin{lemma}\label{ExtOrd2p}
For any prime $p\geqslant 5$ one has $[K_p:K(x_1,\z_p)] \leqslant 2p$.
Moreover the Galois group $\Gal(K_p/K(x_1,\z_p))$ is cyclic, generated
by a power of $\eta=\left(\begin{array}{cc} -1 & 1 \\ 0 & -1 \end{array}\right)\,$.
\end{lemma}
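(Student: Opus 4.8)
The plan is to translate membership in $\Gal(K_p/K(x_1,\z_p))$ directly into matrix conditions on $\rho_{\E,p}(\s)$ written in the basis $\{P_1,P_2\}$, and then to recognize the resulting group of matrices as a cyclic group of order $2p$. First I would take an arbitrary $\s\in \Gal(K_p/K(x_1,\z_p))$ and exploit the two defining constraints. Fixing $x_1=x(P_1)$ forces $\s(P_1)\in\{P_1,-P_1\}$ (the only points sharing the $x$-coordinate of $P_1$), so the first column of $\s$ is $\left(\begin{array}{c} \pm 1 \\ 0 \end{array}\right)$. Fixing $\z_p=e_p(P_1,P_2)$ gives, by the relation $\s(\z_p)=\z_p^{\det(\s)}$ recorded in Section \ref{prelim}, the condition $\det(\s)\equiv 1\pmod p$.

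Next I would combine these. Writing $\s=\left(\begin{array}{cc} \epsilon & b \\ 0 & d \\ \end{array}\right)$ with $\epsilon=\pm1$, the determinant condition $\epsilon d=1$ forces $d=\epsilon$. Hence every $\s$ lies in
\[ U:=\left\{ \left(\begin{array}{cc} \epsilon & b \\ 0 & \epsilon \\ \end{array}\right) \ :\ \epsilon\in\{\pm1\},\ b\in \Z/p\Z \right\}, \]
a set closed under multiplication and inversion, i.e. a subgroup of $\GL_2(\Z/p\Z)$ of order exactly $2p$. Since $\Gal(K_p/K(x_1,\z_p))\hookrightarrow U$, this already yields the degree bound $[K_p:K(x_1,\z_p)]=|\Gal(K_p/K(x_1,\z_p))|\leqslant |U|=2p$.

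For the structural claim I would show that $U$ is cyclic of order $2p$ generated by $\eta$. A short induction gives $\eta^k=(-1)^k\left(\begin{array}{cc} 1 & -k \\ 0 & 1 \\ \end{array}\right)$, so $\eta^k=\Id$ requires $k$ even and $p\mid k$; as $p$ is odd, the order of $\eta$ is $\mathrm{lcm}(2,p)=2p$. Thus $\langle\eta\rangle\subseteq U$ with $|\langle\eta\rangle|=2p=|U|$, forcing $U=\langle\eta\rangle$. Consequently $\Gal(K_p/K(x_1,\z_p))$, being a subgroup of a cyclic group, is cyclic and generated by some power $\eta^d$ with $d\mid 2p$, which is precisely the assertion. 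There is no serious obstacle here: the only point requiring a little care is the bookkeeping that ties $\det(\s)=1$ to the equality of the two diagonal entries (rather than merely $\epsilon=\pm1$, $d=\pm1$ independently); everything else is a direct computation.
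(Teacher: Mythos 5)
Your proposal is correct and takes essentially the same route as the paper's proof: both translate $\s(x_1)=x_1$ into $\s(P_1)=\pm P_1$ and $\s(\z_p)=\z_p$ into $\det(\s)=1$ via the Weil-pairing relation, conclude that $\s$ is upper triangular with equal diagonal entries $\pm 1$, and identify these matrices as the cyclic group $\langle\eta\rangle$ of order $2p$. Your version is if anything slightly cleaner, since you spell out the determinant bookkeeping forcing $d=\epsilon$ and verify $U=\langle\eta\rangle$ explicitly, and you correctly observe that no appeal to Theorem \ref{ordinates1++} (which the paper cites at the start of its proof) is actually needed for this lemma.
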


\begin{proof} By Theorem \ref{ordinates1++}, we have $K_p=K(x_1,\z_p,y_2)$. Let $\s\in \Gal(K_p/K(x_1,\z_p))$, then
$\s(P_1)=\pm P_1$ and $\det(\s)=1$ yield $\s=\left(\begin{array}{cc} \pm 1 & \alpha \\ 0 & \pm 1 \end{array}\right)$
(for some $0\leqslant \alpha \leqslant p-1$).
The powers of $\eta$ are
\[ \eta^n = \left\{ \begin{array}{ll} \left(\begin{array}{cc} 1 & -n \\ 0 & 1 \end{array}\right) & {\rm if}\ n\ {\rm is\ even}\\
\ & \ \\
\left(\begin{array}{cc} -1 & n \\ 0 & -1 \end{array}\right) & {\rm if}\ n\ {\rm is\ odd} \end{array} \right. \]
and its order is obviously $2p\,$; clearly any such $\s$ is a power of $\eta$.
%One easily checks that, for even $\alpha$,
%\[ \left(\begin{array}{cc} -1 & \alpha \\ 0 & -1 \end{array}\right) = \eta^{\alpha+p} \quad {\rm and} \quad
%\left(\begin{array}{cc} 1 & \alpha \\ 0 & 1 \end{array}\right) = \eta^{(p-1)\alpha} \]
%while, for odd $\alpha$,
%\[ \left(\begin{array}{cc} -1 & \alpha \\ 0 & -1 \end{array}\right) = \eta^\alpha \quad {\rm and} \quad
%\left(\begin{array}{cc} 1 & \alpha \\ 0 & 1 \end{array}\right) = \eta^{p-\alpha} \ . \]
\end{proof}

\begin{remark}\label{NotNorm}
The group generated by $\eta$ in $\GL_2(\Z/p\Z)$ is not normal; hence, in general, the extension $K(x_1,\z_p)/K$
is not Galois.
\end{remark}

\noindent Since the $p$-th division polynomial has degree $\frac{p^2-1}{2}$ and, obviously, $[K(x_1,\z_p):K(x_1)]\leqslant p-1$
one immediately finds
\[ [K(x_1,\z_p,y_2):K]\leqslant \frac{p^2-1}{2}\cdot(p-1)\cdot 2p  = |\GL_2(\Z/p\Z)| \]
and can provide conditions for the equality to hold.

\begin{theorem}\label{GalGL}
Let $p\geqslant 5$ be  a prime, then $\Gal(K_p/K) \simeq \GL_2(\Z/p\Z)$ if and only if the following hold:\begin{itemize}
\item[{\bf 1.}] $\z_p\not\in K$;
\item[{\bf 2.}] the $p$-th division polynomial $\varphi_p$ is irreducible in $K(\z_p)[x]$;
\item[{\bf 3.}] $y_1\not\in K(\z_p,x_1)$ and the generator of $\Gal(K(\z_p,x_1,y_1)/K(\z_p,x_1))$ is not $-\Id$. %(in $\Gal(K_p/K)\,$).
\end{itemize}
\end{theorem}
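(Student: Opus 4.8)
The plan is to reduce the statement to a degree computation along a convenient tower and then match each of the three conditions to the sharpness of one step. Since $\rho_{\E,p}$ embeds $\Gal(K_p/K)$ into $\GL_2(\Z/p\Z)$, the isomorphism $\Gal(K_p/K)\simeq\GL_2(\Z/p\Z)$ is equivalent to the numerical equality $[K_p:K]=|\GL_2(\Z/p\Z)|=(p^2-1)(p^2-p)$. Using $K_p=K(x_1,\z_p,y_2)$ (Theorem \ref{ordinates1++}) I would work with the tower
\[ K\subseteq K(\z_p)\subseteq K(\z_p,x_1)\subseteq K(\z_p,x_1,y_1)\subseteq K_p \]
whose successive degrees are bounded by $p-1$, $\frac{p^2-1}{2}$, $2$ and $p$; the last bound holds because any $\s\in\Gal(K_p/K(\z_p,x_1,y_1))$ fixes $P_1$ and has $\det(\s)=1$, hence is unipotent. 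As the product of the four bounds is exactly $|\GL_2(\Z/p\Z)|$, the isomorphism holds if and only if every step attains its bound, and the task becomes to show that conditions 1--3 are precisely the conditions for this.

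The two middle steps are the clean matches. The step $K(\z_p)\subseteq K(\z_p,x_1)$ has degree $\frac{p^2-1}{2}=\deg\varphi_p$ exactly when $\varphi_p$ is irreducible over $K(\z_p)$, which is condition 2. For the two top steps I would invoke Lemma \ref{ExtOrd2p}: $\Gal(K_p/K(x_1,\z_p))$ is cyclic, generated by a power of $\eta$, so its order lies in $\{1,2,p,2p\}$. The subgroups of order $1$ and $p$ (the latter being $\langle\eta^2\rangle$, unipotent) fix $P_1$ and hence $y_1$, so the first clause of condition 3, $y_1\notin K(\z_p,x_1)$, excludes exactly these and leaves order $2$ or $2p$. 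The order-$2$ case forces $\Gal(K_p/K(x_1,\z_p))=\langle-\Id\rangle$, whose generator restricts to the nontrivial element of $\Gal(K(\z_p,x_1,y_1)/K(\z_p,x_1))$; the second clause of condition 3 rules this out, leaving order $2p$, in which case the two top steps have degrees $2$ and $p$ as required. Thus condition 3 is equivalent to both top steps being sharp. Running the argument backwards yields necessity of conditions 2 and 3 as well, using that $\SL_2(\Z/p\Z)$ acts transitively on the $\frac{p^2-1}{2}$ pairs $\{\pm P_1\}$ and that the stabilizer of $P_1$ pins down the top of the tower.

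The remaining and most delicate step is the bottom one, $K\subseteq K(\z_p)$, which must have degree $p-1$; this is the factor controlled by the determinant, via $\s(\z_p)=\z_p^{\det(\s)}$, so $\det\bigl(\Gal(K_p/K)\bigr)=\Gal(K(\z_p)/K)$. In the necessity direction this is immediate, since $\GL_2(\Z/p\Z)$ has surjective determinant and hence forces $\z_p\notin K$. The hard part is the sufficiency direction: condition 1 only gives $[K(\z_p):K]>1$, and the crux is to upgrade this to the full equality $[K(\z_p):K]=p-1$, equivalently surjectivity of the determinant onto $(\Z/p\Z)^*$, once conditions 2--3 are known to force $\SL_2(\Z/p\Z)\subseteq\Gal(K_p/K)$. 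I expect this cyclotomic factor to be the main obstacle, being the one step whose sharpness does not follow from the internal Galois-theoretic analysis of the $x$- and $y$-coordinates but genuinely requires the interplay of the root of unity $\z_p$ with the hypotheses on $x_1$ and $y_1$.
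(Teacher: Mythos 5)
Your tower $K\subseteq K(\z_p)\subseteq K(\z_p,x_1)\subseteq K(\z_p,x_1,y_1)\subseteq K_p$ and your treatment of its top three steps is essentially identical to the paper's argument: the paper also matches condition \textbf{2} to the degree $\frac{p^2-1}{2}$ of the division polynomial, and handles condition \textbf{3} by observing that any lift of the generator of $\Gal(K(\z_p,x_1,y_1)/K(\z_p,x_1))$ has the form $\bigl(\begin{smallmatrix} -1 & \alpha \\ 0 & -1 \end{smallmatrix}\bigr)$ with $\alpha\neq 0$, hence has order $2p$ in $\Gal(K_p/K(\z_p,x_1))$ --- the same conclusion as your case analysis of the orders $\{1,2,p,2p\}$ via Lemma \ref{ExtOrd2p}. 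Your necessity direction (surjectivity of $\det$, transitivity of $\SL_2(\Z/p\Z)$ on the $\pm$-pairs) is also the content the paper dismisses as ``obvious''. Up to this point your proposal is correct and matches the paper.

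The step you flag as the remaining obstacle is, however, a genuine gap --- and in fact it cannot be closed, because the ``if'' direction of the statement is false as written. Conditions \textbf{2} and \textbf{3} force $\Gal(K_p/K(\z_p))=\SL_2(\Z/p\Z)$, so $[K_p:K]=|\SL_2(\Z/p\Z)|\cdot[K(\z_p):K]$, and condition \textbf{1} only gives $[K(\z_p):K]\geq 2$, exactly as you say. To see that nothing can upgrade this, take $\E$ defined over $\Q$ with $\rho_{\E,p}$ surjective and base-change to $K=\Q(\z_p)^{+}$, the maximal real subfield. Then $K_p=\Q(\E[p])$, condition \textbf{1} holds ($\z_p\notin K$ since $p\geq 5$), condition \textbf{2} holds ($\Gal(K_p/\Q(\z_p))=\SL_2(\Z/p\Z)$ acts transitively on the $\frac{p^2-1}{2}$ pairs $\{\pm P\}$), and condition \textbf{3} holds ($\Gal(K_p/K(\z_p,x_1))$ is the full cyclic group of order $2p$); yet $\Gal(K_p/K)=\det^{-1}(\{\pm 1\})$ has order $2p(p^2-1)<(p-1)p(p^2-1)=|\GL_2(\Z/p\Z)|$ for $p\geq 5$. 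The theorem is only correct if condition \textbf{1} is strengthened to $[K(\z_p):K]=p-1$ (equivalently $K\cap\Q(\z_p)=\Q$ in the number field case, which is automatic for $K=\Q$), and with that replacement your argument is a complete proof. The paper's own proof silently elides this very point, asserting only that ``the hypotheses lead to the equality $[K_p:K]=|\GL_2(\Z/p\Z)|$'', so your instinct that the cyclotomic factor is the crux, and that it does not follow from the internal analysis of the coordinates, was exactly right.
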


\begin{proof} Let $\s$  be a generator of \hspace{0.02cm} $\Gal(K(\z_p,x_1,y_1)/K(\z_p,x_1))$. Then $\s(P_1)=-P_1$
(because of hypothesis {\bf 3}) and $\det(\s)=1$. Hence it is of type
$\s= \left(\begin{array}{cc} -1 & \alpha \\ 0 & -1 \end{array}\right)$ with $\alpha\neq 0$ (again
by hypothesis {\bf 3}). Therefore $\s$ has order $2p$ in $\Gal(K_p/K(\z_p,x_1))$ and the hypotheses lead to the equality
$[K_p:K]=|\GL_2(\Z/p\Z)|\,$. Vice versa it is obvious that if any of the conditions does not hold we get
$[K_p:K]<|\GL_2(\Z/p\Z)|\,$.\end{proof}

\begin{remark}\label{Serre}
As mentioned in the Introduction, if $K$ is a number field and $\E$ has no complex multiplication, then one expects
the equality to hold for almost all primes $p$ (for a recent bound on exceptional primes for which $\rho_{\E,p}$ is
not surjective see \cite{LV}). Hence for a general number field $K$ (which, of course, can contain $\z_p$ or some coordinates
of generators of $\E[p]$ only for finitely many $p$) one expects $\{x_1,\z_p,y_2\}$ to be a minimal set of generators
for $K_p$ over $K$ (among those contained in $\{x_1,x_2,y_1,y_2,\z_p\}\,$).
We have encountered an exceptional case in Theorem \ref{ordinates1+}, where for $p\equiv 2\pmod 3$ ($p\neq 2$) one could have
$K_p=K(x_1,y_1,\z_p)$. If this is the case, the maximum degree for $[K_p:K]$ is $\frac{p^2-1}{2}\cdot 2\cdot (p-1)$.
Therefore for infinitely many primes $p\equiv 2\pmod 3$ we have
$K_p=K(x_1,y_1,y_2)=K(x_1,\z_p,y_2)\neq K(x_1,y_1,\z_p)$
(which emphasizes the need for coordinates of $P_2$ in our generating set).
\end{remark}

\begin{definition}\label{ExcPrimes}
For an elliptic curve $\E$ defined over a number field $K$ and a prime $p$ we say that $p$ is {\em exceptional} for
$\E$ if $\rho_{\E,p}$ is not surjective, i.e., if $[K_p:K]< |\GL_2(\Z/p\Z)|$. In particular,
if $\E$ has complex multiplication, then all primes are exceptional for $\E$, because $K_p/K$ is an abelian extension
(see, e.g., \cite[Chapter II, \S 5]{Sil2}).
\end{definition}

\noindent In the rest of this Section \ref{SecGal} we will investigate the case of exceptional primes,
assuming that $K$ is a number field. For exceptional primes the Galois group $\Gal(K_p/K)$ is a proper subgroup of
$\GL_2(\Z/p\Z)$. Hence it falls in one of the
following cases (see \cite[Section 2]{Ser} for a complete proof or \cite[Lemma 4]{LV} for a similar statement).

\begin{lemma}\label{SubGL2}
Let $G$ be a subgroup of $\GL_2(\Z/p\Z)$ then one of the following holds:
\begin{itemize}
\item[{\bf 1.}] $G$ is contained in a Borel subgroup;
\item[{\bf 2.}] $G$ is a Cartan subgroup;
\item[{\bf 3.}] $G$ is contained in the normalizer of a Cartan subgroup, but it is not a Cartan subgroup;
\item[{\bf 4.}]  the image of $G$ under the projection $\pi: \GL_2(\Z/p\Z) \rightarrow {\rm PGL}_2(\Z/p\Z)$ is
contained in a subgroup which is isomorphic to one of the alternating groups $A_4$ and $A_5$ or to the symmetric group $S_4$.
\end{itemize}
\end{lemma}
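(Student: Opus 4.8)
The statement is Dickson's classical classification of subgroups of $\GL_2(\FF_p)$, and the plan is to prove it by passing to the projective group and invoking the structure of finite subgroups of ${\rm PGL}_2(\FF_p)$ acting on the projective line. First I would set $\bar G:=\pi(G)\subseteq {\rm PGL}_2(\FF_p)$, where $\pi$ is the projection of Lemma \ref{SubGL2}. The key observation is that the three ``geometric'' families in the statement are stable under taking full preimages: every Borel subgroup, every Cartan subgroup and every normalizer of a Cartan subgroup contains the scalars $\ker\pi=\FF_p^*\cdot\Id$, so $G$ lies in a Borel (resp.\ in a Cartan, resp.\ in the normalizer of a Cartan) precisely when $\bar G$ does. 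Here one must keep track of rationality: a subgroup fixing two points of $\mathbb{P}^1$ lands in a split Cartan when the points are $\FF_p$-rational and in a nonsplit one when they form a Galois-conjugate pair over $\FF_{p^2}$. In this way it suffices to classify $\bar G$ as a finite subgroup of ${\rm PGL}_2(\FF_p)$ through its action on $\mathbb{P}^1(\ov{\FF_p})$.

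The main dichotomy is whether $p$ divides $|\bar G|$. If $p\nmid|\bar G|$, then $\bar G$ is a finite subgroup of ${\rm PGL}_2(\FF_p)$ of order prime to the characteristic. The classical orbit-counting argument (estimating the number of fixed points of nontrivial elements on $\mathbb{P}^1(\ov{\FF_p})$ via a Burnside-type computation) shows that such a $\bar G$ must be cyclic, dihedral, or isomorphic to one of $A_4$, $S_4$, $A_5$. A cyclic $\bar G$ is semisimple and fixes two points of $\mathbb{P}^1$, so $G$ sits inside a Cartan subgroup (case {\bf 2}, or a Borel when a single rational fixed point intervenes); a dihedral $\bar G$ lands in the normalizer of a Cartan (case {\bf 3}); and the three remaining exceptional groups give exactly case {\bf 4}.

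If instead $p\mid|\bar G|$, then $\bar G$ contains an element of order $p$, namely the image of a unipotent matrix $u$, which fixes a unique point of $\mathbb{P}^1(\FF_p)$. Two possibilities arise. Either every $p$-Sylow of $\bar G$ fixes the same point, in which case $\bar G$ stabilizes that point and is contained in the corresponding Borel subgroup (case {\bf 1}); or $\bar G$ contains two unipotents with distinct fixed points, and then the subgroup they generate is already all of ${\rm PSL}_2(\FF_p)$. Using the simplicity of ${\rm PSL}_2(\FF_p)$ for $p\geqslant 5$, this forces $\bar G\supseteq {\rm PSL}_2(\FF_p)$ and hence $G\supseteq \SL_2(\FF_p)$. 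This last possibility does not occur for the proper subgroups attached to exceptional primes, since $[K_p:K]<|\GL_2(\FF_p)|$ together with the surjectivity of $\det$ on $\Gal(K_p/K)$ rules out $G\supseteq\SL_2(\FF_p)$; hence it may be discarded in our setting.

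The main obstacle is precisely the case $p\mid|\bar G|$ with two distinct unipotent directions: showing that two such transvections already generate ${\rm PSL}_2(\FF_p)$. This rests on the fact that $\SL_2(\FF_p)$ is generated by its transvections, combined with a conjugation argument, and it is here that the hypothesis $p\geqslant 5$ (through the simplicity of ${\rm PSL}_2$) is essential. The order-prime-to-$p$ classification of the first case is comparatively standard, but it still requires the fixed-point count to be carried out carefully; for a complete treatment one may follow \cite[Section 2]{Ser}.
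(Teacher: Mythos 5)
Your proposal is correct in outline, but note that the paper offers no proof of Lemma \ref{SubGL2} at all: the statement is given with a pointer to \cite[Section 2]{Ser} (and \cite[Lemma 4]{LV}), so what you have written is essentially an expansion of the classical Dickson--Serre argument that the paper cites rather than a genuinely different route. Your expansion is in one respect \emph{more} careful than the paper: you rightly flagged that, as literally stated, the lemma omits the alternative that $G$ contains $\SL_2(\Z/p\Z)$ --- indeed $G=\GL_2(\Z/p\Z)$, or $G=\SL_2(\Z/p\Z)$ for $p\geq 7$, satisfies none of {\bf 1}--{\bf 4} (amusingly, for $p=5$ one has ${\rm PSL}_2(\FF_5)\simeq A_5$, so $\SL_2$ then falls under case {\bf 4}); the cited references list this as an extra case, and the paper silently discards it because $\Gal(K_p/K)$ is proper for exceptional $p$. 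Your discharge via surjectivity of $\det$ is the right mechanism, with the caveat that $\det$ is the cyclotomic character, so surjectivity requires $[K(\z_p):K]=p-1$; this holds e.g.\ when $p$ is unramified in $K/\Q$, which is exactly the hypothesis under which the paper applies the lemma (Lemma \ref{LVLemma}), so the exclusion belongs to the application, as you say. Two small repairs to the $p\mid |\ov{G}|$ branch: simplicity of ${\rm PSL}_2$ is not what is needed --- a nontrivial unipotent generates its full order-$p$ root subgroup, and the two opposite root subgroups generate $\SL_2(\Z/p\Z)$ for \emph{every} $p$; and to pass from $\ov{G}\supseteq {\rm PSL}_2$ to $G\supseteq \SL_2$ you should lift explicitly, raising an element of $G$ whose image has order $p$ to a suitable power prime to $p$ to obtain an honest unipotent in $G$. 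Finally, in the prime-to-$p$ branch, reconcile ``$G$ sits inside a Cartan'' with case {\bf 2}, which asserts $G$ \emph{is} a Cartan: a proper subgroup of a Cartan is swept into case {\bf 3} (or case {\bf 1} in the split case), so exhaustiveness still holds, but it deserves an explicit sentence.
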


\noindent In particular if one of cases \textbf{1} or \textbf{2} holds, then $G$ acts reducibly on $\E[p]$.
Regarding case ${\bf 4}$ we have the next statement.

\begin{lemma}\label{LVLemma}
If $p\geq 53$ is unramified in $K/\Q$ and exceptional for $\E$, then $\Gal(K_p/K)$ does not satisfy
{\bf 4} of Lemma \ref{SubGL2}.
\end{lemma}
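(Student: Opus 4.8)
The plan is to argue by contradiction. Suppose $G:=\Gal(K_p/K)$ falls in case {\bf 4}, so that its projective image $\bar G:=\pi(G)$ is contained in a copy of $A_4$, $S_4$ or $A_5$ inside ${\rm PGL}_2(\Z/p\Z)$; in particular every element of $\bar G$ has order at most $5$. The one place where the hypothesis on $K$ enters is the following: since $\det\circ\rho_{\E,p}$ is the mod-$p$ cyclotomic character, $\det(G)=\Gal(K(\z_p)/K)$, and because $\Q(\z_p)/\Q$ is totally ramified at $p$ while $p$ is unramified in $K/\Q$, we have $K\cap\Q(\z_p)=\Q$; hence $[K(\z_p):K]=p-1$ and the determinant $\det\colon G\to(\Z/p\Z)^*$ is \emph{surjective}. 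I would record this first.

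I would then dispatch the groups with no quotient of order $2$ (notably $A_4$ and $A_5$) for free. Reducing $\det$ modulo squares gives a well-defined homomorphism $\chi\colon{\rm PGL}_2(\Z/p\Z)\to(\Z/p\Z)^*/\big((\Z/p\Z)^*\big)^2\cong\{\pm1\}$, since scalar matrices have square determinant, and its kernel is ${\rm PSL}_2(\Z/p\Z)$. If $\bar G$ admits no surjection onto $\Z/2\Z$ (which is the case for $A_4$, $A_5$ and any cyclic group of odd order), then $\chi$ is trivial on $\bar G$, so $\bar G\subseteq{\rm PSL}_2(\Z/p\Z)$ and every element of $G$ has square determinant; for $p\geqslant 5$ this contradicts the surjectivity of $\det$, since $(\Z/p\Z)^*$ contains non-squares. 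So I am left with the case in which $\bar G$ has a quotient $\Z/2\Z$.

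The remaining case is the genuine obstacle. Here I would first show that the subgroup $Z$ of scalar matrices lying in $G$ is \emph{large}. Composing $\det$ with the projection $(\Z/p\Z)^*\to(\Z/p\Z)^*/\det(Z)$ annihilates $Z=\ker(\pi|_G)$, hence factors through $\bar G$ and produces a surjection from $\bar G$ onto the cyclic group $(\Z/p\Z)^*/\det(Z)$; this factors through $\bar G^{\mathrm{ab}}$, and since $\bar G$ is a subgroup of $S_4$ or $A_5$ its largest cyclic quotient has order at most $4$, forcing $|Z|\geqslant(p-1)/4$. Thus $G$ is, up to bounded index, the full preimage of $\bar G$ in $\GL_2(\Z/p\Z)$. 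To contradict this I would invoke the fact that the image of an elliptic-curve representation is not an arbitrary such group: at a prime $\mathfrak p\mid p$, where $e(\mathfrak p/p)=1$ precisely because $p$ is unramified in $K/\Q$, Serre's theory of the tame fundamental characters describes the image of inertia, and in every reduction type (good ordinary, good supersingular, potentially multiplicative, or potentially good after a twist by a character of order dividing $24$) this image contains an element whose projective order grows with $p$ — on the scale of $p-1$ or $p+1$, reduced by at most the bounded twisting order. Since no element of $S_4$ or $A_5$ has order exceeding $5$, this is impossible once $p$ is large, and tracking the reduction-type bookkeeping makes the threshold explicit. This effective estimate is the crux; I would either finish via the inertia analysis just sketched or, more economically, quote the explicit bound of \cite{LV}, which guarantees that no exceptional projective image survives for $p\geqslant 53$ unramified.
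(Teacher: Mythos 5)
Your proposal is correct, and at the decisive point it coincides with the paper: the paper's entire proof of this lemma is the citation ``see \cite[Lemma 8]{LV}, depending on \cite[Lemma 18]{Ser2}'', which is exactly your ``more economical'' fallback. What you place in front of that citation is, in effect, an unpacking of the skeleton of the cited arguments: the observation that unramifiedness of $p$ in $K/\Q$ forces $K\cap\Q(\z_p)=\Q$ and hence surjectivity of $\det$ on $\Gal(K_p/K)$; the cheap elimination of projective images with no $\Z/2\Z$ quotient via the determinant-mod-squares character whose kernel is ${\rm PSL}_2(\Z/p\Z)$; and the lower bound $|Z|\geq (p-1)/4$ on the scalar subgroup. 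These reductions are sound, with one small point worth a sentence: since case {\bf 4} only says $\pi(G)$ is \emph{contained in} a copy of $A_4$, $S_4$ or $A_5$, your claim that the largest cyclic quotient has order at most $4$ needs the remark that the sole exception, the order-$5$ cyclic subgroups of $A_5$, have no $\Z/2\Z$ quotient and were already dispatched in your first step. However, the reductions do not by themselves prove the lemma: the crux --- that the image of inertia at a place above $p$, described by Serre's tame fundamental characters, contains an element of projective order exceeding $5$ once $p\geq 53$ --- is left as a sketch, and the specific threshold $53$ comes precisely from the reduction-type bookkeeping you defer. So as written your proof stands on the citation, which is the same route the paper takes; the surrounding material is a correct and useful gloss on what the cited lemmas actually do rather than an independent argument.
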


\begin{proof}
See \cite[Lemma 8]{LV}, depending on \cite[Lemma 18]{Ser2}.
\end{proof}

\noindent We shall provide some information on the generating sets for $K_p$ when $p$ is exceptional for $\E$
and $\Gal(K_p/K)$ falls in cases {\bf 1}, {\bf 2} or {\bf 3} of Lemma \ref{SubGL2}. We start with the already
mentioned exceptional case appearing in Theorem \ref{ordinates1+} and recall that we are always assuming $p\geq 5$.

\begin{proposition}
If $K_p=K(x_1,y_1,\z_p)$, then $[K_p:K]< (p^2-1)(p-1)$ unless $p=5$ and $\pi(\Gal(K_p/K))\simeq S_4\,$.
\end{proposition}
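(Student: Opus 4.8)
The plan is to pass to group theory through $G:=\Gal(K_p/K)\subseteq\GL_2(\Z/p\Z)$, using that $K_p/K$ is Galois so that $[K_p:K]=|G|$. The hypothesis $K_p=K(x_1,y_1,\z_p)$, together with the degree estimates recorded in Remark \ref{Serre} (namely $[K(x_1):K]\leqslant\frac{p^2-1}{2}$, $[K(x_1,y_1):K(x_1)]\leqslant 2$ and $[K(x_1,y_1,\z_p):K(x_1,y_1)]\leqslant p-1$), already yields $|G|\leqslant(p^2-1)(p-1)$. Hence the inequality $[K_p:K]<(p^2-1)(p-1)$ is automatic \emph{unless} the extremal value $|G|=(p^2-1)(p-1)$ is attained, and it suffices to show that this extremal case forces $p=5$ and $\pi(G)\simeq S_4$.

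The crucial observation is arithmetic: if $|G|=(p^2-1)(p-1)=(p-1)^2(p+1)$, then $[\GL_2(\Z/p\Z):G]=p$, and above all $|G|$ is coprime to $p$. Thus $G$ is a $p'$-group containing no element of order $p$ (consistent with the hypothesis, which forces the unipotent subgroup to meet $G$ trivially). I would then run through the four cases of Lemma \ref{SubGL2}, eliminating the first three by pure divisibility of orders. A Borel subgroup has order $p(p-1)^2$, so any $p'$-subgroup it contains has order dividing $(p-1)^2<|G|$, ruling out case \textbf{1}. The split and non-split Cartan subgroups have orders $(p-1)^2$ and $p^2-1$, and their normalizers have orders $2(p-1)^2$ and $2(p^2-1)$; for $p\geqslant 5$ each of these is strictly smaller than $(p-1)^2(p+1)$, so $|G|$ cannot divide any of them, excluding cases \textbf{2} and \textbf{3}.

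Only case \textbf{4} survives, where the argument becomes a finiteness estimate. Writing $Z$ for the scalar subgroup, with $|Z|=p-1$, and noting $|\pi(G)|=|G|/|G\cap Z|\geqslant |G|/(p-1)=p^2-1$, while case \textbf{4} bounds $|\pi(G)|\leqslant|A_5|=60$, I obtain $p^2-1\leqslant 60$, hence $p\in\{5,7\}$. For $p=7$ one has $|G|=288$ and $|\pi(G)|\geqslant 48$; but the only order $\geqslant 48$ occurring as the order of a subgroup of $A_4$, $S_4$ or $A_5$ is $60$, and $60\nmid 288$, a contradiction. For $p=5$ one has $|G|=96$ and $|\pi(G)|\geqslant 24$; the joint constraints $|\pi(G)|\mid 96$ and $|\pi(G)|$ an admissible order from case \textbf{4} single out $|\pi(G)|=24$, i.e. $\pi(G)\simeq S_4$ (and then $|G\cap Z|=4=p-1$, so $Z\subseteq G$, which is consistent).

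The step I expect to be most delicate is the bookkeeping in case \textbf{4}: one must combine the lower bound $|\pi(G)|\geqslant p^2-1$ coming from the bounded center with the exact divisibility $|\pi(G)|\mid|G|$ and the list of admissible orders (divisors of $12$, $24$ and $60$). It is precisely this interplay of divisibility conditions, rather than a crude size comparison, that eliminates $p=7$ and pins down $\pi(G)\simeq S_4$ for $p=5$, thereby identifying the unique exception to the strict inequality.
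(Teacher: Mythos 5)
Your proof is correct and follows the same skeleton as the paper's: reduce to the extremal case $[K_p:K]=(p^2-1)(p-1)$ via the degree bound, eliminate cases \textbf{1}--\textbf{3} of Lemma \ref{SubGL2} by comparing orders, then in case \textbf{4} use $|\pi(G)|\geq |G|/(p-1)=p^2-1\leq 60$ to force $p\in\{5,7\}$ and pin down $S_4$ at $p=5$. There are two genuine local divergences worth noting. First, at $p=7$ the paper rules out the $A_5$ possibility by citing the group-theoretic fact that ${\rm PGL}_2(\Z/7\Z)$ contains no subgroup isomorphic to $A_5$ (Serre, Section 2.5); your divisibility argument --- $|\pi(G)|$ divides $|G|=288$, is at least $48$, and must divide $12$, $24$ or $60$, while $\gcd(288,60)=12<48$ --- is self-contained and replaces that citation with pure arithmetic, which is a modest but real gain in elementarity. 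Second, in eliminating cases \textbf{2} and \textbf{3} you explicitly distinguish the split and non-split Cartan orders, $(p-1)^2$ and $p^2-1$, with normalizers of orders $2(p-1)^2$ and $2(p^2-1)$; the paper's proof asserts that a Cartan subgroup has order \emph{at most} $(p-1)^2$, which is a slip for the non-split case (though its conclusion survives, since $2(p^2-1)<p(p-1)^2$ for $p\geq 5$), so your version is actually the more careful one. Your bookkeeping at $p=5$ (getting $|\pi(G)|\in\{96,48,24\}$, noting only $24$ is an admissible subgroup order, and excluding a subgroup of order $24$ in $A_5$ by Lagrange) makes explicit what the paper compresses into the single line $|\pi(\Gal(K_p/K))|\geq 24=|S_4|$; your $p'$-subgroup argument for the Borel case is equivalent to, if slightly more roundabout than, the paper's direct size comparison $p(p-1)^2<(p-1)^2(p+1)$.
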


\begin{proof} We have already noticed that $[K_p:K]\leq (p^2-1)(p-1)$, so the prime $p$ is exceptional.
But the order of a Borel subgroup is $p(p-1)^2$ and the order of a Cartan subgroup is at most $(p-1)^2$
(and it has index 2 in its normalizer), so the statement holds (even with the stronger bound $p(p-1)^2\,$)
when $\Gal(K_p/K)$ falls in cases {\bf 1}, {\bf 2} or {\bf 3} of Lemma \ref{SubGL2}.
Assume we are in case {\bf 4} and note that if $|\Gal(K_p/K)|=(p^2-1)(p-1)$, then $|\pi(\Gal(K_p/K))|\geq p^2-1$.
Thus case {\bf 4} cannot happen for $p\geq 11$. Moreover, if $p=7$, then $p^2-1 > |S_4|$ and ${\rm PGL}_2(\Z/p\Z)$
does not contain $|A_5|$ (see \cite[Section 2.5]{Ser}). We are left with $p=5$, $[K_5:K]=96$ and
$|\pi(\Gal(K_p/K))|\geq 24 = |S_4|$, which completes the proof.
\end{proof}

\subsection{Exceptional primes I: Borel subgroup}
Assume that $p\geq 5$ is exceptional for $\E$ and $\Gal(K_p/K)$ is contained in a Borel subgroup.
We can write elements of $\Gal(K_p/K)$ as upper triangular matrices $\s=\left( \begin{array}{cc} a & b \\ 0 & c \end{array}\right)$
with $ac\neq 0$ (this is not restrictive, since the results of the previous sections were completely independent
of the chosen basis $\{P_1,P_2\}\,$).

\begin{theorem}\label{Borel} Let $p\geq 5$ and assume that $\Gal(K_p/K)$ is contained in a Borel subgroup. \begin{itemize}
\item[{\bf 1.}] If $p\not\equiv 1 \pmod{3}$, then $K_p=K(\z_p,y_2)\,$;
\item[{\bf 2.}] if $p\equiv 1 \pmod{3}$, then $[K_p:K(\z_p,y_2)]$ is $1$ or $3$.
\end{itemize}
\end{theorem}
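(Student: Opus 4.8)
The plan is to reduce both statements to a computation of the order of the group $H := \Gal(K_p/K(\z_p,y_2))$. By Theorem \ref{ordinates1++} we already have $K_p = K(x_1,\z_p,y_2)$, so $[K_p:K(\z_p,y_2)] = |H|$ and the whole theorem amounts to showing $|H| \in \{1,3\}$, with $|H| = 3$ possible only when $p \equiv 1 \pmod 3$. First I would record the constraints on an element $\s \in H$. Since $\Gal(K_p/K)$ lies in the Borel, $\s = \left(\begin{array}{cc} a & b \\ 0 & c \end{array}\right)$ is upper triangular; fixing $\z_p = e_p(P_1,P_2)$ forces $\z_p = \z_p^{\det \s}$, i.e. $ac \equiv 1 \pmod p$, so $c = a^{-1}$; and fixing $y_2 = y(P_2)$ forces $y(\s(P_2)) = y_2$, where $\s(P_2) = bP_1 + a^{-1}P_2$.

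Next I would bound $|H|$. Any $\s \in H$ fixes $y_2$, so $\s(P_2)$ is again a point of $\E$ with $y$-coordinate $y_2$; but the line $y = y_2$ meets $\E$ in at most three points, so the $H$-orbit of $P_2$ has size at most $3$. On the other hand the stabilizer of $P_2$ in $H$ is trivial: if $bP_1 + a^{-1}P_2 = P_2$ then $b \equiv 0$ and $a \equiv 1 \pmod p$, whence $\s = \Id$. By orbit--stabilizer, $|H| = |H\cdot P_2| \leq 3$.

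Finally I would pin down which orders actually occur. To exclude order $2$, note that an involution $\s = \left(\begin{array}{cc} a & b \\ 0 & a^{-1}\end{array}\right)$ satisfies $a^2 = 1$ and $b(a + a^{-1}) = 0$; since $p$ is odd this gives $b = 0$ and $\s = \pm\Id$, and $\s = -\Id$ is impossible in $H$ because $y(-P_2) = -y_2 \neq y_2$ (as $p \neq 2$ forces $y_2 \neq 0$). Hence $|H|$ is odd, so $|H| \in \{1,3\}$. An element of order $3$ must have $a^3 = 1$ with $a \neq 1$ (the case $a = 1$ would give a matrix of order $p \neq 3$), i.e. a primitive cube root of unity $a \in (\Z/p\Z)^\ast$; such $a$ exists precisely when $p \equiv 1 \pmod 3$. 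Therefore $p \not\equiv 1 \pmod 3$ forces $|H| = 1$ and $K_p = K(\z_p,y_2)$, proving part {\bf 1}, while for $p \equiv 1 \pmod 3$ we only obtain $|H| \in \{1,3\}$, proving part {\bf 2}.

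I expect the only delicate point to be the degree bound: the key insight is that fixing the single element $y_2$ constrains $\s(P_2)$ to the at most three collinear points on $y = y_2$, turning an a priori unwieldy stabilizer computation into a clean orbit of size at most $3$. Everything after that is a short matrix calculation, and the role of the hypothesis $p \geq 5$ is exactly to guarantee $y_2 \neq 0$ (ruling out $-\Id$) and $p \neq 3$ (so that an order-$3$ element must carry a primitive cube root of unity, which is what detects the congruence $p \equiv 1 \pmod 3$).
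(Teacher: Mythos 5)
Your proof is correct, and it rests on the same central insight as the paper's: any $\s\in H:=\Gal(K_p/K(\z_p,y_2))$ must send $P_2$ to one of the at most three points of $\E$ on the line $y=y_2$. What differs is the packaging. The paper chases cases explicitly: it names the three collinear points $P_2$, $R_2$, $S_2$, analyzes $\s(P_2)$ and then $\s^2(P_2)$ among them, and in the one surviving subcase extracts $1+a+a^2=0$ from the group-law relation $P_2+R_2+S_2=O$. You compress all of this into soft group theory: the stabilizer of $P_2$ in $H$ is trivial, so orbit--stabilizer gives $|H|\leq 3$ in one stroke; Cauchy plus a short matrix computation excludes order $2$ (with $-\Id$ ruled out directly by $y(-P_2)=-y_2\neq y_2$); and the congruence condition falls out of demanding an element $a\in(\Z/p\Z)^\ast$ with $a^3=1$, $a\neq 1$, which is of course equivalent to the paper's $1+a+a^2=0$. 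Two small dividends of your route: you never need the collinearity relation $P_2+R_2+S_2=O$, and you do not actually need Theorem \ref{ordinates1++} either, since $[K_p:K(\z_p,y_2)]=|H|$ is just the Galois correspondence --- whereas the paper genuinely invokes $K_p=K(x_1,\z_p,y_2)$ inside the subcase $\s^2(P_2)=P_2$ (there $b=0$ and $a=\pm 1$, so $\s$ fixes $x_1$, $\z_p$, $y_2$ and must be $\Id$), a step your direct exclusion of $-\Id$ replaces. What the paper's longer computation buys is explicitness: it exhibits the shape of the candidate order-$3$ element and shows geometrically where $p\equiv 1\pmod 3$ originates, and that template is then reused almost verbatim in the Cartan case (Theorem \ref{Cartan}).
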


\begin{proof}
We know $K_p=K(x_1,\z_p,y_2)$. Take an element $\s\in \Gal(K_p/K(\z_p,y_2))$ so that
$\s=\left( \begin{array}{cc} a^{-1} & b \\ 0 & a \end{array}\right)$. Let $P_2\,$, $R_2$ and $S_2$ be the
three points of the curve $\E$ on the line $y=y_2\,$, so that $P_2+R_2+S_2=O$. We have that $\s(P_2)=bP_1+aP_2$ must
be $P_2$ or $R_2$ or $S_2$ (the cases $R_2$ and $S_2$ are obviously symmetric).

\noindent{\em Case 1: $\s(P_2)=P_2\,$.} Then $b=0$, $a=1$ and $\s=\Id$.

\noindent{\em Case 2: $\s(P_2)=R_2\,$.} Then $\s^2(P_2)=a^{-1}bP_1+abP_1+a^2P_2\,$.
\begin{itemize}
\item If $\s^2(P_2)=P_2\,$, then $a^2= 1$ and $a+a^{-1}\neq 0$ yields $b=0$. Hence $\s(P_1)=\pm P_1$ and
$\s$ fixes $x_1\,$. Since $K_p=K(x_1,\z_p,y_2)$, this implies $\s=\Id$.
\item If $\s^2(P_2)=R_2\,$, then one gets $a^2=a$ (i.e., $a=1$) and $2b=b$ (i.e., $b=0$), leading to $\s=\Id$.
\item If $\s^2(P_2)=S_2\,$, then $P_2+R_2+S_2=O$ yields
\[ P_2+bP_1+aP_2+a^{-1}bP_1+abP_1+a^2P_2 = ba^{-1}(a+1+a^2)P_1+(1+a+a^2)P_2=O  \]
Thus $1+a+a^2=0$ and this is possible if and only if $p\equiv 1\pmod{3}$.
\end{itemize}
Therefore, if $p\not\equiv 1\pmod{3}$, we have $\s=\Id$ and $K_p=K(\z_p,y_2)$. If $p\equiv 1\pmod{3}$ and
$1+a+a^2=0$, then the above $\s$ has order 3 and the proof is complete.
\end{proof}

\subsection{Exceptional primes II: Cartan subgroup}
Assume that $p\geq 5$ is exceptional for $\E$ and $\Gal(K_p/K)$ is contained in a Cartan subgroup (resp. in a normalizer of
a Cartan subgroup). Then we can write elements of $\Gal(K_p/K)$ as matrices $\s=\left( \begin{array}{cc} a & 0 \\ 0 & c \end{array}\right)$
(resp. $\s=\left( \begin{array}{cc} a & 0 \\ 0 & c \end{array}\right)$ or
$\s=\left( \begin{array}{cc} 0 & a \\ c & 0 \end{array}\right)\,$) with $ac\neq 0$.

\begin{theorem}\label{Cartan} In the above setting we have $K_p=K(x_1,\z_p)$ or $K(x_1,y_1,\z_p)$. Moreover\begin{itemize}
\item[{\bf 1.}] if $p\not\equiv 1 \pmod{3}$, then $K_p=K(\z_p,y_2)\,$;
\item[{\bf 2.}] if $p\equiv 1\pmod{3}$, then $[K_p:K(\z_p,y_2)]$ is $1$ or $3$.
\end{itemize}
\end{theorem}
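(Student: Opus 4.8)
The plan is to follow the blueprint of the proof of Theorem~\ref{Borel}, using two facts already available: that $K_p=K(x_1,\z_p,y_2)$ by Theorem~\ref{ordinates1++}, and that (as remarked at the start of the Borel subsection) all the relevant results are independent of the chosen basis, so that I may take $\{P_1,P_2\}$ adapted to the Cartan structure. Thus every $\s\in\Gal(K_p/K)$ is diagonal $\s=\left(\begin{smallmatrix} a & 0 \\ 0 & c\end{smallmatrix}\right)$ in the Cartan case, or diagonal or anti-diagonal in the normalizer case, with $\s(\z_p)=\z_p^{\det\s}$. The two assertions are essentially independent — the first removes $y_2$ and the second removes $x_1$ — so I would prove them by examining $\Gal(K_p/K(x_1,\z_p))$ and $\Gal(K_p/K(\z_p,y_2))$ separately.

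For the first assertion I would take $\s\in\Gal(K_p/K(x_1,\z_p))$. Fixing $x_1$ forces $\s(P_1)=\pm P_1$; an anti-diagonal $\s$ would send $P_1$ to a nonzero multiple of $P_2$, which can never equal $\pm P_1$, so $\s$ must be diagonal with $a=\pm1$, and then $\det\s=1$ gives $c=a$, i.e. $\s=\pm\Id$. Hence $[K_p:K(x_1,\z_p)]\leqslant2$. If this group is trivial we get $K_p=K(x_1,\z_p)$; otherwise it is $\langle-\Id\rangle$, and since $-\Id$ sends $y_1\mapsto-y_1$ with $y_1\neq0$ (because $p\neq2$), the degree-$2$ step is realized by adjoining $y_1$, giving $K_p=K(x_1,y_1,\z_p)$.

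For the two numbered claims I would take $\s\in\Gal(K_p/K(\z_p,y_2))$, so $\det\s=1$ and $\s$ fixes $y_2$. First I would eliminate the anti-diagonal elements in the normalizer case: such $\s$ satisfies $ac=-1$, whence $\s^2=-\Id$, but $-\Id$ sends $y_2\mapsto-y_2\neq y_2$, contradicting that $\s^2$ fixes $y_2$. So $\s$ is diagonal, $\s=\left(\begin{smallmatrix} a & 0 \\ 0 & a^{-1}\end{smallmatrix}\right)$ with $\s(P_2)=a^{-1}P_2$. Letting $P_2,R_2,S_2$ be the three points of $\E$ on the line $y=y_2$ (so $P_2+R_2+S_2=O$), both $\s(P_2)=a^{-1}P_2$ and $\s^2(P_2)=a^{-2}P_2$ lie on this line. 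A short analysis of which of $P_2,R_2,S_2$ they equal shows that the only possibility with $\s\neq\Id$ is $\{P_2,\s(P_2),\s^2(P_2)\}=\{P_2,R_2,S_2\}$, whence $1+a^{-1}+a^{-2}\equiv0\pmod p$; this forces $a$ to be a primitive cube root of unity, which exists only for $p\equiv1\pmod3$ and makes $\s$ of order $3$. Therefore $\Gal(K_p/K(\z_p,y_2))$ is contained in the order-$3$ group of diagonal matrices with cube-root-of-unity entries, so its order is $1$ or $3$, and it is trivial when $p\not\equiv1\pmod3$; since $K_p/K$ is Galois, $[K_p:K(\z_p,y_2)]$ equals this order, yielding both claims.

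I expect the main obstacle to be the case analysis in the third paragraph. One must check that $\s(P_2)=R_2$ together with $\s^2(P_2)=P_2$ or $\s^2(P_2)=R_2$ both collapse to $\s=\Id$ — here the value $a=-1$ is ruled out because $-P_2$ has $y$-coordinate $-y_2$ and so lies off the line $y=y_2$ — leaving only the symmetric genuine configuration that produces $a^2+a+1\equiv0$. A secondary subtlety, which I would state explicitly, is the legitimacy of working with the Cartan-adapted basis (so that the diagonal/anti-diagonal forms apply) while keeping the normalization $\z_p=e_p(P_1,P_2)$; this is exactly the basis-independence already invoked in the Borel case. The elimination of anti-diagonal elements via $\s^2=-\Id$ is clean once $y_2\neq0$ is used.
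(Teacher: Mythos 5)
Your proposal is correct and takes essentially the same route as the paper: the first statement via the observation that only $\pm\Id$ in the normalizer of a Cartan subgroup fixes $x_1$ and $\z_p$, the elimination of anti-diagonal elements through $\s^2=-\Id$ and $y_2\neq 0$, and the three-points-on-the-line $y=y_2$ analysis yielding $1+a+a^2\equiv 0\pmod p$, hence order $3$ only when $p\equiv 1\pmod 3$. The sole (cosmetic) divergence is in the sub-case $\s^2(P_2)=P_2$: you rule out $a=-1$ directly because $-\Id$ moves $y_2$, whereas the paper notes $\s(P_1)=\pm P_1$ fixes $x_1$ and invokes $K_p=K(x_1,\z_p,y_2)$ as in the Borel case; both arguments are valid.
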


\begin{proof} Note that the only elements of the normalizer of a Cartan subgroup (hence, in particular, of a Cartan subgroup)
which fix $x_1$ and $\z_p$ are $\pm \Id\,$: the first statement follows immediately. Now consider $\s\in \Gal(K_p/K(\z_p,y_2))$ and let
 $R_2$ and $S_2$ be the points defined in Theorem \ref{Borel}. If
$\s=\left( \begin{array}{cc} 0 & a \\ -a^{-1} & 0 \end{array}\right)$, then
$\s^2(P_2)=\s(aP_1)=-P_2\,$. Since $\s$ fixes $y_2\,$, this implies $y_2=0$ which contradicts $p\neq 2$. Therefore
we can restrict to Cartan subgroups and consider only $\s=\left( \begin{array}{cc} a^{-1} & 0 \\ 0 & a \end{array}\right)$.

\noindent{\em Case 1: $\s(P_2)=P_2\,$.} Then $a=1$ and $\s=\Id$.

\noindent{\em Case 2: $\s(P_2)=R_2\,$.} Then $\s^2(P_2)=a^2P_2\,$.
\begin{itemize}
\item If $\s^2(P_2)=P_2\,$, then $a^2= 1$ and $\s(P_1)=\pm P_1\,$. As in Theorem \ref{Borel}, this implies $\s=\Id$.
\item If $\s^2(P_2)=R_2\,$, then $a^2=a$ yields $a=1$ and $\s=\Id$.
\item If $\s^2(P_2)=S_2\,$, then $P_2+R_2+S_2=O$ yields
\[ P_2+aP_2+a^2P_2 = (1+a+a^2)P_2=O \ .\]
Thus $1+a+a^2=0$ and this is possible if and only if $p\equiv 1\pmod{3}$.
\end{itemize}
Therefore, if $p\not\equiv 1\pmod{3}$, we have $\s=\Id$ and $K_p=K(\z_p,y_2)$. If $p\equiv 1\pmod{3}$ and $1+a+a^2=0$, then
$\s$ has order 3.
\end{proof}

\begin{remark}
The information carried by $\z_p$ seems more relevant than that by the coordinate $x_1$ in the
exceptional case. Indeed if one considers a $\s\in \Gal(K_p/K(x_1,y_2))$, there is always room for elements like
$\s=\left( \begin{array}{cc} -1 & 0 \\ 0 & 1 \end{array}\right)$ of order 2. A proof similar to the previous
ones leads to (both in the Borel and the Cartan case)\begin{itemize}
\item[{\bf 1.}] $p\not\equiv 1\pmod{3} \Longrightarrow [K_p:K(x_1,y_2)]$ divides 4;
\item[{\bf 2.}] $p\equiv 1\pmod{3} \Longrightarrow [K_p:K(x_1,y_2)]$ divides 12.
\end{itemize}
\end{remark}

\subsection{Remarks on modular curves}\label{SecMod}

We give just an application of the results of the previous sections to the classical modular curves $X(p)$ and $X_1(p)$,
associated to the action of the congruence subgroups
\[ \Gamma(p) = \left\{ A=\left( \begin{array}{cc} a & b \\ c & d \end{array} \right) \in \SL_2(\Z)\,:\,
A\equiv \left( \begin{array}{cc} 1 & 0 \\ 0 & 1 \end{array} \right) \pmod p \right\} \]
and
\[ \Gamma_1(p) = \left\{ A=\left( \begin{array}{cc} a & b \\ c & d \end{array} \right) \in \SL_2(\Z)\,:\,
A\equiv \left( \begin{array}{cc} 1 & * \\ 0 & 1 \end{array} \right) \pmod p \right\} \]
on the complex upper half plane $\mathcal{H}=\{z\in \mathbb{C}\,:\,Im\,z > 0\}$ via M\"obius trasformations
(for detailed definitions and properties see, e.g. \cite{KM} or \cite{Sh}).
We recall that $X(p)$ and $X_1(p)$ parametrize families of elliptic curves with some extra {\it level $p$ structure}
via their moduli interpretation. Namely \begin{itemize}
\item[$\bullet$] non cuspidal points in $X(p)$ correspond to triples $(\E,P_1,P_2)$ where $\E$ is an elliptic curve
(defined over $\mathbb{C}$) and $P_1$, $P_2$ are points of order $p$ generating the whole group $\E[p]$;
\item[$\bullet$] non cuspidal points in $X_1(p)$ correspond to couples $(\E,Q)$ where $\E$ is an elliptic curve
(defined over $\mathbb{C}$) and $Q$ is a point of order $p$
\end{itemize}
(all these correspondences have to be considered modulo the natural isomorphisms).

\noindent
Let $K$ be a number field. The points of $X(p)$ or $X_1(p)$ which are rational over $K$ will be
denoted by $X(p)(K)$ or $X_1(p)(K)$. Obviously a point is $K$-rational if and only if it is
$\Gal(\overline{\Q}/K)$-invariant (in particular, with the representation provided above one needs an elliptic
curve $\E$ defined over $K$).

\begin{definition}\label{DefExMod}
A point $(\E,P_1,P_2)\in X(p)$ (resp. $(\E,P_1)\in X_1(p)\,$) is said to be {\em exceptional}
if $p$ is exceptional for $\E$. In particular, if $\E$ is defined over $K$, we call such a point {\em Borel exceptional}
(resp. {\em Cartan exceptional}) if $\Gal(K(\E[p])/K)$ is contained in a Borel subgroup
(resp. in the normalizer of a Cartan subgroup).
\end{definition}

\noindent The following is an easy consequence of Theorem \ref{ordinates1++}.

\begin{corollary}
Assume $p\geqslant 5$; let $\E$ be an elliptic curve defined over a number field $K$ and let $P\in \E[p]$ be of order $p$.
For any field $L$ containing $K(x(P),\z_p)$ or containing $K(y(P),\z_p)$ and for any point $Q\in \E[p]$
independent from $P$, we have
\[ (\E,Q) \in X_1(p)(L) \iff (\E,P,Q)\in X(p)(L) \ .\]
\end{corollary}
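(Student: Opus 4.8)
The plan is to convert both rationality conditions into containments of coordinate fields and then read the equivalence off Theorem \ref{ordinates1++}. Since $\E$ is defined over $K\subseteq L$, a modular point attached to $\E$ is $L$-rational precisely when it is fixed by $\Gal(\ov{\Q}/L)$ up to the automorphisms $\pm\Id$ of $\E$. This gives the dictionary I would set up first: writing elements of $\Gal(\ov{\Q}/L)$ as matrices on the basis $\{P,Q\}$ of $\E[p]$ (note $Q$ has order $p$ and is independent from $P$, so $\{P,Q\}$ is indeed a $\Z$-basis), one has $(\E,Q)\in X_1(p)(L)$ iff $\s Q=\pm Q$ for all $\s$, i.e. iff $x(Q)\in L$; and $(\E,P,Q)\in X(p)(L)$ iff every $\s$ acts as $\pm\Id$ on $\E[p]$, i.e. iff $\s$ fixes all $x$-coordinates of $p$-torsion, i.e. iff $L\supseteq K_{p,x}$.

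With this dictionary the implication $(\E,P,Q)\in X(p)(L)\Longrightarrow(\E,Q)\in X_1(p)(L)$ is immediate and uses no hypothesis on $L$: if $\s=\pm\Id$ then a fortiori $\s Q=\pm Q$. All the substance is in the converse, which is where the hypothesis on $L$ feeds into Theorem \ref{ordinates1++}. So I would assume $(\E,Q)\in X_1(p)(L)$, that is $x(Q)\in L$, and fix $\s\in\Gal(\ov{\Q}/L)$.

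In the case $L\supseteq K(x(P),\z_p)$ the argument is elementary: $\s$ fixes $x(P)$, $x(Q)$ and $\z_p$, so $\s P=\pm P$ and $\s Q=\pm Q$, i.e. $\s=\left(\begin{array}{cc}\pm1&0\\0&\pm1\end{array}\right)$; since $\s$ fixes $\z_p=e_p(P,Q)$ we have $\det\s=1$, the two signs coincide, and $\s=\pm\Id$. In the case $L\supseteq K(y(P),\z_p)$ I would instead invoke Theorem \ref{ordinates1++} with the \emph{reversed} basis $\{Q,P\}$: the associated root of unity is $e_p(Q,P)=\z_p^{-1}$, which generates the same field $K(\z_p)$, so the theorem yields $K_p=K(x(Q),\z_p,y(P))$. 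All three generators lie in $L$ (two by hypothesis, and $x(Q)$ by $X_1$-rationality), whence $K_p\subseteq L$ and $\s=\Id$ on $\E[p]$. In both cases $\s$ acts as $\pm\Id$, so $(\E,P,Q)\in X(p)(L)$, completing the equivalence.

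The main obstacle is getting the moduli dictionary exactly right, and in particular handling the automorphism $-\Id$ so that $X_1(p)$-rationality reads as $x(Q)\in L$ (rather than $Q\in\E(L)$) and $X(p)$-rationality reads as $L\supseteq K_{p,x}$; the hypothesis $p\geq 5$ is used here to guarantee $y(P),y(Q)\neq 0$ and that $\pm\Id$ are the only relevant automorphisms in the generic case. Once the translation is fixed, the one genuine idea is to apply Theorem \ref{ordinates1++} to the \emph{reordered} basis in the $y(P)$-case; this is precisely the mechanism by which a single $X_1$-datum, together with $\z_p$ and one coordinate of $P$, forces the entire level-$p$ structure to become rational. (If one adopts the rigidified convention in which $X_1(p)$-rationality means $Q\in\E(L)$, then both cases become completely symmetric, following from $K_p=K(x(P),\z_p,y(Q))$ and $K_p=K(x(Q),\z_p,y(P))$ respectively.)
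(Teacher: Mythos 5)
Your proposal is correct and takes essentially the same route as the paper: the implication $\Leftarrow$ is immediate, and for $\Rightarrow$ the paper likewise applies Theorem \ref{ordinates1++} to both orderings of the basis $\{P,Q\}$, obtaining $L\supseteq K(x(P),\z_p,y(Q))=K_p$ in the first case and $L\supseteq K(x(Q),\z_p,y(P))=K_p$ in the second (your ``reversed basis'' trick, with $e_p(Q,P)=\z_p^{-1}$ generating the same cyclotomic field). The only difference is one of bookkeeping: you carefully set up the moduli dictionary modulo the automorphism $-\Id$ (so that $X_1(p)$-rationality reads $x(Q)\in L$, which even makes your first case a direct matrix computation), whereas the paper tacitly uses the rigidified convention $Q\in\E(L)$ --- precisely the variant you describe in your closing parenthesis, under which both cases follow symmetrically from the theorem.
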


\begin{proof} The arrow $\Leftarrow$ is obvious. Now assume $(\E,Q)\in X_1(p)(L)$, then
\[ L\supseteq K(x(P),\z_p,y(Q))=K_p\quad {\rm or}\quad L\supseteq K(y(P),\z_p,x(Q))=K_p \]
(both final equalities hold because of Theorem \ref{ordinates1++}). Hence $(\E,P,Q)\in X(p)(L)$.
\end{proof}

\noindent
It would be interesting to describe the families of elliptic curves for which the previous corollary
becomes trivial, i.e., curves for which $K(x(P),\z_p)$ or $K(y(P),\z_p)$ contain $K(x(P),y(P))$. Some examples
are provided by the exceptional primes $\equiv 1\pmod{3}$ for which $K(\z_p,y(P))=K_p$.

\noindent On exceptional points we have the following

\begin{corollary}
Assume $p\geq 53$ is unramified in $K/\Q$ and $p\not\equiv 1 \pmod{3}$, then, for any field $L\supseteq K(\z_p)$,
the $L$-rational exceptional points of $X(p)$ and $X_1(p)$ are associated to the same elliptic curves. The same statement
holds for $p\equiv 1\pmod{3}$ as well if we restrict to Cartan exceptional points.
\end{corollary}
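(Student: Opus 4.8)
The plan is to prove the stated curve‑equality by establishing the two inclusions separately, the nontrivial one resting on the collapse $K_p=K(\z_p,y_2)$ provided by the exceptional‑primes analysis. The trivial inclusion comes from the forgetful covering $X(p)\to X_1(p)$, $(\E,P_1,P_2)\mapsto(\E,P_2)$, under which every $L$‑rational exceptional point of $X(p)$ maps to one of $X_1(p)$ lying over the same curve; hence the curves occurring on $X(p)(L)$ are among those occurring on $X_1(p)(L)$. For the reverse inclusion I start from an $L$‑rational exceptional point $(\E,Q)$ of $X_1(p)$, with $Q$ of order $p$ and $L\supseteq K(\z_p)$, and must show that $\E$ already supports an $L$‑rational full level structure, i.e. that $\Gal(K_p/L)\subseteq\{\pm\Id\}$. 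The first move is Lemma \ref{LVLemma}: as $p\geq 53$ is unramified and exceptional, $\Gal(K_p/K)$ avoids case {\bf 4} of Lemma \ref{SubGL2}, so it sits in a Borel subgroup, a Cartan subgroup, or the normalizer of one.

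First I would settle the Cartan and normalizer cases, which is where the argument is insensitive to $p\bmod 3$ and thus covers the final sentence of the statement. For $\s\in\Gal(\overline{L}/L)$ with image $\bar\s$, the containment $\z_p\in L$ forces $\det\bar\s=1$, while $L$‑rationality of $(\E,Q)$ forces $\bar\s Q=\pm Q$. The elementary input is that a determinant‑$1$ element of the normalizer of a Cartan subgroup that fixes a nonzero vector up to sign is necessarily $\pm\Id$: a diagonal such element is pinned down by its eigenvalues (the condition that the first eigenvalue be $\pm1$ already forces the second to be $\pm1$ with the same sign, leaving no unipotent freedom), while an anti‑diagonal element squares to $-\Id$ and so can send no nonzero vector to $\pm$ itself. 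Thus $\Gal(K_p/L)\subseteq\{\pm\Id\}$ and $\E$ occurs on $X(p)(L)$. This dovetails with Theorem \ref{Cartan}: when $p\equiv 1\pmod 3$ the order‑$3$ elements it allows act on every nonzero vector by a primitive cube root of unity, so they neither lie in $\{\pm\Id\}$ nor preserve any $Q$ up to sign — whence such a curve lies on neither $X(p)(L)$ nor $X_1(p)(L)$, and the equality is maintained.

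Next, for $p\not\equiv 1\pmod 3$ I would handle the Borel case through Theorem \ref{Borel}. Choosing the basis so that $P_1$ spans the Galois‑stable line, that theorem yields $K_p=K(\z_p,y(P_2))$ for a complementary $P_2$; its proof uses only that $P_1$ is the stable vector, so rerunning it with $P_2:=Q$ is legitimate as soon as $Q$ is not a multiple of $P_1$, and then $K_p=K(\z_p,y(Q))$. Since $\z_p,y(Q)\in L$ this gives $K_p\subseteq L$ and puts $\E$ on $X(p)(L)$. Conceptually this is the preceding corollary with its hypothesis $K(x(P),\z_p)\subseteq L$ relaxed to $K(\z_p)\subseteq L$, the slack being absorbed exactly by the exceptional collapse $K_p=K(\z_p,y_2)$.

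The hard part, and the step I expect to be the main obstacle, is the residual Borel subcase in which the given $Q$ generates the stable line itself. There $\bar\s Q=\pm Q$ constrains only the diagonal of $\bar\s$ and leaves its unipotent part free, so $K(\z_p,y(Q))$ may be a proper subfield of $K_p$ and the argument of the previous paragraph collapses: a curve carrying an $L$‑rational generator of its stable line but with nontrivial unipotent contribution to $\Gal(K_p/L)$ sits on $X_1(p)(L)$ without its full $p$‑torsion becoming $L$‑rational. To preserve the equality one must show this configuration adds no new curve — for instance by using the three collinear points on the line $y=y(Q)$ together with the determinant‑$1$ condition to manufacture a complementary order‑$p$ point that is again $L$‑rational up to sign, thereby reducing to the complement case above — or else read the $X_1(p)$ level structure as chosen off the isogeny line. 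I note that the exclusion of the Borel case when $p\equiv 1\pmod 3$ reflects a separate deficiency, namely that Theorem \ref{Borel} then yields only $[K_p:K(\z_p,y(Q))]\in\{1,3\}$, so even a complementary $Q$ need not force $K_p\subseteq L$. Making both of these points precise is the delicate heart of the argument.
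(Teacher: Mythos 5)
Your proposal reproduces, for everything it actually completes, the same route as the paper: the forgetful covering gives the trivial inclusion, and the substantive inclusion is reduced to showing that an exceptional $(\E,Q)\in X_1(p)(L)$ forces $(\E,Q,R)\in X(p)(L)$. The paper's proof is exactly this reduction followed by the one-line invocations $L\supseteq K(\z_p,y(Q))=K_p$ (for $p\not\equiv 1\pmod 3$, by Theorems \ref{Borel} and \ref{Cartan}) and $L\supseteq K(\z_p,x(Q),y(Q))=K_p$ (Cartan case, $p\equiv 1\pmod 3$). Your Cartan/normalizer treatment is in fact more robust than the paper's citation: the direct computation that a determinant-one element of the normalizer of a Cartan subgroup satisfying $\s Q=\pm Q$ must be $\pm\Id$ is correct (the antidiagonal elements square to $-\Id$, and the diagonal ones are pinned down as you say), and it works for every order-$p$ point $Q$, eigenvector or not; it also handles the moduli-theoretic ambiguity $\s Q=\pm Q$ that the paper glosses over when it writes $y(Q)\in L$. (One slip in a side remark: an order-$3$ element $\left(\begin{smallmatrix} a & 0\\ 0 & a^{-1}\end{smallmatrix}\right)$ with $1+a+a^2=0$ scales only the two eigenvectors by primitive cube roots of unity and does not act by a scalar on other vectors; your conclusion that it fixes no $Q$ up to sign is nevertheless right.) Likewise, rerunning Theorem \ref{Borel} with $P_2:=Q$ when $Q$ is complementary to the stable line is legitimate for precisely the reason you give, and is what the paper implicitly does.

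The subcase you flag as the ``delicate heart'' --- $Q$ generating the Galois-stable line in the Borel case --- is a genuine gap in your write-up, since you only sketch possible repairs; but you should know that the paper's proof does not address it either. Theorems \ref{Borel} and \ref{Cartan} prove $K_p=K(\z_p,y_2)$ for the adapted basis, in which $P_2$ complements the stable line, so applying them to an arbitrary $Q$ tacitly assumes $Q\notin\la P_1\ra$. The assumption is not removable by the same argument: every element $\pm\left(\begin{smallmatrix}1 & b\\ 0 & 1\end{smallmatrix}\right)$ fixes $\z_p$ and fixes $Q=P_1$ up to sign, so $K(\z_p,y(Q))$ can be a proper subfield of $K_p$ of index up to $p$; redoing the collinearity analysis with the roles of $P_1$ and $P_2$ swapped, the possibility $\s(P_1)=P_1$ with $b\neq 0$ survives for every $p$, with no mod-$3$ obstruction. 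Consequently, if there existed an exceptional $(\E,Q)\in X_1(p)(L)$ with $Q$ spanning the stable line and nontrivial unipotent image in $\Gal(K_p/L)$, then no point of $X(p)$ above $\E$ would be $L$-rational, and neither your proposal nor the paper shows that such configurations are excluded by the hypotheses ($p\geq 53$, unramified, exceptional). So your diagnosis is correct and in this respect sharper than the published argument: to match the paper you need only treat $Q$ as the second basis vector without comment, whereas a fully airtight proof must either rule out the stable-line configuration or restrict the statement to it.
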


\begin{proof} We only need to check that if $(\E,Q)\in X_1(p)(L)$ is exceptional, then  $(\E,Q,R)\in X(p)(L)$,
for any $R$ completing $Q$ to a $\Z$-basis of $\E[p]$. For $p\not\equiv 1\pmod{3}$, this immediately follows from
\[ L\supseteq K(\z_p,y(Q)) = K_p, \]
by Theorems \ref{Borel} and \ref{Cartan}. If $p\equiv 1\pmod{3}$ (and $(\E,Q)$ is Cartan exceptional), then
Theorem \ref{Cartan} shows that
\[ L\supseteq K(\z_p,x(P),y(Q)) = K_p \ .\qedhere\]
\end{proof}

\section{Fields $K(\E[3])$}\label{Secm=3}
In this section we generalize the classification of the number fields $\Q(\E[3])$, appearing in \cite{BP},
to the case when the characteristic of the base field $K$ is different from 2 and 3.
Under the last assumption on $K$ we have that $\E$ can be written in Weierstrass form $y^2=x^3+Ax+B$.
We recall that the four $x$-coordinates of the $3$-torsion points of $\E$
are the roots of the polynomial $\varphi_3:=x^4+2Ax^2+4Bx-A^2/3$.
Solving $\varphi_3$ with radicals, we get explicit expressions for the $x$-coordinates and we recall that
for $m=3$ being $\Z$-independent is equivalent to having different $x$-coordinates. Let $\D:=-432B^2-64A^3$
be the discriminant of the elliptic curve.
If $B\neq 0$, the roots of $\varphi_3$ are
\[ x_1=-\frac{1}{2}\sqrt{\frac{\sqrt[3]{\Delta}-8A}{3}-\frac{8B\sqrt{3}}{\sqrt{-\sqrt[3]{\Delta}-4A}}}
+\frac{\sqrt{-\sqrt[3]{\Delta}-4A}}{2\sqrt{3}}\ ,\]
\[ x_2=\frac{1}{2}\sqrt{\frac{\sqrt[3]{\Delta}-8A}{3}-\frac{8B\sqrt{3}}{\sqrt{-\sqrt[3]{\Delta}-4A}}}
+\frac{\sqrt{-\sqrt[3]{\Delta}-4A}}{2\sqrt{3}}\ ,\]
\[ x_3=-\frac{1}{2}\sqrt{\frac{\sqrt[3]{\Delta}-8A}{3}+\frac{8B\sqrt{3}}{\sqrt{-\sqrt[3]{\Delta}-4A}}}
-\frac{\sqrt{-\sqrt[3]{\Delta}-4A}}{2\sqrt{3}}\ ,\]
\[ x_4=\frac{1}{2}\sqrt{\frac{\sqrt[3]{\Delta}-8A}{3}+\frac{8B\sqrt{3}}{\sqrt{-\sqrt[3]{\Delta}-4A}}}
-\frac{\sqrt{-\sqrt[3]{\Delta}-4A}}{2\sqrt{3}}\ .\]
(where we have chosen one square root of $\c$ and one cubic root for $\D$; since $\z_3\in K_3$ the degree $[K_3:K]$
will not depend on this choice).

To ease notation, we define
\[  \c :=\frac{-\sqrt[3]{\D}-4A}{3} \ ,\ \d:= \frac{(-\c-4A)\sqrt{\c}-8B}{\sqrt{\c}} \ \ {\rm and}\ \
\d' := \frac{(-\c-4A)\sqrt{\c}+8B}{\sqrt{\c}}\ .  \]
Thus, when $B\neq 0$, the roots of $\varphi_3$ are
\[  x_1  =\frac{1}{2}(-\sqrt{\d}+\sqrt{\c})\ ,\
x_2=\frac{1}{2}(\sqrt{\d}+\sqrt{\c}) \ ,\
 x_3=\frac{1}{2}(-\sqrt{\d'}-\sqrt{\c}) \ {\rm and } \
 x_4=\frac{1}{2}(\sqrt{\d'}-\sqrt{\c}) \ . \]
The corresponding points $P_i:=(x_i,\sqrt{x_i^3+Ax_i+B})$ have order $3$ and are pairwise $\Z$-in\-de\-pen\-dent
(this would hold with any choice for the sign of the square root providing the $y$-coordinate).
For completeness, we show the expressions of $y_1$, $y_2$, $y_3$ and $y_4$ in terms of $A$, $B$, $\c$, $\d$ and $\d'$:
\[ y_1=\sqrt{\frac{(-\c\sqrt{\c}+4B)\sqrt{\d}+\c\d}{4\sqrt{\c}}} \ \ , \ \
y_2:=\sqrt{\frac{(\c\sqrt{\c}-4B)\sqrt{\d}+\c\d}{4\sqrt{\c}}} \ ,\]
\[ y_3=\sqrt{\frac{(-\c\sqrt{\c}-4B)\sqrt{\d'}-\c\d'}{4\sqrt{\c}}}\ \ , \ \
y_4=\sqrt{\frac{(\c\sqrt{\c}+4B)\sqrt{\d'}-\c\d'}{4\sqrt{\c}}}\ .\]

If $B=0$, then $\c=0$ too and the formulas provided above do not hold anymore.
The $x$-coordinates are now the roots of $\varphi_3=x^4+2Ax^2-A^2/3\,$. Let
\[ \beta:=-\left(\frac{2\sqrt{3}}{3}+1\right)A \ \ {\rm and}\ \  \eta:=\left(\frac{2\sqrt{3}}{3}-1\right)A,\]
then the roots of $\varphi_3$ are $x_1= \sqrt{\beta}$, $x_2=-\sqrt{\beta}$, $x_3=\sqrt{\eta}$ and $x_4=-\sqrt{\eta}$.
Furthermore
\[ y_1=\sqrt{\frac{-2A\sqrt{\beta}}{\sqrt{3}}}=\sqrt{\frac{-2A}{3}\sqrt{-2A\sqrt{3}-3A}} \ .\]

\noindent Using the results of the previous sections and the explicit formulas, we can now give the following
description of $K_3$ in terms of generators.

\begin{proposition} \label{description}
In any case $K_3=K(x_1,y_1,y_2)$. Moreover\begin{itemize}
\item[{\bf 1.}] if $B\neq 0$, then $K_3=K(\sqrt{\c},\z_3,y_1)$;
\item[{\bf 2.}] if $B= 0$, then $K_3=K(\z_3,y_1)$.
\end{itemize}
\end{proposition}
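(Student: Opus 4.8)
The plan is to set $L:=K(x_1,y_1,y_2)$, note $L\subseteq K_3$, and reduce the first assertion to showing $x_2\in L$. Because $\varphi_3$ has no cubic term its four roots sum to $0$, and the explicit formulas give $x_1+x_2=\sqrt{\c}$; hence it suffices to prove $\sqrt{\c}\in L$, for then $x_2=\sqrt{\c}-x_1\in L$ and $K_3=K(x_1,x_2,y_1,y_2)=L$.

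First I would extract two relations from the group law. Summing the two identities in \eqref{eqx34} and using $x_3+x_4=-(x_1+x_2)$ gives $(x_1+x_2)(x_1-x_2)^2=2(y_1^2+y_2^2)$, whereas \eqref{eqy1y2} reads $y_1y_2=\tfrac14(x_4-x_3)(x_1-x_2)^2$. I would then substitute the explicit values $(x_1-x_2)^2=\d$, $x_4-x_3=\sqrt{\d'}$ and $x_1+x_2=\sqrt{\c}$, and eliminate $\d,\d'$ through $\d=-\c-4A-8B/\sqrt{\c}$ and $\d'=-\c-4A+8B/\sqrt{\c}$. Writing $s:=\sqrt{\c}$, $F:=2(y_1^2+y_2^2)\in L$ and $G:=y_1y_2\in L$, the first relation becomes the cubic $s^3+4As+(F+8B)=0$, while squaring the second (so that $16G^2=\d^2\d'$) and using this cubic to remove $s^3$ collapses it to a single \emph{linear} equation $-64AG^2\,s=\Phi$ with $\Phi\in L$ an explicit polynomial in $F,G^2,B$. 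Since $y_1,y_2\neq 0$ (the points have order $3$), $G\neq 0$, so this solves for $s=\sqrt{\c}\in L$ as soon as $A\neq 0$, giving $K_3=L$ in that case.

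The step I expect to be the main obstacle is the case $A=0$ (that is, $j(\E)=0$). There one has $\c\sqrt{\c}=4B$, so the leading terms of the radicands of $y_1$ and $y_2$ both vanish and $y_1=y_2$: geometrically $P_1$ and $P_2$ lie on a common horizontal line. The linear equation above then degenerates, its coefficient $-64AG^2$ being $0$, so the elimination no longer isolates $\sqrt{\c}$. Recovering $x_2\in L$ in this situation has to be carried out by a finer, field-dependent analysis of whether $\z_3\in K(x_1,y_1)$, and this is the genuinely delicate point; the easy case $B=0$, by contrast, is immediate for the first assertion, since then $x_2=-x_1$ and $L=K_3$ with no computation.

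For the two refinements the explicit formulas give clean arguments. If $B\neq 0$ then $\c\neq 0$, so $\d=-\c-4A-8B/\sqrt{\c}\in K(\sqrt{\c})$, whence $x_1x_2=\tfrac14(\c-\d)\in K(\sqrt{\c})$ and, together with $x_1+x_2=\sqrt{\c}$, we get $K(x_1+x_2,x_1x_2)=K(\sqrt{\c})$; Theorem \ref{dihedral} then yields $K_3=K(x_1+x_2,x_1x_2,\z_3,y_1)=K(\sqrt{\c},\z_3,y_1)$. If $B=0$ the formulas give $y_2^2=-y_1^2$, so $\sqrt{-1}=y_2/(\pm y_1)\in K_3$, and $y_1^4=\tfrac{4A^2}{3}\beta$ is affine-linear in $\sqrt3$ with nonzero leading coefficient; as $y_1^4\in K(y_1)$ this recovers $\sqrt3\in K(y_1)$. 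Then $\sqrt{-3}\in K(\z_3)$ gives $\sqrt{-1}=\sqrt{-3}/\sqrt3\in K(\z_3,y_1)$, the relation $y_1^2=-\tfrac{2A}{\sqrt3}\sqrt{\beta}$ gives $x_1=\sqrt{\beta}\in K(\z_3,y_1)$, and $y_2=\pm\sqrt{-1}\,y_1\in K(\z_3,y_1)$; hence $K_3=K(x_1,y_1,y_2)\subseteq K(\z_3,y_1)\subseteq K_3$, i.e. $K_3=K(\z_3,y_1)$.
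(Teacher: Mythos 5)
Where your proof works it is correct, and it actually supplies a step that the paper's own proof omits. The paper deduces the first assertion (for $B\neq 0$) from the identity $y_1^2+y_2^2=-4B-\frac{\c\sqrt{\c}}{2}-2A\sqrt{\c}$ with the single word ``therefore''; as stated, that identity only exhibits $s=\sqrt{\c}$ as a root of the cubic $s^3+4As+\bigl(2(y_1^2+y_2^2)+8B\bigr)=0$ over $K(y_1^2,y_2^2)$, which is not membership. Your second relation $16(y_1y_2)^2=\d^2\d'$, reduced modulo that cubic to the linear equation $-64A(y_1y_2)^2\,s=\Phi$, is exactly the missing argument, and it is valid whenever $A\neq 0$ (with $y_1y_2\neq 0$ as you note). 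Your items {\bf 1} and {\bf 2} coincide in substance with the paper's: both rest on Theorem \ref{dihedral}, via $x_1x_2=\frac{\c-\d}{4}\in K(\sqrt{\c})$ when $B\neq 0$, and via $x_1+x_2=0$ with $\sqrt{3}\in K(y_1)$ when $B=0$.

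The case $A=0$, $B\neq 0$ that you leave open is, however, a genuine gap in your proof of the first assertion --- and, remarkably, your pessimism is vindicated: it cannot be closed, because with the paper's admissible choices of radicals the assertion is false there. For $A=0$ one has $\c^3=16B^2$, hence $\c\sqrt{\c}=\pm 4B$. If $\c\sqrt{\c}=4B$, then $y_1^2=y_2^2=-3B$ (your horizontal-line configuration): taking $K=\Q$ and $\E:y^2=x^3+3$, where $K_3=\Q(\sqrt{3},i,\sqrt[3]{12})$ has degree $12$, one gets $K(x_1,y_1,y_2)=\Q(x_1,i)$ with $x_1^3=-12$, of degree $6$, for either labelling of $x_1,x_2$. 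If $\c\sqrt{\c}=-4B$, then $\d=\c$ and $\{x_1,x_2\}=\{0,\sqrt{\c}\}$; the labelling $x_1=0$ gives $K(x_1,y_1,y_2)=\Q(\sqrt{3},i)$, of degree $4$. So the statement only becomes true after normalizing the choices (e.g.\ $\c\sqrt{\c}=-4B$ and $x_1=\sqrt{\c}$, whence $K(x_1,y_1,y_2)=K(x_1,\sqrt{-3B},\sqrt{B})=K_3$), and the paper's proof fails at precisely your obstacle: for $A=0$ its displayed identity collapses to $y_1^2+y_2^2=-2B$ or $-6B\in K$, and the ``therefore'' silently inverts a degenerate cubic. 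Items {\bf 1} and {\bf 2} are unaffected, since neither uses the first assertion.
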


\begin{proof}If $B\neq 0$, then
\[ y_1^2+y_2^2 = -4B-\frac{\c^2}{2\sqrt{\c}}-2A\sqrt{\c} \ .\]
Therefore $x_1+x_2=\sqrt{\c}\in K(y_1^2,y_2^2)$ and $x_2\in K(x_1,y_1^2,y_2^2)$, which immediately
yields $K_3=K(x_1,y_1,y_2)$. Moreover, by Theorem \ref{dihedral}, $K_3=K(x_1+x_2,x_1x_2,\z_3,y_1)$.
So, since
\[ x_1x_2 = \frac{\c}{2}+A+\frac{2B}{\sqrt{\c}} \in K(x_1+x_2)=K(\sqrt{\c})\ ,\]
one has $K_3=K(\sqrt{\c},\z_3,y_1)$.\\
If $B=0$, then $x_1=\sqrt{\beta}=-x_2$ so $K_3=K(x_1,y_1,y_2)$ is obvious.
The final statement follows from $x_1+x_2=0$, $K(x_1x_2)=K(\sqrt{3})\subseteq K(y_1)$ and Theorem \ref{dihedral}.
\end{proof}

\noindent We shall use the statements of Proposition \ref{description} to describe the fields
$K_3$ in terms of the degree $[K_3:K]$ and the Galois groups $\Gal(K_3/K)$.

\subsection{The degree $[K_3:K]$}\label{SecdegK3}
Because of the embedding $$\Gal(K_n/K)\hookrightarrow \GL_2(\Z/n\Z)$$ \noindent one has that
$[K_3:K]$ is a divisor of $|\GL_2(\Z/3\Z)|=48$ (in particular, if $B=0$, then $K_3=K(\z_3,y_1)$ and $y_1$
has degree at most 8 over $K$ so $d:=[K_3:K]$ divides 16). Therefore
$d\in \Omega:=\{1,2,3,4,6,8,12,16,24,48\}.$ In \cite{BP}, we proved that the minimal set for $[\Q(\E[3]):\Q]$ is
$\widetilde{\Omega}:=\{2,4,6,8,12,16,48\}$ and showed also explicit examples for any degree $d\in\widetilde{\Omega}$. When
$K$ is a number field we can get also examples of degree $1,3$ and $24$: it suffices to take the
curves in \cite{BP} with degree $d\in \{2,6,48\}$ and choose $K=\Q(\zeta_3)$ as base field.
%For examples of degree $d=1$, 3 and 24 one simply takes curves from there with $d=2,\ 6$ and $48$ and put $K=\Q(\z_3)$.
In general, once we have a curve $\E$ defined over $\Q$ with $[\Q(\E[3]):\Q]=48$, we produce examples of any degree $d\in\Omega$
by simply considering the same curve over subfields $K$ of $\Q(\E[3])$ (obviously for those $K$ one has $K_3=\Q(\E[3])\,$).

\begin{theorem} \label{classification_m=3}
With notations as above let $d:=[K_3:K]$.
Consider the following conditions for $B\neq 0$
\[ \begin{array}{lll}
{\bf A1.}\ \sqrt[3]{\D}\notin K\,;\hspace{0.15cm} & {\bf B1.}\ \sqrt{\d}\notin K(\sqrt{\c})\,;\hspace{0.15cm}
& {\bf C.}\ \z_3\notin K(\sqrt{\c},y_1)\,;\\
{\bf A2.}\ \sqrt{\c}\notin K(\sqrt[3]{\Delta})\,;\hspace{0.15cm} & {\bf B2.}\ y_1\notin K(\sqrt{\d})\,; &
\end{array} \]
and the corresponding ones for $B=0$
\[ \begin{array}{ll}
{\bf D1.}\ \sqrt{3}\notin K\,;\qquad & {\bf E.}\ \z_3\notin K(y_1)\,; \\
{\bf D2.}\ \sqrt{\beta}\notin K(\sqrt{3})\,;\qquad & \\
{\bf D3.}\ y_1\notin K(\sqrt{\beta})\ . & \\
\end{array} \]
Then the degrees are the following
\begin{center}\begin{tabular}{|c|c|c|c|c|c|}
\hline
$B$ & $d$ & {\em holding conditions}  & $B$ & $d$ & {\em holding conditions}  \\
\hline
$\neq 0$ & {\em 48} & {\bf A1}, {\bf A2}, {\bf B1}, {\bf B2}, {\bf C} & $\neq 0$ & {\em 4} & {\bf A2}, {\bf B1} \\
\hline
$\neq 0$ & {\em 24} & {\bf A1}, {\bf B1}, {\bf B2}, {\bf C} & $\neq 0$ & {\em 4} & {\bf A2}, {\bf B2} \\
\hline
$\neq 0$ & {\em 24} & {\bf A1}, {\bf A2}, {\bf B1}, {\bf B2} & $\neq 0$ & {\em 4} & {\bf B1}, {\bf B2} \\
\hline
$\neq 0$ & {\em 16} & {\bf A2}, {\bf B1}, {\bf B2}, {\bf C} & $\neq 0$ & {\em 3} & {\bf A1} \\
\hline
$\neq 0$ & {\em 12} & {\bf A1}, {\bf A2}, {\bf B1} & $\neq 0$ & {\em 2} & 1 among {\bf A2}, {\bf B1}, {\bf B2} \\
\hline
$\neq 0$ & {\em 12} & {\bf A1}, {\bf A2}, {\bf B2} & $0$ & {\em 16} & {\bf D1}, {\bf D2}, {\bf D3}, {\bf E} \\
\hline
$\neq 0$ & {\em 12} & {\bf A1}, {\bf B1}, {\bf B2} & $0$ & {\em 8} & {\bf D2}, {\bf D3}, {\bf E} \\
\hline
$\neq 0$ & {\em 8} & {\bf B1}, {\bf B2}, {\bf C} & $0$ & {\em 4} & {\bf D1}, {\bf D3} \\
\hline
$\neq 0$ & {\em 8} & {\bf A2}, {\bf B1}, {\bf B2} & $0$ & {\em 4} & {\bf D2}, {\bf D3} \\
\hline
$\neq 0$ & {\em 6} & {\bf A1} and 1 among {\bf A2}, {\bf B1}, {\bf B2}  & $0$ & {\em 2} & {\bf D1} \\
\hline
 & & & $0$ & {\em 2} & {\bf D3} \\
\hline
\end{tabular}\end{center}
\end{theorem}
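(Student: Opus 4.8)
The plan is to reduce everything to the generating sets of Proposition~\ref{description} --- namely $K_3=K(\sqrt{\c},\z_3,y_1)$ when $B\neq 0$ and $K_3=K(\z_3,y_1)$ when $B=0$ --- and then, in each case, to build a tower of radical subextensions whose successive degrees are controlled one at a time by the conditions in the statement. The easy part is the tower and the resulting \emph{a priori} degree formula; the real content is to decide which combinations of the conditions are actually compatible, since only those produce entries of the table.

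For $B\neq 0$ I would work with the chain
\[ K\subseteq K(\sqrt[3]{\D})=K(\c)\subseteq K(\sqrt{\c})\subseteq K(\sqrt{\d})\subseteq K(\sqrt{\c},y_1)\subseteq K_3. \]
Here $K(\sqrt[3]{\D})=K(\c)$ because $\sqrt[3]{\D}=-3\c-4A$; one has $\sqrt{\c}\in K(\sqrt{\d})$ by solving the relation $\d=(-\c-4A)-8B/\sqrt{\c}$ for $\sqrt{\c}$ (legitimate since $B\neq 0$), so that $K(\sqrt{\c})\subseteq K(\sqrt{\d})=K(\sqrt{\c},\sqrt{\d})$; and $\sqrt{\d}\in K(\sqrt{\c},y_1)$ because the explicit formula exhibits $y_1^2$ as a nonconstant affine function of $\sqrt{\d}$ with coefficients in $K(\sqrt{\c})$, whence $K(\sqrt{\c},y_1)=K(\sqrt{\d},y_1)$ and the last step merely adjoins $\z_3$. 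The five steps have degree $3,2,2,2,2$, nontrivial exactly when A1, A2, B1, B2, C hold respectively, giving the \emph{a priori} value $[K_3:K]=3^{a_1}2^{a_2+b_1+b_2+c}$ with $0/1$ exponents (all exponents $0$ giving the trivial degree $1$, simply omitted from the table).

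The step I expect to be the main obstacle is proving that these conditions are not independent: one must show that C can hold only if both B1 and B2 hold, i.e. that $\z_3\notin K(\sqrt{\c},y_1)$ forces $\sqrt{\d}\notin K(\sqrt{\c})$ and $y_1\notin K(\sqrt{\d})$. Plain degree counting does not suffice. For instance, if B2 fails then $K(\sqrt{\c},y_1)=K(x_1,x_2)$, and whether $\z_3$ lies there is governed by $y_1y_2$ through Theorem~\ref{zetaversusy}; the apparent extra configuration is exactly case {\bf 2} of Lemma~\ref{casi1} (with $\z_3\notin K(x_1,x_2)$ by Lemma~\ref{zetafuoriL}). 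To exclude it I would argue group-theoretically: in the transparent case $\sqrt{\c}\in K$ the field $K(x_1,x_2)=K(\sqrt{\d})$ is Galois over $K$, yet the subgroup of $\GL_2(\Z/3\Z)$ forced by case {\bf 2} ceases to be normal as soon as an element swapping $x_1$ and $x_2$ (which B1 requires) is present --- a contradiction. The general case demands the same kind of bookkeeping over the admissible subgroups of $\GL_2(\Z/3\Z)$ and of the normalizer of a Cartan subgroup, checking normality along the tower.

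For $B=0$ the parallel tower is
\[ K\subseteq K(\sqrt{3})\subseteq K(\sqrt{\beta})\subseteq K(\sqrt{\beta},y_1)\subseteq K_3=K(\z_3,y_1), \]
using $\sqrt{3}\in K(\beta)$ and $y_1^2=\tfrac{-2A}{3}\sqrt{3}\,\sqrt{\beta}$, so the four steps are governed by D1, D2, D3, E and $[K_3:K]=2^{d_1+d_2+d_3+e}$ \emph{a priori}. Reading off the table shows three dependencies must be established, namely $\text{D2}\Rightarrow\text{D3}$, $\text{E}\Rightarrow\text{D2}$ and $\text{D1}\wedge\text{D2}\Rightarrow\text{E}$. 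These are again finer than pure field degrees, the complication being that $\sqrt{3}$ and $\z_3$ differ by the twist $\sqrt{-1}$ (recall $2\z_3+1=\sqrt{-3}$); their proof combines the explicit radical identities above with the constraint that the determinant (equivalently the Weil-pairing action on $\z_3$) imposes on the Galois group. Once all dependencies are in hand, both tables follow from a purely mechanical case analysis over the admissible $0/1$ patterns.
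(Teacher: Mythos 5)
Your overall architecture is exactly the paper's: reduce to the generating sets of Proposition~\ref{description}, build the radical towers in which each condition governs one quadratic step (cubic for \textbf{A1}), get the a priori degree $3^{a_1}2^{a_2+b_1+b_2+c}$ (resp.\ $2^{d_1+d_2+d_3+e}$), and then prune the $0/1$ patterns. For $B\neq 0$ the dependency you single out, \textbf{C}~$\Rightarrow$~\textbf{B1}$\,\wedge\,$\textbf{B2}, is precisely what the paper proves, but it does so by two short explicit computations rather than your subgroup bookkeeping: if \textbf{B1} fails then $\d$ and $\d'$ are both squares in $K(\sqrt{\c})$, so all four $x_i$ lie there and $\z_3\in K(\sqrt{\c})$ by the proof of Theorem~\ref{zetaversusy}; if \textbf{B2} fails then $y_1^2=u+v\sqrt{\d}$ is a square in $K(\sqrt{\d})$, hence so is its conjugate $y_2^2=u-v\sqrt{\d}$, giving $K_3=K(x_1,y_1,y_2)=K(\sqrt{\d})$ and killing \textbf{C}. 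Your normality argument is only worked out in the ``transparent'' case $\sqrt{\c}\in K$, and the promised general bookkeeping over subgroups of $\GL_2(\Z/3\Z)$ is both heavier and unnecessary: the conjugation trick replaces it entirely.

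The genuine gap is in the $B=0$ part: your third dependency, \textbf{D1}$\,\wedge\,$\textbf{D2}~$\Rightarrow$~\textbf{E}, is false, so no combination of radical identities and determinant constraints can prove it. Since $y_1^4=-\tfrac{4}{9}A^3(2\sqrt{3}+3)$ with $A\neq 0$, one always has $\sqrt{3}\in K(y_1)$; hence if $\sqrt{-1}\in K$ then $\z_3\in K(y_1)$ and \textbf{E} fails automatically, while \textbf{D1}, \textbf{D2}, \textbf{D3} may all hold. Concretely, $y^2=x^3+x$ has $d=16$ over $\Q$ (all four steps of the tower are nontrivial; for \textbf{D3} and \textbf{E} use that $N_{\Q(\sqrt{3})/\Q}(3+2\sqrt{3})=-3$ is not a rational square), so over $K=\Q(\sqrt{-1})$ one gets $[K_3:K]=8$ with \textbf{E} failing, and the degree count forces the remaining three steps to have degree $2$: the pattern in which exactly \textbf{D1}, \textbf{D2}, \textbf{D3} hold, with $d=8$, is realized --- more generally, any curve attaining the paper's top row $d=16$ does this after base change to $K(\sqrt{-1})$. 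You read the table correctly (as printed, its exhaustiveness needs your implication), but the table is in fact missing this row, and the paper's own proof only establishes \textbf{D2}~$\Rightarrow$~\textbf{D3} and not-\textbf{D2}~$\Rightarrow$~not-\textbf{E}, which do not exclude it. Be warned that even \textbf{D2}~$\Rightarrow$~\textbf{D3}, which you also plan to reprove from $y_1=\sqrt{2A/\sqrt{3}}\,\sqrt[4]{\beta}$, breaks down when $-\beta$ is a square in $K(\sqrt{3})$: over $K=\Q(\sqrt{3})$ with $A=(2-\sqrt{3})^2(2\sqrt{3}-3)$ one finds $y_1=\pm(2-\sqrt{3})^2(1-\sqrt{-1})\in K(\sqrt{\beta})$ although \textbf{D2} holds. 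So your plan cannot be completed as stated: the final ``mechanical case analysis'' would necessarily output a table strictly larger than the theorem's.
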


\begin{proof}
Everything follows from Proposition \ref{description} and the explicit description of the generators of $K_3\,$;
just note that all conditions (except {\bf A1} which provides an extension of degree 3) yield extensions of
degree 2. We remark that not all possible combinations appear in the table because there are certain relations
between the conditions. Indeed, for $B=0$, condition {\bf D2} implies condition {\bf D3}
(since $y_1=\sqrt{\frac{2A}{\sqrt{3}}}\sqrt[4]{\beta}\,$),
while, if {\bf D2} does not hold, then $x_1\in K(\sqrt{3})$ and $x_3=\sqrt{\left(\frac{2\sqrt{3}}{3}-1\right)A}\in K(\sqrt{3})$
as well. Since $x_1x_3=\frac{A\sqrt{-3}}{3}$, this implies that {\bf E} does not hold.\\
In the same way one sees that if {\bf B1} does not hold then $\d$ and $\d'$
are both squares in $K(\sqrt{\c})$. Therefore $x_i\in K(\sqrt{\c})$ for $1\leqslant i\leqslant 4$ and, by (the proof of)
Theorem \ref{zetaversusy}, $\z_3\in K(\sqrt{\c})$ as well, i.e., {\bf C} does not hold. Moreover if {\bf B2} does not hold,
then $y_1^2\,$, which is of the form $u+v\sqrt{\d}$ for some $u,v\in K(\sqrt{\c})$, is a square in $K(\sqrt{\d})$,
hence $y_2^2=u-v\sqrt{\d}$ is a square as well. In this case we have $\sqrt{\c},\sqrt{\d},y_1,y_2\in K(\sqrt{\d})$,
i.e., $K_3=K(x_1,y_1,y_2)=K(\sqrt{\d})$ (in particular {\bf C} does not hold).
\end{proof}

\subsection{Galois groups.}
We now list all possible Galois groups $\Gal(K_3/K)$ via a case by case analysis (one can easily
connect a Galois group to the conditions in Theorem \ref{classification_m=3}, so we do not write down a
summarizing statement here).

\subsubsection{$\mathbf{B= 0}$} The degree $[K_3:K]$ divides 16. Hence $\Gal(K_3/K)$ is a subgroup of the 2-Sylow subgroup
of $\GL_2(\Z/3\Z)$ which is isomorphic to $SD_8$ (the semidihedral group of order 16). If $d=16$, then
$\Gal(K_3/K)\simeq SD_8$ and, by \cite[Theorem 3.1]{BP}, it is generated by the elements
\[  \varphi_{6,1} \left\{ \begin{array}{rcl} y_1 & \mapsto & y_3 \\
\ &\ & \ \\
\sqrt{-3} & \mapsto & -\sqrt{-3} \end{array}\right. \quad{\rm and}\quad
\varphi_{2,1} \left\{ \begin{array}{rcl} y_1 & \mapsto & y_1 \\
\ &\ & \ \\
\sqrt{-3} & \mapsto & -\sqrt{-3} \end{array}\right.  \]
(here and in what follows the notations for the $\varphi_{i,j}$ are taken from \cite[Appendix A]{BP}).

\noindent Obviously if $d=2$, then $\Gal(K_3/K)\simeq \Z/2\Z$ and $d=1$ yields a trivial group. Hence we are left with
$d=4$ and 8.

\noindent \underline{If $d=8$}: then $\sqrt{3}\in K$ but $\sqrt{-3}\notin K$ which yields $i\notin K$.
Letting $\varphi$ be any element of the Galois group, one has $\varphi(y_1^2)=\pm y_1^2\,$, i.e.,
$\varphi(y_1)=\pm y_1\,,\pm iy_1\,$. Then
\[ \Gal(K_3/K) = \langle \varphi_{6,1}^2\,,\varphi_{2,1}:\varphi_{6,1}^8=\varphi_{2,1}^2=\Id\,,\,
\varphi_{2,1}\varphi_{6,1}^2\varphi_{2,1}=\varphi_{6,1}^6 \rangle \simeq D_4 \]
(the dihedral group of order 8).

\noindent \underline{If $d=4$}: then there are two cases\begin{itemize}
\item[{\bf a.}] $\sqrt{3}\notin K$, $\sqrt{\beta},\z_3\in K(\sqrt{3})$ and $y_1\notin K(\sqrt{3})$, or
\item[{\bf b.}] $\sqrt{3}\in K$, $[K(y_1):K]=4$ and $\z_3\in K(y_1)\,$.
\end{itemize}

\noindent In case {\bf a} there are elements sending $\sqrt{3}$ to $-\sqrt{3}$, hence $x_1$ to $x_3$
and $y_1^2$ to $\pm y_3^2\,$. There are no such elements of order 2, so $\Gal(K_3/K)\simeq \Z/4\Z$
and it is generated by $\varphi_{6,1}\varphi_{2,1}$ or $\varphi_{6,1}^3\varphi_{2,1}$
(note that both fix $\z_3$, hence one can also deduce that this case happens if $\z_3$ belongs to $K$
and $i$ does not).

\noindent In case {\bf b} (as in $d=8$) one has $\varphi(y_1^2)=\pm y_1^2\,$. If $\z_3\in K$, then $i\in K$ as well
and the Galois group is $\langle\,\varphi_{6,1}^2\rangle\simeq \Z/4\Z$. If $\z_3\notin K$, then the Galois group must contain
elements moving $i$ and, among them, the ones sending $y_1^2$ to $\pm y_1^2\,$. All such  elements have order 2. Therefore
$\Gal(K_3/K)\simeq \Z/2\Z\times \Z/2\Z$ and the generators are $\{\varphi_{6,1}^4,\varphi_{2,1}\}$ or
$\{\varphi_{6,1}^4,\varphi_{2,1}\varphi_{6,1}^6\}$.

\subsubsection{$\mathbf{B\neq 0}$} The degree is a divisor of 48. Looking at the subgroups of $\GL_2(\Z/3\Z)$ one sees that certain
orders do not leave any choice: indeed $d=1$, 2, 3, 12, 16, 24 and 48 give
$\Gal(K_3/K)  \simeq  \Id$, $ \Z/2\Z$, $ \Z/3\Z$, $D_6\,$, $SD_8\,$,
$\SL_2(\Z/3\Z)$ and $\GL_2(\Z/3\Z)$ respectively. The remaining orders are $d=4$, 6 and 8.

\noindent\underline{If $d=8$}:  then there are two cases \begin{itemize}
\item[{\bf a.}] $K=K(\sqrt{\c})$, $[K(y_1):K]=4$ and $K_3=K(y_1,\z_3)$, or
\item[{\bf b.}] $K=K(\sqrt[3]{\Delta})$ and $K_3=K(\sqrt{\c},y_1)$.
\end{itemize}

\noindent In case {\bf a}, since all elements of the Galois group fix $\sqrt{\c}$, one has $\varphi(\sqrt{\d})=
\pm \sqrt{\d}$, which yields $\varphi(y_1)\in \{\pm y_1\,,\pm y_2\,\}$. Therefore $\varphi$ has order 1, 2 or 4 and,
since $(\Z/2\Z)^3$ is not a subgroup of $\GL_2(\Z/3\Z)$, we have some elements of order 4 (the ones with $\varphi(y_1)=\pm y_2$).
Moreover there is $\s\in\Gal(K_3/K(y_1))$ with $\s(\z_3)=\z_3^2\,$. Note that in this case $x_1\in K(y_1)$ so
$y_2\not\in K(y_1)$ (otherwise $K_3=K(y_1)$ by Proposition \ref{description}, a contradiction to $[K_3:K]=8$), hence
$\s(y_2)=-y_2\,$. Now it is easy to check that $\Gal(K_3/K)=\langle \varphi,\,\s\,:\, \varphi^4=\s^2=\Id\,,\,
\s\varphi\s=\varphi^3\rangle\simeq D_4\,$, with
\[  \varphi \left\{ \begin{array}{rcl} y_1 & \mapsto & y_2 \\
\ &\ & \ \\
\z_3 & \mapsto & \z_3 \end{array}\right. \qquad{\rm and}\qquad
\s \left\{ \begin{array}{rcl} y_1 & \mapsto & y_1 \\
\ &\ & \ \\
\z_3 & \mapsto & \z_3^2 \end{array}\right. \ . \]

\noindent In case {\bf b}, since $\sqrt{\c}$ is no longer fixed, $\varphi(\d)\in \{\d,\d'\,\}$ and therefore
the image of $y_1$ can be any of the other $y_i$'s. Moreover, once $\varphi(\sqrt{\c})$ and $\varphi(\sqrt{\d})$ are
fixed, $\varphi(y_1)=\pm y_i$ yields $\varphi(y_i)=\pm y_1$. So, again, we have no elements of order 8 (and, as above, they
cannot all be of order 2). Since there is no privileged $y$-coordinate, all the elements with $\varphi(y_1)=y_i$ ($i\neq 1$)
have order 4 and $\Gal(K_3/K)$ is the quaternion group $Q_8$ with generators of order 4
\[  \varphi_2 \left\{ \begin{array}{rcl} y_1 & \mapsto & y_2 \\
\ &\ & \ \\
\sqrt{\c} & \mapsto & \sqrt{\c} \end{array}\right. \ ,\
\varphi_3 \left\{ \begin{array}{rcl} y_1 & \mapsto & y_3 \\
\ &\ & \ \\
\sqrt{\c} & \mapsto & -\sqrt{\c} \end{array}\right. \ {\rm and}\ \
\varphi_4 \left\{ \begin{array}{rcl} y_1 & \mapsto & y_4 \\
\ &\ & \ \\
\sqrt{\c} & \mapsto & -\sqrt{\c} \end{array}\right.  \]
\noindent and the element of order 2
\[ \varphi_1 \left\{ \begin{array}{rcl} y_1 & \mapsto & -y_1 \\
\ &\ & \ \\
\sqrt{\c} & \mapsto & \sqrt{\c} \end{array}\right.  .\]

\noindent\underline{If $d=6$}: then $K_3$ contains the cubic extension $K(\sqrt[3]{\D})$ and it must contain its Galois closure too.
Hence if $\z_3\in K$, we have $\Gal(K_3/K)\simeq \Z/3\Z\times \Z/2\Z$; otherwise
$K_3=K(\sqrt[3]{\D},\z_3)$ with $\Gal(K_3/K)\simeq S_3\,$.

\noindent\underline{If $d=4$}:  then there are three cases \begin{itemize}
\item[{\bf a.}] $K=K(\sqrt[3]{\D})$ and $K_3=K(\sqrt{\d})$, or
\item[{\bf b.}] $K=K(\sqrt[3]{\D})$ and $K_3=K(\sqrt{\c},y_1)$, or
\item[{\bf c.}] $K=K(\sqrt{\c})$ and $K_3=K(y_1)$.
\end{itemize}

\noindent In all these cases $K_3$ contains a quadratic subextension $K'$ which is either $K(\sqrt{\c})$ (cases {\bf a} and {\bf b})
or $K(\sqrt{\d})$ (case {\bf c}). If $\z_3\notin K'$ then $K_3=K'(\z_3)$ and $\Gal(K_3/K')\simeq \Z/2\Z\times \Z/2\Z$.
If $\z_3\in K'$, then $K'$ is the unique quadratic subextension, $\Gal(K_3/K)$ is isomorphic to $\Z/4\Z$ and it is generated by
\[ \varphi_{\bf a} \left\{ \begin{array}{rcl} \sqrt{\c} & \mapsto & -\sqrt{\c} \\
\ &\ & \ \\
\sqrt{\d} & \mapsto & \sqrt{\d'} \end{array}\right.  \ ,\
\varphi_{\bf b} \left\{ \begin{array}{rcl} \sqrt{\c} & \mapsto & -\sqrt{\c} \\
\ &\ & \ \\
y_1 & \mapsto & y_3 \end{array}\right. \ {\rm or}\
\varphi_{\bf c}  \left\{ \begin{array}{rcl} \sqrt{\d} & \mapsto & -\sqrt{\d} \\
\ &\ & \ \\
y_1 & \mapsto & y_2 \end{array}\right. \ . \]

\section{Fields $K(\E[4])$}\label{Secm=4}
This section focuses on the case $m=4$ (we remark that the $\gamma$ and $\delta$ here have no relation with
the same symbols appearing in Section \ref{Secm=3}). Let $K$ be a field, with ${\rm char}(K)\neq 2,3$, and let
$\E$ be an elliptic curve defined over $K$, with Weierstrass form $y^2=x^3+Ax+B$.
The roots $\alpha$, $\beta$ and $\gamma$ of $x^3+Ax+B=0$ are the $x$-coordinates of the points of order 2 of $\E$.
In particular $\alpha+\beta+\gamma=0$.
The points of exact order $4$ of $\E$ are $\pm P_1$, $\pm P_2$, $\pm P_3$, $\pm P_4$, $\pm P_5$, $\pm P_6$, where
\[ \begin{split}
&  P_1=(\alpha+\sqrt{(\alpha-\beta)(\alpha-\gamma)},(\alpha-\beta)\sqrt{\alpha-\gamma}+(\alpha-\gamma)\sqrt{\alpha-\beta}),\\
&  P_2=(\beta+\sqrt{(\beta-\alpha)(\beta-\gamma)},(\beta-\gamma)\sqrt{\beta-\alpha} +(\beta-\alpha)\sqrt{\beta-\gamma}),\\
&  P_3=(\alpha-\sqrt{(\alpha-\beta)(\alpha-\gamma)}, (\alpha-\beta)\sqrt{\alpha-\gamma}-(\alpha-\gamma)\sqrt{\alpha-\beta}),\\
&  P_4=(\beta-\sqrt{(\beta-\alpha)(\beta-\gamma)}, (\beta-\alpha)\sqrt{\beta-\gamma}-(\beta-\gamma)\sqrt{\beta-\alpha}),\\
&  P_5=\left(\gamma+\sqrt{(\alpha-\gamma)(\beta-\gamma)},
\frac{(\alpha-\gamma)(\beta-\gamma)}{\sqrt{\gamma-\alpha}}+\frac{(\alpha-\gamma)(\beta-\gamma)}{\sqrt{\gamma-\beta}}\right),\\
&  P_6=\left(\gamma-\sqrt{(\alpha-\gamma)(\beta-\gamma)},
\frac{(\alpha-\gamma)(\beta-\gamma)}{\sqrt{\gamma-\alpha}}-\frac{(\alpha-\gamma)(\beta-\gamma)}{\sqrt{\gamma-\beta}}\right).\\
\end{split} \]
We take $P_1$ and $P_2$ as basis of the $4$-torsion subgroup
of $\E$. With the explicit formulas for the coordinates of the $4$-torsion points its easy to check
that (see, for example, \cite{DZ2})
\[ K_4=K(\sqrt{-1},\sqrt{\alpha-\beta},\sqrt{\beta-\gamma}, \sqrt{\gamma-\alpha}) \ .\]
\noindent Another quick way to find this extension is by
applying Theorem \ref{zetaversusy}.

\subsection{The degree $[K_4:K]$}
By definition $K(\alpha,\beta)$ is the splitting field of $x^3+Ax+B$, i.e., the field generated by the 2-torsion points.
Hence $[K(\alpha,\beta):K]=[K_2:K]\leqslant 6$.
Then $K_4=K(\sqrt{\alpha-\beta}, \sqrt{\alpha-\gamma}, \sqrt{\beta-\gamma},\sqrt{-1})$ has degree
at most $16\cdot [K(\alpha,\beta):K]\leqslant 96$ which is, as expected, the cardinality of $\GL_2(\Z/4\Z)$.
As mentioned at the beginning of Section \ref{SecdegK3}, once we find a curve $\E$ defined over $\Q$ with
$[\Q(\E[4]):\Q]=96$ (see Proposition \ref{Propd=96} below), we know that any degree $d$ dividing 96 is obtainable
over some number field $K$.

\begin{theorem} \label{classification_m=4}
With notations as above, put $d':=[K_2:K]$ and $d:=[K_4:K]$. Consider the conditions
\[ \begin{array}{ll}
 {\bf A1.}\ \sqrt{\alpha-\beta}\notin K_2 \ ,& {\bf A3.}\ \sqrt{\beta-\gamma}\notin K_2(\sqrt{\alpha-\beta},\sqrt{\alpha-\gamma})\ , \\
 {\bf A2.}\ \sqrt{\alpha-\gamma}\notin K_2(\sqrt{\alpha-\beta})\ , & {\bf A4.}\ \sqrt{-1}\notin
K(\sqrt{\alpha-\beta},\sqrt{\alpha-\gamma},\sqrt{\beta-\gamma})\ .\end{array}\]

\noindent Then the degrees are the following
\begin{center}\begin{tabular}{|c|c|c|c|c|c|}
\hline
$d$ & $d'$ & {\em holding conditions} & $d$ & $d'$ & {\em holding conditions}  \\
\hline
{\em 96} & {\em 6} & {\bf A1}, {\bf A2}, {\bf A3}, {\bf A4} & {\em 12} & {\em 3} & 2 among {\bf A1}, {\bf A2}, {\bf A3}, {\bf A4} \\
\hline
{\em 48} & {\em 6} & 3 among {\bf A1}, {\bf A2}, {\bf A3}, {\bf A4} & {\em 8} & {\em 2} & 2 among {\bf A1}, {\bf A2}, {\bf A3}, {\bf A4} \\
\hline
{\em 48} & {\em 3} & {\bf A1}, {\bf A2}, {\bf A3}, {\bf A4} & {\em 8} & {\em 1} & 3 among {\bf A1}, {\bf A2}, {\bf A3}, {\bf A4} \\
\hline
{\em 32} & {\em 2} & {\bf A1}, {\bf A2}, {\bf A3}, {\bf A4} & {\em 6} & {\em 6} & none \\
\hline
{\em 24} & {\em 6} & 2 among {\bf A1}, {\bf A2}, {\bf A3}, {\bf A4} & {\em 6} & {\em 3} & 1 among {\bf A1}, {\bf A2}, {\bf A3}, {\bf A4} \\
\hline
{\em 24} & {\em 3} & 3 among {\bf A1}, {\bf A2}, {\bf A3}, {\bf A4} & {\em 4} & {\em 2} & 1 among {\bf A1}, {\bf A2}, {\bf A3}, {\bf A4} \\
\hline
{\em 16} & {\em 2} & 3 among {\bf A1}, {\bf A2}, {\bf A3}, {\bf A4} & {\em 4} & {\em 1} & 2 among {\bf A1}, {\bf A2}, {\bf A3}, {\bf A4} \\
\hline
{\em 16} & {\em 1} & {\bf A1}, {\bf A2}, {\bf A3}, {\bf A4} & {\em 3} & {\em 3} & none \\
\hline
{\em 12} & {\em 6} & 1 among {\bf A1}, {\bf A2}, {\bf A3}, {\bf A4} & {\em 2} & {\em 2} & none \\
\hline
 &  &  & {\em 2} & {\em 1} & 1 among {\bf A1}, {\bf A2}, {\bf A3}, {\bf A4} \\
\hline
\end{tabular}\end{center}
\end{theorem}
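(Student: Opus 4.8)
The plan is to read the degree straight off the tower of quadratic steps hidden in the generating set $K_4=K(\sqrt{-1},\sqrt{\alpha-\beta},\sqrt{\alpha-\gamma},\sqrt{\beta-\gamma})$. First I would record that this square-root field already contains $K_2$: since $\alpha+\beta+\gamma=0$ and $\mathrm{char}(K)\neq 3$, one recovers $\alpha=\tfrac13\big((\alpha-\beta)+(\alpha-\gamma)\big)$ (and similarly $\beta,\gamma$) from the differences, so $K_2=K(\alpha,\beta,\gamma)\subseteq K(\sqrt{\alpha-\beta},\sqrt{\alpha-\gamma},\sqrt{\beta-\gamma})$. In particular condition $\mathbf{A4}$, although written over $K$, is equivalent to $\sqrt{-1}\notin K_2(\sqrt{\alpha-\beta},\sqrt{\alpha-\gamma},\sqrt{\beta-\gamma})$, i.e.\ to the genuine relative condition for the last step of the tower.

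Next I would build the chain
\[ K_2 \subseteq K_2(\sqrt{\alpha-\beta}) \subseteq K_2(\sqrt{\alpha-\beta},\sqrt{\alpha-\gamma}) \subseteq K_2(\sqrt{\alpha-\beta},\sqrt{\alpha-\gamma},\sqrt{\beta-\gamma}) \subseteq K_4 \]
and observe that each inclusion has degree $1$ or $2$, being degree $2$ precisely when $\mathbf{A1}$, $\mathbf{A2}$, $\mathbf{A3}$, $\mathbf{A4}$ respectively hold --- this is exactly how the four conditions are phrased. By multiplicativity of degrees in a tower, $[K_4:K_2]=2^{k}$, where $k$ is the number of conditions among $\mathbf{A1},\dots,\mathbf{A4}$ that hold, whence
\[ d=[K_4:K]=[K_4:K_2]\,[K_2:K]=2^{k}\,d'. \]
Running over the possible values $d'\in\{1,2,3,6\}$ (those of $[K_2:K]$, since $K_2$ is the splitting field of a cubic) and $k\in\{0,1,2,3,4\}$ reproduces every row of the table, and I would finish by checking this arithmetic entry by entry.

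The hard part will be the claim that the four conditions genuinely control four \emph{independent} quadratic steps. One must notice that $(\alpha-\beta)(\alpha-\gamma)(\beta-\gamma)$ lies in $K_2$ and has square equal to the discriminant of $x^3+Ax+B$, so the three square roots of the differences satisfy a multiplicative relation modulo squares as soon as that discriminant becomes a square along the tower; a careless symmetric formulation of the conditions could therefore double-count a collapsed step. The point I would stress is that this causes no trouble here: because $\mathbf{A2}$ and $\mathbf{A3}$ are each formulated \emph{relative} to the field already built, any such dependence is automatically absorbed into a condition simply failing, so the count $k$ --- and with it the formula $d=d'\,2^{k}$ --- stays correct. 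Once this is pinned down, the remaining work, matching the formula against the tabulated pairs $(d,d')$, is purely bookkeeping.
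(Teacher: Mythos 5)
Your proposal is correct and is essentially the paper's own argument: the paper's entire proof is the remark that ``every condition provides a degree $2$ extension,'' i.e.\ precisely your tower $K_2\subseteq K_2(\sqrt{\alpha-\beta})\subseteq\cdots\subseteq K_4$ with $d=2^k d'$, and your observations that $K_2\subseteq K(\sqrt{\alpha-\beta},\sqrt{\alpha-\gamma},\sqrt{\beta-\gamma})$ (via $\alpha+\beta+\gamma=0$) and that the relative phrasing of $\mathbf{A2}$--$\mathbf{A4}$ absorbs the multiplicative relation coming from the discriminant simply make explicit what the paper leaves implicit. The only stray detail is that your formula also yields the trivial combination $d=d'=1$ with no conditions holding, which the table omits; this is bookkeeping, not a gap.
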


\begin{proof} Computations are straightforward (every condition provides a degree 2 extension).\end{proof}

\noindent We show that any degree $d$ is obtainable by providing a rather general case over $\Q$ with $d=96$.
To stay coherent with our previous notations we set $\Q(\E[4])=:\Q_4$ and $\Q(\E[2])=:\Q_2$
(not to be confused with the $2$-adic field).

\begin{proposition}\label{Propd=96}
Assume that $x^3+Ax+B\in\Q[x]$ is irreducible, that $\Delta=-16(27B^2+4A^3)$ is positive and not a square in $\Q$ and
that $\alpha$, $\beta$ and $\gamma$ are pairwise distinct real numbers. Then $[\Q_4:\Q]=96$.
\end{proposition}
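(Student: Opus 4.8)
The plan is to deduce the value of the degree from Theorem~\ref{classification_m=4}: I will show that the discriminant hypothesis forces $d'=[\Q_2:\Q]=6$ and that conditions \textbf{A1}--\textbf{A4} all hold, so the first row of the table gives $d=96$. Equivalently, writing $\Q_4=\Q_2(\sqrt{-1},\sqrt{\alpha-\beta},\sqrt{\alpha-\gamma},\sqrt{\beta-\gamma})$, the whole statement reduces to the Kummer-theoretic assertion that the four elements $-1,\ \alpha-\beta,\ \alpha-\gamma,\ \beta-\gamma$ are independent in $\Q_2^*/(\Q_2^*)^2$; for then $[\Q_4:\Q_2]=2^4=16$ and $[\Q_4:\Q]=6\cdot 16=96$. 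I would run the argument in this independence form, since it is self-contained.

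First I would settle $d'=6$. The discriminant of $x^3+Ax+B$ is $-4A^3-27B^2$, and the quantity in the statement satisfies $\Delta=16(-4A^3-27B^2)$; since $16$ is a square, $\Delta$ is a non-square in $\Q$ exactly when the polynomial discriminant is. As $\Q_2=\Q(\alpha,\beta,\gamma)$ is the splitting field of an irreducible cubic, $\Gal(\Q_2/\Q)$ is a transitive subgroup of $S_3$, and the non-square discriminant forces it to be $S_3$, so $[\Q_2:\Q]=6$. I also record two facts used repeatedly: because the roots are real, $\Q_2$ has a real embedding, whence $\sqrt{-1}\notin\Q_2$; and after relabelling so that $\alpha>\beta>\gamma$ in that embedding, the three differences $\alpha-\beta,\ \alpha-\gamma,\ \beta-\gamma$ are positive. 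Since $[\Q_4:\Q]$ does not depend on the labelling, this normalisation is harmless.

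Next, the independence. Any product $(-1)^{e_0}(\alpha-\beta)^{e_1}(\alpha-\gamma)^{e_2}(\beta-\gamma)^{e_3}$ with $e_i\in\{0,1\}$ that contains the factor $-1$ is a negative real number, the three differences being positive, hence not a square in the real field $\Q_2$; this kills all eight relations involving $-1$ at once (in particular it gives \textbf{A4}). For the seven relations among the differences alone I would use that $S_3=\Gal(\Q_2/\Q)$ permutes $\{\alpha-\beta,\alpha-\gamma,\beta-\gamma\}$ up to sign. If one difference were a square, applying a $3$-cycle and a transposition makes all three squares, hence their product; and if a product of two were a square, transporting it by the $3$-cycle produces, for some $\delta$, both $\delta$ and $-\delta$ as squares, forcing $-1\in(\Q_2^*)^2$, a contradiction. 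Thus all of \textbf{A1}--\textbf{A3} reduce to the single triple product $(\alpha-\beta)(\alpha-\gamma)(\beta-\gamma)$, which is the Vandermonde $\pm\sqrt{\mathrm{disc}}\in\Q_2\setminus\Q$.

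The crux, and the step I expect to be the main obstacle, is excluding that this triple product is a square in $\Q_2$ (the positivity trick is useless here, since the product is a positive real). I would exploit that a transposition $\tau\in\Gal(\Q_2/\Q)$ negates the Vandermonde: if $(\alpha-\beta)(\alpha-\gamma)(\beta-\gamma)=s^2$ with $s\in\Q_2$, then $\tau(s)^2=\tau(s^2)=-s^2$, so $(\tau(s)/s)^2=-1$ with $\tau(s)/s\in\Q_2$, again contradicting $\sqrt{-1}\notin\Q_2$. With all fifteen nontrivial relations excluded, the four elements are independent modulo squares, giving $[\Q_4:\Q_2]=16$ and $[\Q_4:\Q]=96$. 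The delicate point is that the reality of $\Q_2$, i.e. the hypothesis $\Delta>0$, is used in two genuinely different ways: by positivity to eliminate the $-1$-relations, and through $\tau(s)/s=\pm i$ to eliminate the Vandermonde relation. Hence $\Delta>0$ cannot be dropped.
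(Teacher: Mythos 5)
Your proof is correct, and it takes a genuinely different route from the paper's. The paper argues computationally inside $\Q(\alpha)$ and $\Q_2$: it writes the other two roots as $\frac{-\alpha\pm\sqrt{\delta}}{2}$ with $\delta=-3\alpha^2-4A$, re-expresses the generators as $\sqrt{(3\alpha\pm\sqrt{\delta})/2}$ and $\sqrt[4]{\delta}$, and excludes each possible quadratic dependence by solving equations of the form $u=(a+b\sqrt{\delta})^2$ and applying the norm $N:\Q(\alpha)\to\Q$; there the non-square hypothesis enters through $N(3\alpha^2+A)=27B^2+4A^3$, and reality enters through sign arguments such as $(3\alpha^2+4A)(3\alpha^2+A)<0$ and $N\bigl(-\tfrac{3\alpha^2+A}{3\alpha^2+4A}\bigr)=-1$. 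You instead recast the statement as Kummer independence of $-1,\ \alpha-\beta,\ \alpha-\gamma,\ \beta-\gamma$ in $\Q_2^*/(\Q_2^*)^2$ and dispose of all fifteen potential relations structurally: positivity in the real field $\Q_2$ kills the eight relations involving $-1$ (in particular condition \textbf{A4} of Theorem \ref{classification_m=4}); the $S_3$-action on the signed differences — available precisely because the discriminant is not a square, which you correctly identify with the hypothesis on $\Delta$ since $16$ is a square — transports singleton and pair relations either to a negative square or to $-1\in(\Q_2^*)^2$; and the transposition negating the Vandermonde, giving $(\tau(s)/s)^2=-1$, handles the one relation that positivity cannot see. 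Your argument is shorter, basis-symmetric, and makes transparent exactly where each hypothesis is used (you rightly flag that $\Delta>0$ is used twice, in two different ways); the paper's computation, by contrast, buys explicit data — concrete generators like $\sqrt[4]{\delta}$ and the intermediate quadratic $\Q_2(\sqrt{3\alpha^2+A})$ — that a pure Kummer-independence argument does not exhibit. Two trivial points to record in a final write-up: $s\neq 0$ in the Vandermonde step (the roots are pairwise distinct), and the singleton case also falls immediately because the $S_3$-orbit of $\alpha-\beta$ contains the negative element $\beta-\alpha$, so the detour through the triple product, while valid, is not even needed there.
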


\begin{proof}
Put $\delta=-3\alpha^2-4A$ and note that, once $\alpha$ is fixed the other two roots are
$\ds{\frac{-\alpha \pm \sqrt{\delta}}{2}}$. By renaming the three roots (if necessary), we may assume that $\alpha>\beta>\gamma$, so that
all the generators except $\sqrt{-1}$ are real and
\begin{equation}\label{d=96Eq1}
\begin{array}{rl} [\Q_4:\Q] & = 2[\Q(\sqrt{\alpha-\beta},\sqrt{\alpha-\gamma},\sqrt{\beta-\gamma}):\Q]\\
& \\
\ & = 2[\Q\left(\sqrt{\ds{\frac{3\alpha+\sqrt{\delta}}{2}}},\sqrt{\ds{\frac{3\alpha-\sqrt{\delta}}{2}}},\sqrt[4]{\delta}\right):\Q]\ .
\end{array}
\end{equation}
By the choice of $\alpha$,
we have that $A<0$ and  the polynomial $x^3+Ax+B$ has a minimum in $x=\sqrt{\ds{-\frac{A}{3}}}$.  Hence
$\alpha>\sqrt{\ds{-\frac{A}{3}}} \hspace{0.3cm} \textrm{ and in particular }  \hspace{0.3cm} 3\alpha^2+A>0.$

\noindent By the hypotheses, we have that $[\Q_2:\Q]=[\Q(\alpha,\sqrt{\delta}):\Q]=6$ and $\delta>0$ is not a square in $\Q(\alpha)$.
Obviously $[\Q_2(\sqrt[4]{\delta}):\Q_2]=2$; moreover $\ds{\frac{3\alpha+\sqrt{\delta}}{2}}$ is a square in $\Q_2$ if and only
if $\ds{\frac{3\alpha-\sqrt{\delta}}{2}}$ has the same property. Assume $\ds{\frac{3\alpha+\sqrt{\delta}}{2}}\in (\Q_2^*)^2\,$, i.e.,
$\ds{\frac{3\alpha+\sqrt{\delta}}{2}}=(a+b\sqrt{\delta})^2$, for some $a,b\in \Q_2\,$. Then
\[\left\{ \begin{array}{l} a^2+b^2\delta=\ds{\frac{3\alpha}{2}} \\
2ab=\ds{\frac{1}{2}} \end{array}\right. \Longrightarrow \left\{ \begin{array}{l} a^2+\ds{\frac{\delta}{16a^2}}=\ds{\frac{3\alpha}{2}} \\
b=\ds{\frac{1}{4a}} \end{array}\right.\ , \]
leading to
\[ a^2 = \frac{12\alpha\pm\sqrt{144\alpha^2-16\delta}}{16}=\frac{3\alpha\pm\sqrt{9\alpha^2-\delta}}{4}\in \Q(\alpha)\ .\]
Hence $9\alpha^2-\delta=12\alpha^2+4A$ must be a square in $\Q(\alpha)$, i.e., $3\alpha^2+A\in(\Q(\alpha)^*)^2\,$.
Let $N$ denote the norm map from $\Q(\alpha)$ to $\Q$. Then $N(3\alpha^2+A)=27B^2+4A^3$ is not a square in $\Q$ by hypothesis
and this contradicts $3\alpha^2+A\in(\Q(\alpha)^*)^2\,$. Therefore
\[ [\Q_2\left(\ds{\sqrt{\frac{3\alpha+\sqrt{\delta}}{2}}}\right):\Q_2]=
[\Q_2\left(\ds{\sqrt{\frac{3\alpha-\sqrt{\delta}}{2}}}\right):\Q_2]=2 \ \]
and we have to prove that the three quadratic extensions of $\Q_2$ we found are independent.

\noindent The elements $\sqrt{\ds{\frac{3\alpha+\sqrt{\delta}}{2}}}$ and $\sqrt{\ds{\frac{3\alpha-\sqrt{\delta}}{2}}}$ generate the same
quadratic extension over $\Q_2$ if and only if
\[ \ds{\frac{3\alpha+\sqrt{\delta}}{2}}\,\cdot\,\ds{\frac{2}{3\alpha-\sqrt{\delta}}} = \frac{9\alpha^2-\delta}{(3\alpha-\sqrt{\delta})^2}
\in (\Q_2^*)^2 \ ,\]
i.e., if and only if $3\alpha^2+A\in (\Q_2^*)^2\,$. We have already seen that $3\alpha^2+A\not\in (\Q(\alpha)^*)^2$, so we must have
$3\alpha^2+A=(a+b\sqrt{\delta})^2$ with $a,b\in\Q(\alpha)$ and $b\neq 0$. A little computation gives
\[ b^2=-\frac{3\alpha^2+A}{3\alpha^2+4A}  \in (\Q(\alpha)^*)^2 \ ,\]
but
\[ N\left(-\frac{3\alpha^2+A}{3\alpha^2+4A}\right) = -1 \not\in (\Q^*)^2 \]
and this is a contradiction. Hence
\[ [\Q_2\left( \sqrt{\ds{\frac{3\alpha+\sqrt{\delta}}{2}}},\sqrt{\ds{\frac{3\alpha-\sqrt{\delta}}{2}}} \right):\Q_2]=4\ .\]
Now $\sqrt[4]{\delta}$ and $\sqrt{\ds{\frac{3\alpha\pm\sqrt{\delta}}{2}}}$ generate the same quadratic extension of $\Q_2$ if and
only if
\[ \ds{\frac{3\alpha\pm\sqrt{\delta}}{2}}\,\cdot\,\frac{1}{\sqrt{\delta}} = \frac{6\alpha\sqrt{\delta}\pm 2\delta}{4\delta}
\in (\Q_2^*)^2 \,\]
i.e., if and only if $  6\alpha\sqrt{\delta}\pm 2\delta = (a+b\sqrt{\delta})^2$ for some $a,b\in\Q(\alpha)$.
This leads to\begin{itemize}
\item[{\bf 1.}] $a^2+b^2\delta=2\delta$ and $2ab=6\alpha$: solving for $a$ we get
\[ a^2= \delta\pm\sqrt{\delta^2-9\alpha^2\delta} \in \Q(\alpha) \ .\]
Hence
\[ \delta^2-9\alpha^2\delta = (-3\alpha^2-4A)(-12\alpha^2-4A) \in (\Q(\alpha)^*)^2 \,\]
i.e., $(3\alpha^2+4A)(3\alpha^2+A) \in (\Q(\alpha)^*)^2$. But by hypothesis $3\alpha^2+4A=-\delta<0$ and we recall that
$3\alpha^2+A>0$; thus $(3\alpha^2+4A)(3\alpha^2+A)<0$ cannot be a square in the real field $\Q(\alpha)$.
\item[{\bf 2.}] $a^2+b^2\delta=-2\delta$ and $2ab=6\alpha$: this is impossible because $a^2+b^2\delta >0$, while
$-2\delta<0$.
\end{itemize}
Then
\[ [\Q_2\left(\sqrt[4]{\delta},\sqrt{\frac{3\alpha+\sqrt{\delta}}{2}}\right):\Q_2]=
[\Q_2\left(\sqrt[4]{\delta},\sqrt{\frac{3\alpha-\sqrt{\delta}}{2}}\right):\Q_2] = 4\ .\]
With similar computations one checks that the extension generated by $\sqrt[4]{\delta}$ is also independent
from the third quadratic extension contained in
$\Q_2\left(\sqrt{\frac{3\alpha+\sqrt{\delta}}{2}},\sqrt{\frac{3\alpha-\sqrt{\delta}}{2}}\right)$, which is
$\Q_2(\sqrt{3\alpha^2+A})$. Hence
\[ [ \Q_2\left(\sqrt{\frac{3\alpha+\sqrt{\delta}}{2}},\sqrt{\frac{3\alpha-\sqrt{\delta}}{2}},\sqrt[4]{\delta}\right):
\Q]=48 \]
and, by \eqref{d=96Eq1}, we have $[\Q_4:\Q]=96$.
\end{proof}

\noindent With reducible polynomials $x^3+Ax+B$ we can easily obtain examples of smaller degrees, in particular
when $A=0$ or $B=0$ (obviously, since $\sqrt{-1}\in \Q_4$, we cannot obtain extension of degree 1 or 3 over $\Q$).

\begin{example}
The curve
\[ y^2=x^3-\frac{481}{3}x+\frac{9658}{27}=\left(x-\frac{34}{3}\right)\left(x-\frac{7}{3}\right)\left(x+\frac{41}{3}\right) \]
provides $\sqrt{\alpha-\beta}=3$, $\sqrt{\alpha-\gamma}=5$ and $\sqrt{\beta-\gamma}=4$. Then
$\Q_4=\Q(\sqrt{-1})$ has degree 2 over $\Q$.\\
The curve
\[ y^2=x^3-22x-15=(x-5)(x^2+5x+3) \]
yields
\[ \Q_2=\Q(\sqrt{13})\quad{\rm and}\quad
\Q_4=\Q\left(\sqrt{\frac{5+\sqrt{13}}{2}},\sqrt{\frac{5-\sqrt{13}}{2}},\sqrt[4]{5},\sqrt{-1}\right) \]
which has degree 32 over $\Q$.
\end{example}

\begin{proposition}\label{A=0B=0}
If $A=0$, then $\Q_4=\Q(\z_{12},\sqrt{\sqrt[3]{B}(1-\z_3)})$ and
\[ [\Q_4:\Q]=\left\{\begin{array}{ll} 8 & {\rm if}\ B\in (\Q^*)^3\ ,\\
24 & {\rm otherwise} \ .\end{array} \right. \]
If $B=0$, then $\Q_4=\Q(\sqrt{2},\sqrt{-1},\sqrt[4]{-A})$ and
\[ [\Q_4:\Q]=\left\{\begin{array}{ll} 16 & {\rm if}\ A\neq \pm 2a^2, \pm a^2\ {\rm with}\ a\in\Q \ ,\\
8 & {\rm if}\ A=\pm 2a^2 \ {\rm with}\ a\in\Q \ ,\\
4 & {\rm if}\ A= a^4, \pm 4a^4 \ {\rm with}\ a\in\Q \ ,\\
8 & {\rm otherwise} \ .\end{array} \right. \]
\end{proposition}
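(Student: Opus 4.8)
The plan is to specialize the general description $K_4=K(\sqrt{-1},\sqrt{\alpha-\beta},\sqrt{\beta-\gamma},\sqrt{\gamma-\alpha})$ established at the start of Section~\ref{Secm=4} to $K=\Q$ and to the two degenerate shapes of $x^3+Ax+B$. In each case the three roots are completely explicit, so the three ``difference'' square roots can be compared directly, and the whole problem reduces to computing the degree of an elementary Kummer-type extension of a cyclotomic field. I would handle those degrees by tower arguments together with precise square/fourth-power membership criteria over $\Q(\z_{12})$ and $\Q(\z_8)$.

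For $A=0$ the cubic is $x^3+B$, with roots $-\sqrt[3]{B},-\z_3\sqrt[3]{B},-\z_3^2\sqrt[3]{B}$. Using $1+\z_3+\z_3^2=0$ one checks $\alpha-\beta=-\sqrt[3]{B}(1-\z_3)$, $\beta-\gamma=\z_3(\alpha-\beta)$ and $\gamma-\alpha=\z_3^2(\alpha-\beta)$; since $\sqrt{\z_3}=\z_{12}^2$ and $\sqrt{-1}$ lie in $\Q(\z_{12})$, all three square roots differ from $\sqrt{\sqrt[3]{B}(1-\z_3)}$ only by elements of $\Q(\z_{12})$, which yields the stated generator $\Q_4=\Q(\z_{12},\sqrt{\sqrt[3]{B}(1-\z_3)})$. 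For the degree I would record $[\Q(\z_{12}):\Q]=4$ and the identity $1-\z_3=\z_3^2\sqrt{-3}$, together with the key input that $c\sqrt{-3}$ is never a square in $\Q(\z_{12})=\Q(i,\sqrt{-3})$ for $c\in\Q^*$ (writing a putative square as $x+y\sqrt{-3}$ with $x,y\in\Q(i)$ forces $(x/y)^2=3$, impossible). When $B\in(\Q^*)^3$ this shows $\sqrt[3]{B}(1-\z_3)=\sqrt[3]{B}\,\z_3^2\sqrt{-3}$ is not a square in $\Q(\z_{12})$, so the degree is $4\cdot 2=8$. When $B\notin(\Q^*)^3$ we have $[\Q(\z_{12},\sqrt[3]{B}):\Q]=12$ (as $3\nmid 4$), and since $\sqrt[3]{B}=u^2/(1-\z_3)$ for $u:=\sqrt{\sqrt[3]{B}(1-\z_3)}$, the field $\Q_4$ contains $\Q(\z_{12},\sqrt[3]{B})$; the norm down to $\Q(\z_{12})$ gives $N(u^2)=B(1-\z_3)^3=-3B\sqrt{-3}$, again of the forbidden form $c\sqrt{-3}$, so $u$ contributes a further quadratic step and the degree is $12\cdot 2=24$.

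For $B=0$ the cubic is $x(x^2+A)$, with roots $0,\pm\sqrt{-A}$; the differences are $\pm\sqrt{-A}$ and $2\sqrt{-A}$, whose square roots are $i\sqrt[4]{-A}$ and $\sqrt{2}\,\sqrt[4]{-A}$, and absorbing $i$ gives $\Q_4=\Q(\sqrt{2},\sqrt{-1},\sqrt[4]{-A})$. Since $\Q(\sqrt2,i)=\Q(\z_8)$ has degree $4$, I would write $[\Q_4:\Q]=4\cdot[\Q(\z_8,\sqrt[4]{-A}):\Q(\z_8)]$ and compute the inner degree as the order of $-A$ in $\Q(\z_8)^*/(\Q(\z_8)^*)^4$ by Kummer theory. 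Because $-1=\z_8^4$ is already a fourth power, that order divides $2$ exactly when $-A$ is a square in $\Q(\z_8)$; the only quadratic subfields of $\Q(\z_8)$ being $\Q(i),\Q(\sqrt2),\Q(\sqrt{-2})$, a rational is a square there iff it is $\pm a^2$ or $\pm 2a^2$, which isolates the degree-$16$ case $A\neq \pm a^2,\pm 2a^2$. The remaining distinction (order $1$ versus $2$) rests on the lemma that the rationals which are fourth powers in $\Q(\z_8)$ are exactly $\{\pm a^4,\pm 4a^4\}$, proved from $-1=\z_8^4$, $4=(\sqrt2)^4$, $-4=(1+i)^4$ and $\sqrt[4]{\pm 2}\notin\Q(\z_8)$; matching this against the square condition gives degree $4$ on $A\in\{\pm a^4,\pm 4a^4\}$ and degree $8$ for the other squares and for $A=\pm 2a^2$.

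The main obstacle is precisely this bookkeeping of squares and especially fourth powers of rationals inside $\Q(\z_8)$, together with the companion ``$c\sqrt{-3}$ is not a square'' input for $\Q(\z_{12})$: the entire degree table is governed by these membership questions. The delicate feature is that $-1$ (hence also $-4$) is already a fourth power in $\Q(\z_8)$, so the degree-$4$ locus comes out as the symmetric set $\{\pm a^4,\pm 4a^4\}$ rather than only the positive fourth powers, and I would check the $A=-a^4$ boundary explicitly (e.g.\ $A=-1$ gives $\Q_4=\Q(\z_8)$) against the stated cases. Once these membership lemmas are in place, the rest is routine tower arithmetic.
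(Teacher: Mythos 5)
Your argument is correct and, at bottom, follows the same route as the paper: specialize the generators $\sqrt{\alpha-\beta},\sqrt{\beta-\gamma},\sqrt{\gamma-\alpha},\sqrt{-1}$ to the explicit roots of the degenerate cubic, reduce to $\Q_4=\Q(\z_{12},\sqrt{\sqrt[3]{B}(1-\z_3)})$ resp.\ $\Q_4=\Q(\z_8,\sqrt[4]{-A})$, and settle the degrees by square/fourth-power membership over the cyclotomic field. Where you differ is in rigor rather than strategy: the paper's proof dismisses the degree claims with ``the first statement follows'' and ``the remaining cases are straightforward,'' whereas you supply the actual inputs --- the lemma that $c\sqrt{-3}$ ($c\in\Q^*$) is never a square in $\Q(\z_{12})$, used both directly (for $B\in(\Q^*)^3$) and through the norm from $\Q(\z_{12},\sqrt[3]{B})$ down to $\Q(\z_{12})$ (to show $N(u^2)=B(1-\z_3)^3=-3B\sqrt{-3}$ obstructs the last quadratic degeneration when $B\notin(\Q^*)^3$), and the Kummer-theoretic computation of the order of $-A$ in $\Q(\z_8)^*/(\Q(\z_8)^*)^4$, with the classification of rational fourth powers in $\Q(\z_8)$ as $\{\pm a^4,\pm 4a^4\}$. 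One micro-step worth making explicit: in the $B\in(\Q^*)^3$ case your lemma applies to $\sqrt[3]{B}\,\z_3^2\sqrt{-3}$ because $\z_3^2=(\z_3)^2$ is a square, so the class modulo squares is indeed of the form $c\sqrt{-3}$ with $c\in\Q^*$.

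More importantly, your bookkeeping detects a genuine defect in the statement as printed. Since $-1=\z_8^4$ (and $-4=(1+i)^4$, $4=(\sqrt{2})^4$), the degree-$4$ locus is the sign-symmetric set $A\in\{\pm a^4,\pm 4a^4\}$, while the proposition lists only $A=a^4,\pm 4a^4$ and relegates $A=-a^4$ to the final ``otherwise $8$'' row. Your boundary check is decisive: for $A=-1$ (the curve $y^2=x^3-x$) one has $\sqrt[4]{-A}=1$, hence $\Q_4=\Q(\sqrt{2},\sqrt{-1})=\Q(\z_8)$ of degree $4$ --- concretely, $(i,-1+i)$ and $(1+\sqrt{2},2+\sqrt{2})$ are points of order $4$ --- and the same happens for every $A=-a^4$ with $a\in\Q^*$ (e.g.\ $A=-16$, where $-A=2^4$). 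Note that $-a^4$ is never of the form $b^4$, $\pm 4b^4$ or $\pm 2b^2$ with $b\in\Q$, so no overlap with the other rows rescues the printed table: it genuinely assigns degree $8$ where the degree is $4$. Your version of the $B=0$ table is the correct one, and the paper's terse ``the remaining cases are straightforward'' is precisely where the omission slipped in.
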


\begin{proof}
For $A=0$ just take $\alpha=\sqrt[3]{B}$, $\beta=\z_3\sqrt[3]{B}$ and $\gamma=\z_3^2\sqrt[3]{B}$ to get
\[ \Q_4=\Q\left(\z_3,\sqrt{-1},\sqrt{\sqrt[3]{B}(1-\z_3)},\sqrt{\sqrt[3]{B}(1-\z_3^2)},
\sqrt{\sqrt[3]{B}(\z_3-\z_3^2)}\right) \ .\]
Obviously $\Q(\z_3,\sqrt{-1})=\Q(\z_{12})$, moreover, the elements $\sqrt{\sqrt[3]{B}(1-\z_3)}$,
$\sqrt{\sqrt[3]{B}(1-\z_3^2)}$ and $\sqrt{\sqrt[3]{B}(\z_3-\z_3^2)}$ generate the same extension
of $\Q(\z_{12})$. Therefore
\[ \Q_4=\Q\left(\z_{12},\sqrt{\sqrt[3]{B}(1-\z_3)}\right) \ \]
and the first statement follows.

\noindent For $B=0$ let $\alpha=0$, $\beta=\sqrt{-A}$ and $\gamma=-\beta$ to get
$\Q_4=\Q(\sqrt[4]{-A},\sqrt{2},\sqrt{-1})$.
The unique quadratic subfield of $\Q(\sqrt[4]{-A})$ is $\Q(\sqrt{-A})$, hence, if
$\Q(\sqrt{-A})\neq \Q(\sqrt{\pm 2})$, $\Q(\sqrt{-1})$, $\Q$, i.e., if $A\neq \pm 2a^2, \pm a^2$
for some $a\in \Q$, we have $[\Q_4:\Q]=16$. The remaining cases are straightforward.
\end{proof}

\subsection{Galois groups}
One can find descriptions for $\GL_2(\Z/4\Z)$ in \cite[Section 5.1]{Ad} or \cite[Section 3]{Ho}: the most suitable for our
goals is the exact sequence coming from the canonical projection $\GL_2(\Z/4\Z) \rightarrow \GL_2(\Z/2\Z)$, whose kernel
we denote by $H^4_2\,$. Obviously
\[ H^4_2 = \left\{ \left( \begin{array}{cc} 1+2a & 2b \\ 2c & 1+2d \end{array} \right)\in \GL_2(\Z/4\Z)\right\} \]
and it is easy to check that it is an abelian group of order 16 and exponent 2, i.e., isomorphic to $(\Z/2\Z)^4\,$.
 By sending the row $(1\ 1)$ to $(3\ 3)$ and leaving rows $(1\ 0)$ and $(0\ 1)$ fixed, we see that there exists a section\,
$\GL_2(\Z/2\Z) \rightarrow \GL_2(\Z/4\Z)$ which splits the sequence
\[ H^4_2 \hookrightarrow \GL_2(\Z/4\Z) \twoheadrightarrow \GL_2(\Z/2\Z) \]
as a semi-direct product. For any $K$, we have a commutative diagram
\[ \xymatrix{ H^4_2 \ar@{^(->}[r] \ar@{->>}[d] & \GL_2(\Z/4\Z) \ar@{->>}[r] \ar@{->>}[d] & \GL_2(\Z/2\Z) \ar@{->>}[d] \\
\Gal(K_4/K_2) \ar@{^(->}[r] & \Gal(K_4/K) \ar@{->>}[r] & \Gal(K_2/K) \ .} \]
The structure of $\Gal(K_4/K)$ can be derived from the lower sequence (which splits as well), checking the conditions
of Theorem \ref{classification_m=4} to compute $d'$ (which identifies $\Gal(K_2/K)$ as one among $\Id$, $\Z/2\Z$,
$\Z/3\Z$ or $S_3\,$) and the $i\in\{0,\dots,4\}$ for which $\Gal(K_4/K_2)\simeq (\Z/2\Z)^i\,$.


\begin{thebibliography}{99}

\bibitem{Ad} {\sc C. Adelmann}, \newblock {\em The decomposition of primes in torsion point fields},
\newblock Lecture Notes in Mathematics {\bf 1761}, Springer-Verlag, Berlin, 2001.

%\bibitem{Ar} {\sc K. Arai}, \newblock On the Galois images associated to QM-abelian surfaces,
%Proceedings of the Symposium on Algebraic Number Theory and Related Topics, 165--187,
%RIMS K\^oky\^uroku Bessatsu, {\bf B4}, Kyoto, 2007.

%\bibitem{Ar2} {\sc K. Arai}, \newblock Galois images and modular curves,
%{\em RIMS K\^oky\^uroku Bessatsu}, {\bf B32} (2012), 145--161.

\bibitem{Ba} {\sc A. Bandini}, \newblock Three-descent and the Birch and Swinnerton-Dyer conjecture,
\newblock {\em Rocky Mount. J. of Math.} {\bf 34} (2004), 13--27.

\bibitem{Ba2} {\sc A. Bandini}, \newblock $3$-Selmer groups for curves $y^2=x^3+a$,
\newblock {\em Czechoslovak Math. J.} {\bf 58} (2008), 429--445.

\bibitem{BP} {\sc A. Bandini and L. Paladino}, \newblock Number fields generated by the $3$-torsion poins of an elliptic curve,
\newblock {\em  Monatsh. Math.} {\bf 168}, no. 2 (2012), 157--181.

%\bibitem{Bu} {\sc K. Buzzard} \newblock Integral models of certain Shimura curves,
%{\em Duke Math. J.}  {\bf 87}  no. 3 (1997), 591--612.

\bibitem{DZ1} {\sc R. Dvornicich and U. Zannier}, \newblock Local-global divisibility of rational points
in some commutative algebraic groups,
\newblock {\em Bull. Soc. Math. France} {\bf 129}, no. 3 (2001), 317--338.

\bibitem{DZ2} {\sc R. Dvornicich R. and U. Zannier}, \newblock An analogue for elliptic curves of the Grunwald-Wang example,
\newblock {\em C. R. Acad. Sci. Paris, Ser. I} {\bf 338} (2004), 47--50.

\bibitem{Ho} {\sc C. Holden}, \newblock Mod 4 Galois representations and elliptic curves,
\newblock {\em Proc. Amer. Math. Soc.} {\bf 136}, no. 1  (2008), 31--39.

%\bibitem{Jo} {\sc B. W. Jordan}, \newblock Points on Shimura curves rational over number fields,
%{\em J. Reine Angew. Math.} {\bf 371} (1986), 92--114.

\bibitem{KM} {\sc N.M. Katz - B. Mazur}, \newblock {\em Arithmetic moduli of elliptic curves},
\newblock Annals of Math. Studies {\bf 108}, Princeton Univ. Press, Princeton, 1985.

\bibitem{LV} {\sc E. Larson and D. Vaintrob},  \newblock On the surjectivity of
Galois representations associated to elliptic curves over number fields,
\newblock {\em Bull. Lond. Math. Soc.} \textbf{46}, no. 1 (2014), 197--209.

%\bibitem{Mer} {\sc L. Merel}, \newblock Bornes pour la torsion des courbes elliptiques sur les corps de nombres,
%\newblock {\em Invent. Math.} {\bf 124} (1996), 437--449.

\bibitem{Pal}  {\sc L. Paladino}, \newblock Local-global divisibility by $4$ in elliptic curves defined over $\Q$,
\newblock {\em Annali di Matematica Pura e Applicata} {\bf 189}, no. 1 (2010), 17-23.

\bibitem{Pal2} {\sc L. Paladino},  \newblock Elliptic curves with $\Q({\mathcal{E}}[3])=\Q(\zeta_3)$ and counterexamples
to local-global divisibility by 9,
\newblock {\em J. Th\'{e}or. Nombres Bordeaux} {\bf 22} (2010), no. 1, 138--160.

\bibitem{Re} {\sc J. Reynolds}, \newblock On the pre-image of a point under an isogeny and Siegel's theorem,
\newblock {\em New York J. Math.} {\bf 17} (2011), 163--172.

\bibitem{SS} {\sc E.F. Schaefer and M. Stoll}, \newblock How to do a $p$-descent on an elliptic curve,
\newblock {\em Trans. Amer. Math. Soc.} {\bf 356} (2004), 1209--1231.

\bibitem{Ser} {\sc J.-P. Serre}, \newblock Propriet\'es Galoisiennes des points d'ordre fini des courbes elliptiques,
\newblock {\em Invent. Math.} {\bf 15} (1972), 259--331.

\bibitem{Ser2} {\sc J.-P. Serre}, \newblock Quelques applications du th\'{e}or\`{e}m de densit\'{e} de Chebotarev,
\newblock {\em Ist. Hautes \'{E}tudes Sci. Publ. Math.} {\bf 54} (1981), 323--401.

\bibitem{Sh} {\sc G. Shimura}, \newblock {\em Introduction to the arithmetic theory of automorphic functions},
\newblock Princeton Univ. Press, Princeton, 1971.

\bibitem{Sil} {\sc J.H. Silverman},\newblock {\em The arithmetic of elliptic curves, $2$-nd edition},
\newblock {\bf GTM 106} Springer-Verlag, New York, 2009.

\bibitem{Sil2} {\sc J.H. Silverman},\newblock {\em Advanced topics in the arithmetic of elliptic curves},
\newblock {\bf GTM 151} Springer-Verlag, New York, 1994.


\end{thebibliography}
\end{document}